\documentclass[twocolumn]{autart}    

\usepackage{graphicx}          

\usepackage{amsmath}
\usepackage{amssymb}
\usepackage{bbold}
\usepackage{graphicx}   
\usepackage{subcaption}  

\usepackage{color}
\usepackage{appendix}
\usepackage{comment} 

\usepackage{algorithm}
\usepackage{algpseudocode}

\begin{document}

\begin{frontmatter}

\title{Positive stabilization of a pure diffusion system\thanksref{footnoteinfo}} 

\thanks[footnoteinfo]{Corresponding author V.~Piengeon. Tel. +3281724940.}

\author[UNamur]{Violaine Piengeon}\ead{violaine.piengeon@unamur.be},    
\author[UNamur]{Joseph J. Winkin}\ead{joseph.winkin@unamur.be}               
                                                                   
\address[UNamur]{University of Namur, Department of Mathematics \& naXys institute, Rue de Bruxelles 61, B-5000 Namur, Belgium} 

\begin{keyword}  
    Positive systems, positive stabilization, positive state estimation, observer-based positive stabilization, optimal design, diffusion system, boundary control, point observation, finite differences
\end{keyword}

\begin{abstract}                          
This paper provides a complete analytical study of the positive stabilization, positive state estimation, and observer-based positive stabilization problems for a generic boundary control diffusion system with point observation. In particular, optimal design problems are solved analytically, and an original strategy for observer-based feedback stabilization is proposed. The three problems are also investigated for an arbitrary-order spatial discretization of the nominal PDE system enabling numerical implementation together with rigorous comparisons with the nominal solutions. In both the continuous and discretized settings, explicit and readily implementable solutions are derived.
\end{abstract}

\end{frontmatter}

\section{Introduction}
Positive linear systems constitute a class of dynamical systems whose state trajectories remain in the nonnegative cone of the state space whenever the initial condition and the input trajectory are nonnegative. This positivity property naturally arises when the state variables correspond to physical quantities such as concentrations, population densities, masses, or temperatures (in Kelvin), for which negative values have no physical interpretation. Because of this intrinsic constraint, positive linear systems have attracted considerable attention over the past decades and are present in numerous applications in biology, chemical and biochemical engineering, epidemiology, ecology, economics, or networked systems. The theory of positive finite-dimensional systems is now well established; see e.g. the books \cite{Farina2000}, \cite{Haddad2010}, \cite{Berman1989}, \cite{Berman1994}. Since many applications (diffusion processes, for example) are more accurately described by infinite-dimensional models, a theory of positive systems has also been developed specifically for infinite-dimensional systems; see e.g. \cite{Smith1995} for competitive and cooperative systems, \cite{Laabissi2003}, \cite{Laabissi2001} for chemical and biochemical systems, and \cite{Wintermayr2019} for systems with unbounded observation and control operators. The book  \cite{Batkai2017} includes connections with the positivity of finite-dimensional dynamical systems, via the concept of positive semigroup. The latter concept is thoroughly studied in the classical book \cite{Arendt1986}.

When studying stabilization and/or state estimation problems, positivity is an important and challenging additional constraint to take into account. Indeed, classical state feedback or observer design techniques (see e.g. \cite{Sontag2013}, \cite{Curtain_Zwart2020} or \cite{Yupanqui2021}) may produce controls or observers that violate positivity, thereby leading to physically meaningless results. Consequently, stabilization and state estimation methods must be specifically developed so that positivity is preserved while achieving the desired dynamical performance. More precisely, specifically for monitoring purposes, state observers should ideally be positive, just like closed-loop systems for stabilization.

The literature on stabilization and state estimation for positive linear finite-dimensional systems is well developed; see e.g. \cite{Rantzer2021} and the numerous references therein, or \cite{Rami2011} for observers and \cite{Beauthier2011} for optimal control. 
By contrast, for positive infinite-dimensional systems, that include diffusion systems, constructive methods for stabilization and/or state estimation remain comparatively limited, especially for the latter topic. In \cite{Abouzaid2010} and \cite{Achhab2017}, the authors study positive stabilization of linear infinite-dimensional systems with bounded control operators. The paper \cite{Anita2003} is dedicated to positive stabilization of a parabolic system by means of a geometric method. In \cite{Dehaye2016}, a detailed analysis of a standard example, namely the pure diffusion system, is performed in terms of positive stabilization. Locally positive stabilization of linear systems is investigated in \cite{Abouzaid2022} with illustrations on two classes of one-dimensional diffusion systems. However, in the references mentioned above, there is neither specific optimal control design, study of the state estimation problem, nor observer-based feedback design. Directly (and almost exclusively) related to the positive state estimation problem, \cite{Binid2021} provides a general framework but quite conservative conditions for designing positive observers.
Overall, to the best of our knowledge, no complete (in terms of optimality and implementability) analytical characterization is available in the literature for positively stabilizing exact state feedbacks, positive state observers and observer-based feedbacks within a unified framework. This lack of complete study even concerns diffusion models that are sufficiently simple to admit a complete analytical treatment.

The objective of this paper is to provide a comprehensive analytical study of the positive stabilization, positive state estimation, and observer-based positive stabilization problems for the one-dimensional boundary control diffusion system introduced in \cite{Dehaye2016} with point observation. Although deliberately simple, this model retains the essential features of boundary control diffusion systems with point observation while remaining analytically tractable. This tractability makes it possible to carry out a complete (in terms of optimality and implementability) analysis of the three fundamental positive design problems considered in this paper, leading to explicit analytical solutions. Beyond the specific model under investigation, the proposed analysis provides a unified framework for addressing positive stabilization and state estimation problems in boundary control diffusion systems and offers insight into methodologies that could be extended to more general classes of infinite-dimensional systems.

The main technical contributions of the paper are summarized as follows. First, building upon the characterization of positively stabilizing state-feedback laws established in \cite{Dehaye2016}, we formulate a relevant optimal control problem over the class of admissible feedbacks. We establish the corresponding feasibility conditions and derive the optimal solution in closed form. Second, thanks to connections with the former problem, we obtain the analogous results for positive state estimation. Third, we study observer-based positive stabilization. We prove that the considered system cannot be positively stabilized by an observer-based state feedback of the considered form and related to a positive state observer. This result is consistent with known limitations of the separation principle for positive state linear systems; see \cite{Binid2021} and \cite{Rami2011}. Motivated by this result, we propose an alternative design strategy. Rather than enforcing nonnegativity of the estimated state itself, the proposed approach guarantees nonnegativity of the real state trajectory for any nonnegative initial condition by identifying nontrivial and readily implementable admissible sets of initial estimation errors. Noticeably, all previous problems are adapted and investigated in parallel for a finite-dimensional spatial discretization of arbitrary order, introduced in \cite{Dehaye2016}. The corresponding problems are completely solved, analytically and in a readily implementable way, for an arbitrary discretization step, making the approach interesting in itself but also allowing rigorous comparisons between the solutions of the nominal and discretized models. 

The paper is organized as follows. The considered diffusion system and its discretized version are introduced in Section \ref{section:model}. Sections \ref{section:exact_feedback}, \ref{section:estimation} and \ref{section:observer_based_positive_stabilization} are dedicated to the positive stabilization, the positive state estimation, and the observer-based positive stabilization problems, respectively.
A conclusion with some perspectives is brought in Section \ref{section:conclusion}. Finally, some supplementary technical material is provided in the Appendix.

The following notations will be used throughout. The set $C(0,L)$ denotes the Banach space of continuous real-valued functions on $[0,L]$, while $\mathrm{H}^2(0,L)$ denotes the Sobolev space of functions belonging to the Lebesgue space $\mathrm{L}^2(0,L)$ whose first and second weak derivatives are also in $\mathrm{L}^2(0,L)$. $\mathbb{N}$ denotes the set of nonnegative integers. For any Banach lattice $B$, the set $B_+$ (resp. $B_-$) denotes the usual nonnegative (resp. nonpositive) cone. The relation $x\in B_+$ will also be denoted by $x\geq0$. The characteristic function of the set $S$ is denoted by $\mathbb{1}_S$. The first and last vectors of the canonical basis of ${\mathbb R}^n$ are denoted by $\mathbb{e}_1^{(n)}$ and $\mathbb{e}_n^{(n)}$, respectively, namely $ \mathbb{e}_1^{(n)} := ( \; \delta_{1i} \;  )_{i=1, \cdots,n}^\top$ and $ \mathbb{e}_n^{(n)} := ( \; \delta_{ni} \;  )_{i=1, \cdots,n}^\top$, where $\delta_{kl}$ is the usual Kronecker symbol. For any $j\in\mathbb{N}\setminus\{0\}$ and $l\in\{0,...,j\}$, the $l$-th elementary symmetric polynomial in $j$ variables $x_1,...,x_j$ is denoted by $\sigma_l([x_i]_{i=1}^j):=\sigma_l(x_1,...,x_j)$, where $\sigma_0(x_1,...,x_j)=1$ and $\sigma_l(x_1,...,x_j)=\sum_{1\leq i_1<...<i_l\leq j}x_{i_1}x_{i_2}\cdots x_{i_l}$ otherwise; see \cite[Example 3.84]{Vinberg2003}. For any linear operator $A$, the set $\sigma(A)$ denotes its spectrum. For a square matrix $M \in {\mathbb R}^{n\times n}$ and for any integer $j$ such that $1\leq j \leq n$, the symbol $[M]_{|j}$ ($[M]_{|j|}$, respectively) denotes the principal minor of $M$ that is obtained by computing the determinant of the $j\times j$-submatrix of $M$ constituted by the $j$ last  rows and columns of $M$ (by $j$ successive rows and the corresponding columns of $M$, the first and last rows and columns being excluded, respectively). 

\section{Model description}\label{section:model}
\subsection{Diffusion system}
Let us consider the boundary controlled diffusion system described by the partial differential equation (PDE) 
\begin{equation}\label{eq:pure_diffusion}
    \frac{\partial x}{\partial t}(z,t) = D_a\frac{\partial^2 x}{\partial {z}^2}(z,t), \quad z\in [0,L] ,\: t\geq 0  ,
\end{equation}
with Neumann boundary conditions 
\begin{equation}\label{eq:cdb_control}
    \dfrac{\partial x}{\partial z}(0,t)+u(t) = 0,\quad
    \dfrac{\partial x}{\partial z}(L,t) = 0,
\end{equation}
where~$D_a$ and $L$ are positive constants, $x(\cdot,t)$ is the state at time $t$ and $u$ denotes the boundary control. The measured output $y$ that is considered consists in a boundary observation given by
\begin{equation}\label{eq:output}
    y(t) = x(L,t), \quad t\geq 0.
\end{equation}
The PDE~\eqref{eq:pure_diffusion} is also called "heat equation" in the literature. Indeed, it can be seen as a description of the phenomenon of heat conduction in a metal rod of length~$L$; see e.g. \cite[Subsection 6.1.3]{VandeWouwer2014} or \cite[Section 1]{Crank1975}. In this setting,~$t$ is a temporal variable, and~$z$ is a spatial variable corresponding to the position along the rod. The parameters~$D_a$ and~$L$ denote the diffusion parameter and the domain length, respectively. The quantity~$x(z,t)$ represents the temperature of the rod at point~$z$ and time~$t$, and an initial condition~$x(z,0) = x_0(z), \;\; z\in [0,L]$, is referred to as the initial temperature profile. From a physical point of view, the boundary conditions~\eqref{eq:cdb_control} correspond, for example, to a metal rod that is heated or cooled only at one end and insulated at the other end, where a sensor is placed; see~\eqref{eq:output}.

Note that the uncontrolled (i.e. $u=0$) diffusion system is positive as the associated operator defined by $A_{0} = D_a({\rm  d}^2/{\rm  d}  z^2)$ with domain $    D(A_0) = \{x\in \mathrm{H}^2(0,L) : \frac{{\rm  d}  x}{{\rm  d}  z}(0) =  0 = \frac{{\rm  d}  x}{{\rm  d}  z}(L)  \}$
generates a positive $C_0$-semigroup on the state space $\mathrm{L}^2(0,L)$; see \cite{Laabissi2001}. However, the system is not exponentially stable since the spectrum of $A_0$ contains zero; see \cite[Example 3.2.15]{Curtain_Zwart2020}. 

\subsection{Approximate model}
Here, we present the approximate model which will be used throughout. This approximation is obtained by discretizing the system \eqref{eq:pure_diffusion}-\eqref{eq:output} with respect to the spatial variable $z$. To this end, we consider $n$ discretization points $z_i$, $i=1,\dots,n$, with $z_1=0$ and $z_n=L$. The discretization step, denoted by $\Delta z$, is given by~$\Delta z = L/(n-1)$.
Using a finite difference scheme as in \cite[Section 3.2]{Dehaye2016}, we obtain the discretized diffusion system 
\begin{equation}\label{eq:pure_diffusion_discr_control}
    \dot{x}^{(n)}(t) = A^{(n)}x^{(n)}(t)+b^{(n)} u(t),\quad t\geq 0,
\end{equation}
where 
\begin{equation*}\label{eq:matrice_An}
    A^{(n)} = 
    \begin{pmatrix}
        -p_2 & p_2 & 0 & \cdots  & 0\\
        p_2 & -2p_2 & p_2 &  & \vdots\\
        0 & \ddots & \ddots & \ddots  & 0\\
        \vdots & & p_2 & -2p_2 & p_2\\
        0  & \cdots & 0 & p_2 & -p_2
    \end{pmatrix}\in {\mathbb R}^{n\times n},
\end{equation*}
is a tridiagonal matrix, 
and
\begin{equation}\label{eq:p1 p2}
p_1 = D_a(\Delta z)^{-1},\quad p_2 = D_a(\Delta z)^{-2}.
\end{equation}
Any state trajectory of~\eqref{eq:pure_diffusion_discr_control}
is defined by
\begin{equation*}\label{eq:vecteur_xn}
    x^{(n)}(\cdot) :=
    \begin{pmatrix}
        x(z_1,\cdot) & \cdots & x(z_n,\cdot)   
    \end{pmatrix}^{\top} \in {\mathbb R}^n.
\end{equation*}
Moreover, the discretized output is given by 
\begin{equation}\label{eq:output_discr}
    y(t) = {c^{(n)}}^{\top}x^{(n)}(t), \quad t\geq 0,
\end{equation}
where $c^{(n)} = \mathbb{e}_n^{(n)}$.

One of the advantages of this model is that it shares common features with
the nominal system. Indeed, as mentioned in \cite[Section 3.2]{Dehaye2016}, the discretized diffusion system \eqref{eq:pure_diffusion_discr_control}-\eqref{eq:output_discr} is a positive LTI system, that is, $A^{(n)}$ is a Metzler matrix and $b^{(n)}$ and $c^{(n)}$ are nonnegative vectors. Moreover, \eqref{eq:pure_diffusion_discr_control} is not exponentially stable with zero as the only eigenvalue in the unstable spectrum of $A^{(n)}$. In addition to being tridiagonal, the matrix $A^{(n)}$ is symmetric, and so all its eigenvalues are real.

\section{Positively stabilizing state feedbacks}\label{section:exact_feedback}
As the diffusion system is not exponentially stable, it makes sense to try designing a state feedback law in such a way that the resulting closed-loop system is exponentially stable. To ensure the consistency of the model, this state feedback has to be chosen by paying attention to maintain the state nonnegativity property. It turns out that an exact state feedback law of the form
\begin{equation}\label{eq:state_feedback_pure_diff}
    u(t) = \kappa x(0,t), \quad t\geq 0,
\end{equation}
where $\kappa\in\mathbb{R}$, is positively exponentially stabilizing for the diffusion system,  
i.e. the operator 
$A_{\kappa} = D_a({\rm  d}^2/{\rm  d}  z^2)$, with domain
\begin{equation*}\label{eq:operator_Ak}
   D(A_{\kappa}) = \left\{x\in \mathrm{H}^2(0,L) : \dfrac{{\rm  d}  x}{{\rm  d}  z}(0) + \kappa x(0) = 0 = \dfrac{{\rm  d}  x}{{\rm  d}  z}(L) \right\}
\end{equation*}
is the infinitesimal generator of a positive and exponentially stable $C_0$-semigroup $(T_{A_{\kappa}}(t))_{t\geq0}$ on $\mathrm{L}^2(0,L)$ if, and only if, $\kappa < 0$, or equivalently, its growth bound, denoted by $\omega_0^{\kappa}$, is negative.
Indeed, $(T_{A_{\kappa}}(t))_{t\geq0}$ is positive for all $\kappa\in\mathbb{R}$. See \cite[Theorem 4]{Dehaye2016}. Similar results may also be derived for the discretized system. In that context, the state feedback corresponding to \eqref{eq:state_feedback_pure_diff} is given by 
\begin{equation}\label{eq:state_feedback_pure_diff_discr}
    u(t) =
    \kappa {\mathbb{e}_1^{(n)}}^{\top}x^{(n)}(t),\quad t\geq 0.
\end{equation}
\begin{rem}\label{rem:A_kappa_property}
    For $n$ sufficiently large, the closed-loop discretized system obtained by plugging \eqref{eq:state_feedback_pure_diff_discr} in \eqref{eq:pure_diffusion_discr_control} is positive and exponentially stable, that is the matrix
    \begin{equation*}\label{eq:matrix_Ak1}
        A^{(n)}_{\kappa} := A^{(n)}+\kappa p_1 \mathbb{e}_1^{(n)}{\mathbb{e}_1^{(n)}}^{\top}
    \end{equation*}
    is Metzler and stable, if, and only if,~$\kappa < 0$. Observe that $A^{(n)}_{\kappa}$ is Metzler for all $\kappa\in\mathbb{R}$. See \cite{Dehaye2016}.
\end{rem}

Since there exists an infinite number of state feedbacks of the form \eqref{eq:state_feedback_pure_diff} that are positively exponentially stabilizing for the diffusion system, the goal of this section is to find the best one according to a well-chosen optimality criterion and to develop a numerical scheme for its effective implementation on a discretized model. For any $\alpha>0$, the following optimal control problem $\mathbf{S_{\alpha}}$ is considered for the diffusion system: 
\begin{subequations}\label{eq:pbm_opti}
    \begin{align}
        \displaystyle \min_{\kappa \,\in \,\mathbb{R}}\quad &\kappa^2, \label{eq:pbm_opti_obj}\\
        \text{s.t.}\quad & (T_{A_{\kappa}}(t))_{t\geq0} \text{ is positive,} \label{eq:pbm_opti_const_positivity}\\
        \quad & (T_{A_{\kappa}}(t))_{t\geq0} \text{ is } \beta\text{-exponentially stable},\notag\\ 
        \quad & \,\,\forall \beta\in(-\alpha,0].\label{eq:pbm_opti_const_stability}
    \end{align}
\end{subequations}
\begin{rem}
$\alpha$) Condition~\eqref{eq:pbm_opti_const_stability} is a stability constraint which is stronger than exponential stability: it is equivalent to $\omega_0^{\kappa}\leq-\alpha$; see \cite[Section 4.1]{Curtain_Zwart2020}. The latter inequality ensures the feasibility of $\mathbf{S_{\alpha}}$. 
 In condition \eqref{eq:pbm_opti_const_stability}, the parameter $\alpha$ corresponds to the required minimum stability margin for $(T_{A_{\kappa}}(t))_{t\geq0}$. \\
$\beta$) 
A state feedback of the form~\eqref{eq:state_feedback_pure_diff} where $\kappa$ satisfies~\eqref{eq:pbm_opti_const_positivity} and~\eqref{eq:pbm_opti_const_stability} may be viewed as a constrained low gain (static) state feedback. 
The low gain interpretation comes from the fact that the input~\eqref{eq:state_feedback_pure_diff} is parametrized by the scalar $\kappa$ and converges to zero as $\kappa$ tends to zero. In this context, the choice of the optimality criterion~\eqref{eq:pbm_opti_obj} is motivated  by key features of low gain type state feedbacks such as avoiding actuator saturation by decreasing the value of the low gain parameter; see \cite{Lin2009LowGA}.\\
$\gamma$) Consider the closed-loop transfer function ${\hat g}_{cl}$ (of system \eqref{eq:pure_diffusion}-\eqref{eq:cdb_control} under the feedback \eqref{eq:state_feedback_pure_diff}) from an external input $v(t)$ acting at $z=0$, such that $u(t) = \kappa x(0,t)+v(t)$, to the output  $y(t)=x(L,t)$. This transfer function is given by ${\hat g}_{cl}(s) = \left( \sqrt{s} \sinh{\sqrt{s} - \kappa \cosh{\sqrt{s}}}\right)^{-1}$. Therefore, $\mathbf{S_{\alpha}}$ can be interpreted as maximizing the static gain, i.e. the asymptotic value of the step response of the closed-loop system, under the constraints \eqref{eq:pbm_opti_const_positivity}--\eqref{eq:pbm_opti_const_stability}.
\end{rem}
In the light of those comments, $\mathbf{S_{\alpha}}$ may be viewed as a trade-off 
between a sufficiently high stability margin and the advantages offered by a sufficiently low gain state feedback, while maintaining positivity of the state trajectories.
With a view to solving $\mathbf{S_{\alpha}}$, for any $\alpha>0$, numerically, the analysis is based on the optimal control problem $\mathbf{S_{\alpha}^{(n)}}$ for the discretized diffusion system:
\begin{subequations}\label{eq:pbm_opti_discr}
    \begin{align}
        \displaystyle \min_{\kappa \,\in \,\mathbb{R}}\quad &{\kappa}^2, \label{eq:pbm_opti_discr_obj}\\
        \text{s.t.}\quad & A^{(n)}_{\kappa} \text{ is Metzler,} \label{eq:pbm_opti_discr_const_positivity}\\
        \quad & A^{(n)}_{\kappa} \text{ is } \beta\text{-exponentially stable}, \forall \beta\in(-\alpha,0]\,.\label{eq:pbm_opti_discr_const_stability}
    \end{align}
\end{subequations} 
Clearly, \eqref{eq:pbm_opti_discr_const_stability} is  equivalent to the condition
$$\lambda_d^{\kappa} := \max \{\lambda : \lambda\in\sigma(A^{(n)}_{\kappa})\} \leq-\alpha \; .$$
As for the $\mathbf{S_{\alpha}}$ problem, the feasible set of the $\mathbf{S_{\alpha}^{(n)}}$ problem is the set of all values of $\kappa$ that yield a positively exponentially stabilizing state feedback~\eqref{eq:state_feedback_pure_diff_discr} with an associated stability margin greater or equal to $\alpha$.

\begin{lem}\label{lemma:feasability_practicality}
For all ~$\alpha>0$, the $\mathbf{S_{\alpha}^{(n)}}$ problem~\eqref{eq:pbm_opti_discr} for the discretized diffusion system \eqref{eq:pure_diffusion_discr_control} is feasible, i.e. there exists $\kappa$ such that \eqref{eq:pbm_opti_discr_const_positivity} and~\eqref{eq:pbm_opti_discr_const_stability} hold, if, and only if,~$-\alpha > \alpha_f^{(n)}$, where 
\begin{equation*}
    \alpha_f^{(n)} := \max \{\lambda \in \mathbb{R}: [A^{(n)}-\lambda I]_{|n-1} = 0 \}.
\end{equation*}
In that case, for all $\kappa\in\mathbb{R}$, conditions~\eqref{eq:pbm_opti_discr_const_positivity} and~\eqref{eq:pbm_opti_discr_const_stability} hold if, and only if,
\begin{equation}\label{eq:designk1}
    \kappa\leq\frac{p_2-\alpha}{p_1}+\frac{p_2^2}{p_1}\frac{[A^{(n)}+\alpha I]_{|n-2}}{[A^{(n)}+\alpha I]_{|n-1}}.
\end{equation}
\end{lem}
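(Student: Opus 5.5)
The plan is to turn the spectral constraint into a semidefiniteness condition and then use a Schur complement. Since $A^{(n)}_{\kappa}=A^{(n)}+\kappa p_1\mathbb{e}_1^{(n)}{\mathbb{e}_1^{(n)}}^{\top}$ is Metzler for every $\kappa$ (Remark \ref{rem:A_kappa_property}), condition \eqref{eq:pbm_opti_discr_const_positivity} is automatic. Only \eqref{eq:pbm_opti_discr_const_stability} remains, i.e. $\lambda_d^{\kappa}\le-\alpha$. Because $A^{(n)}_{\kappa}$ is real symmetric, this is equivalent to
\[
M_\kappa:=-(A^{(n)}_{\kappa}+\alpha I)\ \text{ being positive semidefinite.}
\]
The key observation is that $\kappa$ enters only the $(1,1)$ entry of $M_\kappa$, which equals $p_2-\alpha-\kappa p_1$. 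The trailing $(n-1)\times(n-1)$ block of $M_\kappa$ is $N:=-(A^{(n)}+\alpha I)$ restricted to its last $n-1$ rows and columns, and it is independent of $\kappa$.

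\textbf{Step 1: identify $\alpha_f^{(n)}$.} Let $B$ be the trailing $(n-1)\times(n-1)$ principal submatrix of $A^{(n)}$, which is also that of $A^{(n)}_\kappa$. Its eigenvalues are exactly the roots of $\lambda\mapsto[A^{(n)}-\lambda I]_{|n-1}$. Hence $\alpha_f^{(n)}=\lambda_{\max}(B)$, and $N=-(B+\alpha I)$ is positive definite if and only if $-\alpha>\alpha_f^{(n)}$.

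\textbf{Step 2: necessity.} $A^{(n)}_\kappa$ is an irreducible symmetric tridiagonal (Jacobi) matrix, since its off-diagonal entries equal $p_2\neq0$. Hence the eigenvalues of $B$ strictly interlace those of $A^{(n)}_\kappa$; this follows from Cauchy interlacing plus the classical strictness for Jacobi matrices, or from the Sturm-sequence recursion. In particular $\lambda_d^{\kappa}>\alpha_f^{(n)}$ for every $\kappa$. So if $-\alpha\le\alpha_f^{(n)}$, then $\lambda_d^\kappa>-\alpha$ for all $\kappa$, and the problem is infeasible.

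\textbf{Step 3: sufficiency and the bound \eqref{eq:designk1}.} Assume $-\alpha>\alpha_f^{(n)}$, so $N$ is positive definite. A symmetric matrix whose trailing block $N$ is positive definite is positive semidefinite if and only if the Schur complement of $N$ is nonnegative, equivalently $\det M_\kappa\ge0$ since $\det N>0$. Expanding $\det M_\kappa$ along the first row, using the tridiagonal structure, gives
\[
\det M_\kappa=(p_2-\alpha-\kappa p_1)\,\det N_{n-1}-p_2^2\,\det N_{n-2},
\]
where $\det N_j=(-1)^j[A^{(n)}+\alpha I]_{|j}$, with the convention $[\cdot]_{|0}=1$ when $n=2$. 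Dividing by $p_1\det N_{n-1}>0$, the inequality $\det M_\kappa\ge0$ becomes
\[
\kappa\le\frac{p_2-\alpha}{p_1}-\frac{p_2^2}{p_1}\frac{\det N_{n-2}}{\det N_{n-1}}.
\]
The sign factors satisfy $(-1)^{n-2}/(-1)^{n-1}=-1$, which converts this into exactly \eqref{eq:designk1}. The right-hand side is finite, so a feasible $\kappa$ exists, which completes the feasibility equivalence.

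\textbf{Main obstacle.} The delicate point is Step 2. It needs the strict inequality $\lambda_d^\kappa>\lambda_{\max}(B)$ uniformly in $\kappa$, because this is what makes the feasibility condition strict: $-\alpha>\alpha_f^{(n)}$ rather than $\ge$. I would prove it via the three-term recurrence for the characteristic polynomials of the trailing blocks. At $\lambda=\alpha_f^{(n)}$ one has $[A^{(n)}-\lambda I]_{|n-1}=0$, and the characteristic polynomial of $A^{(n)}_\kappa$ reduces to $-p_2^2[A^{(n)}-\lambda I]_{|n-2}$. This value is nonzero, because consecutive polynomials in a Sturm sequence have no common roots, and its sign forces a root of $\det(\lambda I-A^{(n)}_\kappa)$ strictly to the right of $\alpha_f^{(n)}$.
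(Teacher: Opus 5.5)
Your proof is correct, but it takes a different route from the paper's.

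\textbf{The paper's route.} For each fixed $\kappa$ the paper works with the rational function $g_\kappa(\lambda)=\kappa-\frac{p_2+\lambda}{p_1}-\frac{p_2^2}{p_1}\frac{[A^{(n)}-\lambda I]_{|n-2}}{[A^{(n)}-\lambda I]_{|n-1}}$, whose zeros are the eigenvalues of $A^{(n)}_\kappa$. It asserts that $g_\kappa$ decreases strictly from $+\infty$ to $-\infty$ on $(\alpha_f^{(n)},+\infty)$. Hence $\lambda_d^\kappa$ is its unique zero there, which gives $\lambda_d^\kappa>\alpha_f^{(n)}$ and therefore necessity. The bound is then read off from $g_\kappa(-\alpha)\le 0$.

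\textbf{Your route.} You fix $\lambda=-\alpha$ from the start and recast the constraint as $M_\kappa\succeq 0$. For sufficiency and the bound you use the Schur-complement test, which is legitimate exactly when $N\succ 0$, i.e. when $-\alpha>\alpha_f^{(n)}$. For necessity you use strict interlacing for Jacobi matrices. Both arguments handle the same quantity: $p_1 g_\kappa(\lambda)$ is precisely the Schur complement of $B-\lambda I$ in $A^{(n)}_\kappa-\lambda I$, so your Schur complement equals $-p_1 g_\kappa(-\alpha)$.

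\textbf{What each buys.} In your version every step is a textbook matrix fact. The paper's ``it can be shown'' about the monotonicity and limits of $g_\kappa$ hides real work: for instance, writing the ratio as the $(1,1)$ entry of $(B-\lambda I)^{-1}$, differentiating, and using that the top eigenvector of $B$ has a nonzero first component. The paper's version, in turn, describes $\lambda_d^\kappa$ for every $\kappa$ as the zero of $g_0+\kappa$. That description is reused in Theorem \ref{theorem:design_k1_opti} and transfers verbatim to the PDE case through $f_\kappa$, where no finite Schur complement or interlacing is available. Within your framework, the negativity of the bound needed in that theorem is immediate anyway, since $\kappa=0$ is infeasible.

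\textbf{One small point.} In your ``main obstacle'' sketch, the value $-p_2^2[A^{(n)}-\alpha_f^{(n)}I]_{|n-2}$ only forces a root to the right of $\alpha_f^{(n)}$ once you know its sign. That requires the trailing $(n-2)\times(n-2)$ block to have all its eigenvalues strictly below $\alpha_f^{(n)}$, which gives $\det(\alpha_f^{(n)}I-A^{(n)}_\kappa)<0$. This is one more application of the same interlacing, so your Step 2 as stated already suffices.
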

\begin{pf}
First, recall that $A^{(n)}_{\kappa}$ is a Metzler matrix for all $\kappa\in\mathbb{R}$. So we focus on condition~\eqref{eq:pbm_opti_discr_const_stability}. Let~$\kappa\in\mathbb{R}$. A direct computation using cofactor expansion along the first column of $A_{\kappa}^{(n)} - \lambda I$, where $\lambda\in\mathbb{R},$ yields the fact that the (real) eigenvalues of $A^{(n)}_{\kappa}$ correspond to the zeros of the function $g_{\kappa}$, where, for any $\eta \in {\mathbb R}$, the function $g_{\eta} : {\mathbb R}\setminus\{\lambda \in {\mathbb R} : [A^{(n)}-\lambda I]_{|n-1} = 0 \}\rightarrow \mathbb{R}$ is defined by
    \begin{equation*}
        g_{\eta}(\lambda) := \eta -\frac{p_2+\lambda}{p_1}-\frac{p_2^2}{p_1}\frac{[A^{(n)}-\lambda I]_{|n-2}}{[A^{(n)}-\lambda I]_{|n-1}}
    \end{equation*}
(see Appendix A).   
Note that $\alpha_f^{(n)}$ is negative since the matrix generated from $A^{(n)}$ by removing its first column and row, is negative definite. Clearly, the function $g_{\kappa}$ is rational and therefore continuous on its domain. Moreover it can be shown that $g_{\kappa}$ is strictly decreasing on $(\alpha_f^{(n)},+\infty)$, with $\lim_{\lambda \searrow \alpha_f^{(n)}} g_{\kappa}(\lambda) = +\infty$ and $\lim_{\lambda \to +\infty} g_{\kappa}(\lambda) = -\infty$.
Thus, $g_{\kappa}$ admits a unique zero $\lambda^* > \alpha_f^{(n)}$, such that $\lambda^* = \lambda_d^{\kappa} > \alpha_f^{(n)}$. Hence, $\kappa$ being arbitrary, a necessary condition for \eqref{eq:pbm_opti_discr_const_stability} to hold is $-\alpha >\alpha_f^{(n)}$. Conversely, under this condition, as $g_{\kappa}$ is (strictly) decreasing on $(\alpha_f^{(n)},+\infty)$, it is clear that $\lambda_d^{\kappa} \leq-\alpha$ holds if, and only if, $g_{\kappa}(-\alpha) \leq g_{\kappa}(\lambda_d^{\kappa}) = 0$, which yields inequality \eqref{eq:designk1}. It follows that the condition $-\alpha >\alpha_f^{(n)}$ is also sufficient for $\mathbf{S_{\alpha}^{(n)}}$ to be feasible (by choosing $\kappa$ such that \eqref{eq:designk1} is satisfied). \qed
\end{pf}
\begin{rem}
The feasibility condition on $\alpha$ in Lemma \ref{lemma:feasability_practicality}
means that there are no values of $\kappa$ that give a stability margin related to $A^{(n)}_{\kappa}$ that is greater or equal to $-\alpha_f^{(n)}$: it is not possible to ensure any (i.e. too large) stability margin for the closed-loop system \eqref{eq:pure_diffusion_discr_control}, \eqref{eq:state_feedback_pure_diff_discr} while maintaining its positivity. 
\end{rem}
Now we are able to prove the following result which provides, for any $n$, an analytical and explicit expression of the solution of the $\mathbf{S_{\alpha}^{(n)}}$ problem when it is feasible. 
\begin{thm}\label{theorem:design_k1_opti}
Let~$\alpha\in\mathbb{R}$ be such that $\; \alpha_f^{(n)} <  -\alpha < 0$. 
The $\mathbf{S_{\alpha}^{(n)}}$ problem~\eqref{eq:pbm_opti_discr} for the discretized diffusion system \eqref{eq:pure_diffusion_discr_control} has a unique solution~$\kappa_*^{(n)}$, which is given by
    \begin{equation}\label{eq:solution_discr}
        \kappa_*^{(n)} =\frac{1}{\sqrt{D_a}} \cdot
        \frac{\displaystyle\sum_{l=0}^{n-1} \binom{n+l}{2l+1}\frac{(-\alpha)^{l+1}}{p_2^{l+\frac{1}{2}}}}{\displaystyle\sum_{l=0}^{n-1} \binom{n+l-1}{2l}\frac{(-\alpha)^{l}} {p_2^{l}}}.
    \end{equation}
\end{thm}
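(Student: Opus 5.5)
The plan is to combine Lemma~\ref{lemma:feasability_practicality} with an explicit evaluation of the tridiagonal minors. Write
\[
K(\alpha):=\frac{p_2-\alpha}{p_1}+\frac{p_2^2}{p_1}\frac{[A^{(n)}+\alpha I]_{|n-2}}{[A^{(n)}+\alpha I]_{|n-1}} .
\]
Since $\alpha_f^{(n)}<-\alpha$, Lemma~\ref{lemma:feasability_practicality} shows that the feasible set of $\mathbf{S_{\alpha}^{(n)}}$ is exactly the half-line $(-\infty,K(\alpha)]$.

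\textbf{Step 1: the optimum sits at the endpoint.} Minimizing $\kappa^2$ over this half-line has the unique solution $\kappa=K(\alpha)$ as soon as $K(\alpha)<0$. To show this sign, note that $K(\alpha)=-g_0(-\alpha)$, with $g_\eta$ as in the proof of Lemma~\ref{lemma:feasability_practicality}.
\begin{itemize}
\item Since $A^{(n)}$ is singular with $0$ its largest eigenvalue, $\lambda_d^{0}=0$, so $0$ is the unique zero of $g_0$ on $(\alpha_f^{(n)},+\infty)$.
\item The function $g_0$ is strictly decreasing there, and $\alpha_f^{(n)}<-\alpha<0$.
\end{itemize}
Hence $g_0(-\alpha)>0$, i.e.\ $K(\alpha)<0$. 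This gives existence and uniqueness, with $\kappa_*^{(n)}=K(\alpha)$.

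\textbf{Step 2: closed form for $K(\alpha)$.} Set $D_j:=[A^{(n)}+\alpha I]_{|j}$, the determinant of the trailing $j\times j$ block. That block is tridiagonal, with diagonal entries $\alpha-2p_2$ except the last one, which is $\alpha-p_2$. Expanding along the first row gives
\[
D_j=(\alpha-2p_2)D_{j-1}-p_2^2D_{j-2},\qquad D_0=1,\quad D_1=\alpha-p_2 .
\]
Using this recurrence at $j=n$ (formally, with $D_n$ defined by it), the numerator simplifies:
\[
p_1K(\alpha)D_{n-1}=(p_2-\alpha)D_{n-1}+p_2^2D_{n-2}=-D_n-p_2D_{n-1}.
\]
Next, put $x:=-\alpha/p_2$ and $D_j=(-p_2)^jP_j(x)$. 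Then $P_j=(2+x)P_{j-1}-P_{j-2}$ with $P_0=1$ and $P_1=1+x$. I would verify by induction, via Pascal's rule, that
\[
P_j(x)=\sum_{l=0}^{j}\binom{j+l}{2l}x^l
\]
(a Morgan--Voyce type polynomial). This induction is the only genuinely computational step and the main obstacle, mainly in keeping the binomial index bookkeeping straight, including boundary terms.

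\textbf{Step 3: assemble.} Substituting into the identity of Step 2 gives
\[
K(\alpha)=\frac{p_2}{p_1}\,\frac{P_n(x)-P_{n-1}(x)}{P_{n-1}(x)} .
\]
By Pascal's rule,
\[
P_n-P_{n-1}=\sum_{l\ge1}\binom{n+l-1}{2l-1}x^l=\sum_{m=0}^{n-1}\binom{n+m}{2m+1}x^{m+1}.
\]
Finally, $p_2/p_1=(\Delta z)^{-1}=\sqrt{p_2/D_a}$, so
\[
\sqrt{p_2}\,x^{m+1}=\frac{(-\alpha)^{m+1}}{p_2^{m+1/2}},
\]
while the denominator $P_{n-1}(x)$ is exactly $\sum_{l}\binom{n+l-1}{2l}(-\alpha)^l/p_2^l$. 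Together these yield \eqref{eq:solution_discr}.
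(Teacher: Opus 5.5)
Your proof is correct. Step 1 is the paper's argument: the feasible set is the half-line $(-\infty,K(\alpha)]$, and $K(\alpha)=-g_0(-\alpha)<0$ because $g_0$ is strictly decreasing and vanishes at $0$. The one small difference there is how you get $g_0(0)=0$. You use the fact that $0$ is the top eigenvalue of $A^{(n)}$, via the eigenvalue/zero correspondence of Lemma~\ref{lemma:feasability_practicality}. The paper instead proves the ratio identity $[A^{(n)}]_{|n-2}/[A^{(n)}]_{|n-1}=-1/p_2$ by induction in Appendix B.

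The genuine divergence is in the determinant computation. The paper proceeds as follows:
\begin{itemize}
\item It expands along the last column, writing $[A^{(n)}+\alpha I]_{|i}$ in terms of the pure Toeplitz minors $[A^{(n)}+\alpha I]_{|i-1|}$ and $[A^{(n)}+\alpha I]_{|i-2|}$.
\item It evaluates these from the known spectrum $-2p_2+\alpha+2p_2\cos(l\pi/(j+1))$ of symmetric tridiagonal Toeplitz matrices, using Vi\`ete's formulas and the identity $\sigma_l=\binom{2j-l+1}{l}(-1)^l$, which it states without proof.
\item It then simplifies with unspecified binomial identities and computes the numerator directly.
\end{itemize}
You instead expand along the first row. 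This keeps the mixed boundary structure and gives a closed three-term recurrence for the $D_j$ themselves. After rescaling it is the Morgan--Voyce recurrence, so the closed form $P_j(x)=\sum_l\binom{j+l}{2l}x^l$ follows from a short Pascal induction. I checked the boundary indices $l=0$, $l=j-1$, $l=j$, and they work. This closed form coincides with the paper's formula for $[A^{(n)}+\alpha I]_{|i}$. Your formal extension to $D_n$ then turns the numerator into $P_n-P_{n-1}$, whose binomial form is again just Pascal's rule.

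Your route is more elementary and self-contained: it needs no spectral input and no symmetric-function identity to verify. It also recovers the Appendix B ratio as the special case $x=0$, where $P_j(0)=1$. The paper's route makes the link with the explicit Toeplitz spectrum visible, but leans on more external facts.
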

\begin{pf} 
Since $-\alpha >\alpha_f^{(n)}$, Lemma~\ref{lemma:feasability_practicality} ensures that $\mathbf{S_{\alpha}^{(n)}}$ is feasible. Moreover, the function $g_{0}$ is strictly decreasing on $(\alpha_f^{(n)}, +\infty)$, 
therefore $g_0(-\alpha)>g_0(0)$. It can be shown by induction that $[A^{(n)}]_{|n-2} / [A^{(n)}]_{|n-1} = -1/p_2$ (see Appendix B), which implies that $g_0(0)=0$. Consequently, the right-hand side of  \eqref{eq:designk1}, i.e. $-g_0(-\alpha)$, is negative. Thus, in view of Lemma \ref{lemma:feasability_practicality}, the $\mathbf{S_{\alpha}^{(n)}}$ problem \eqref{eq:pbm_opti_discr} has a unique solution~$\kappa_*^{(n)}$, which is given by 
\begin{equation}\label{eq:solution_discr_exp1}
    \kappa_*^{(n)} = \frac{p_2-\alpha}{p_1}+\frac{p_2^2}{p_1}\frac{[A^{(n)}+\alpha I]_{|n-2}}{[A^{(n)}+\alpha I]_{|n-1}}.
\end{equation}
To complete the proof, it remains to derive the closed-form expression \eqref{eq:solution_discr} from \eqref{eq:solution_discr_exp1}. Using a cofactor expansion along the last column yields, for any $i=3,\dots,n-1$, the identity
\begin{equation}\label{eq:det_cofactor_exp}
        [A^{(n)}+\alpha I]_{|i} = (-p_2+\alpha)[A^{(n)}+\alpha I]_{|i-1|}-p_2^2[A^{(n)}+\alpha I]_{|i-2|}\,.
\end{equation}
For all $j=1,\dots,n$,  $[A^{(n)}+\alpha I]_{|j|}$ is the determinant of a symmetric tridiagonal Toeplitz matrix whose $j$ eigenvalues are given by $-2p_2 + \alpha + 2p_2\cos(l\pi/(j+1))$, $l = 1,\dots,j$; see \cite[Section 2]{Noschese2013}. By expanding this determinant and using the Viète's formulas (see \cite[Remark 3.14]{Vinberg2003}), we obtain
\begin{align}
[A^{(n)}+\alpha I]_{|j|} 
= \displaystyle\prod_{l=1}^j \left(-2p_2+\alpha+2p_2\cos\left(\frac{l\pi}{j+1}\right)\right)&\notag \\
= \displaystyle\sum_{l=0}^j \sigma_l\left(\left[-2 +2\cos\left(\frac{i\pi}{j+1}\right)\right]_{i=1}^j\right) p_2^{l}\alpha^{j-l},\label{eq:det_poly_sym_elem}&
\end{align}
where
\begin{equation*}
    \sigma_l\left(\left[-2+2\cos\left(\frac{i\pi}{j+1}\right)\right]_{i=1}^j\right) = \binom{2j-l+1}{l}(-1)^l\,.
\end{equation*}
Plugging \eqref{eq:det_poly_sym_elem} for $j=i-1$ and $j=i-2$ into \eqref{eq:det_cofactor_exp} and simplifying the resulting expression by means of binomial identities leads to
\begin{equation}\label{eq:det_Aa_i}
[A^{(n)}+\alpha I]_{|i} = (-1)^i p_2^i \sum_{l=0}^{i} \binom{i + l}{2l} \frac{(-\alpha)^l}{p_2^l}.
\end{equation}
Rewriting the right-hand side of \eqref{eq:solution_discr_exp1} as a quotient and applying the latter formula for $i = n-1$ and $i = n-2$  yields the following expression for the numerator of \eqref{eq:solution_discr_exp1}: 
\begin{align*}
(p_2 - \alpha)[A^{(n)}+\alpha I]_{|n-1} + p_2^2[A^{(n)}+\alpha I]_{|n-2} =&\\
(-1)^{n-1} p_2^{n-1} \sum_{l=0}^{n-1} \binom{n + l}{2l + 1} \frac{(-\alpha)^{l+1}}{p_2^l} .
\end{align*}
Combining the latter identity with \eqref{eq:det_Aa_i} for $i=n-1$ in \eqref{eq:solution_discr_exp1} and observing that $p_1 = \sqrt{D_ap_2}$ lead to \eqref{eq:solution_discr}.\qed
\end{pf}
Observe that Theorem \ref{theorem:design_k1_opti} provides two expressions for the solution $\kappa_*^{(n)}$ of the $\mathbf{S_{\alpha}^{(n)}}$ problem, namely \eqref{eq:solution_discr} and \eqref{eq:solution_discr_exp1}. The former has been derived from the latter in order to get a computable expression that is easier to use in the convergence analysis that is performed hereafter.   
\begin{lem}\label{lemma:conv_sol}
    Let~$\alpha\in\mathbb{R}$ be such that $0< \alpha<\pi^2 D_a/(4L^2)$. The solution $\kappa_*^{(n)}$ of the $\mathbf{S_{\alpha}^{(n)}}$ problem~\eqref{eq:pbm_opti_discr} converges to $\kappa_*$ given by~
    \begin{equation}\label{eq:solution}
        \kappa_* :=  -\sqrt{\alpha D_a^{-1}}\tan{(L\sqrt{\alpha D_a^{-1}})},
    \end{equation}
    i.e.  
    \begin{equation}\label{eq:convergence}
     \kappa_*^{(n)}\mathbb{1}_{\left(0,-\alpha_f^{(n)}\right)}(\alpha) \rightarrow   \kappa_*\quad \text{as}\quad n\rightarrow \infty\,.
    \end{equation}
\end{lem}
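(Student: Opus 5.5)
The plan has two parts: show that $\alpha$ is eventually in the feasible range $(0,-\alpha_f^{(n)})$, and then pass to the limit in the closed form \eqref{eq:solution_discr} by recognizing the two sums as truncated power series.

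\textbf{Feasibility for large $n$.} The indicator equals $1$ exactly when $\alpha<-\alpha_f^{(n)}$. By definition, $\alpha_f^{(n)}$ is the largest eigenvalue of the $(n-1)\times(n-1)$ matrix obtained from $A^{(n)}$ by deleting its first row and column. That matrix is tridiagonal with diagonal $(-2p_2,\dots,-2p_2,-p_2)$ and off-diagonal entries $p_2$. I would compute its spectrum explicitly. With the ansatz $v_k=\sin(k\theta)$, the Dirichlet-type condition at the first row holds automatically. The Neumann-type last row then forces $\sin(n\theta)=\sin((n-1)\theta)$, which gives $\theta=(2m+1)\pi/(2n-1)$. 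The eigenvalues are therefore $-2p_2\bigl(1-\cos\theta\bigr)=-4p_2\sin^2(\theta/2)$, and the largest is
\[
\alpha_f^{(n)}=-4p_2\sin^2\!\bigl(\pi/(2(2n-1))\bigr).
\]
Since $p_2=D_a(n-1)^2/L^2$, this gives $-\alpha_f^{(n)}\to \pi^2D_a/(4L^2)$. Hence, for $0<\alpha<\pi^2D_a/(4L^2)$, the indicator is eventually $1$ and Theorem \ref{theorem:design_k1_opti} applies, so it suffices to show $\kappa_*^{(n)}\to\kappa_*$. (Alternatively, one can deduce the zeros of $[A^{(n)}-\lambda I]_{|n-1}$ from formula \eqref{eq:det_Aa_i}, but the eigenvector ansatz is cleaner.)

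\textbf{Limit of the closed form.} Set $h=\Delta z=L/(n-1)$, so $p_2^{-1}=h^2/D_a$, and put $s:=\sqrt{\alpha/D_a}$. Then $(-\alpha)^l/p_2^l=(-1)^l(sh)^{2l}$ and $(-\alpha)^{l+1}/p_2^{l+1/2}=-\sqrt{D_a}\,s\,(-1)^l(sh)^{2l+1}$. Formula \eqref{eq:solution_discr} becomes
\[
\kappa_*^{(n)}=-s\,\frac{N_n}{M_n},\qquad N_n=\sum_{l}(-1)^l\binom{n+l}{2l+1}(sh)^{2l+1},\qquad M_n=\sum_l(-1)^l\binom{n+l-1}{2l}(sh)^{2l}.
\]
For fixed $l$, as $n\to\infty$ we have $\binom{n+l}{2l+1}h^{2l+1}\to L^{2l+1}/(2l+1)!$ and $\binom{n+l-1}{2l}h^{2l}\to L^{2l}/(2l)!$. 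So termwise $N_n\to\sin(sL)$ and $M_n\to\cos(sL)$.

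\textbf{Justifying the termwise limit.} To interchange limit and sum I would use dominated convergence for series. The bound $\binom{n+l}{2l+1}\le (n+l)^{2l+1}/(2l+1)!$ together with $(n+l)h\le L\cdot\frac{2n}{n-1}\le 4L$ for $l\le n-1$ gives a summable dominating sequence $(4sL)^{2l+1}/(2l+1)!$, and similarly for $M_n$. Since $sL<\pi/2$, we have $\cos(sL)>0$, so $M_n$ is eventually nonzero and $\kappa_*^{(n)}\to -s\tan(sL)=\kappa_*$.

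\textbf{Main obstacle.} The delicate step is the first one: pinning down $\alpha_f^{(n)}$ exactly, that is, verifying the eigenvector ansatz and confirming that it indeed gives the \emph{maximal} root. I would double-check this against \eqref{eq:det_Aa_i} using Chebyshev-type identities; the series limit itself is routine.
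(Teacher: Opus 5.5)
Your proof is correct and has the same skeleton as the paper's. Feasibility holds eventually because $-\alpha_f^{(n)}\to\pi^2D_a/(4L^2)$. Dominated convergence then shows that the numerator and denominator of \eqref{eq:solution_discr} tend to multiples of $\sin(L\sqrt{\alpha D_a^{-1}})$ and $\cos(L\sqrt{\alpha D_a^{-1}})$, and $\cos(L\sqrt{\alpha D_a^{-1}})>0$ closes the argument.

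Two steps are executed differently, both to your advantage.

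First, the paper only asserts $-\alpha_f^{(n)}\to\pi^2D_a/(4L^2)$, whereas you prove it through the explicit formula $\alpha_f^{(n)}=-4p_2\sin^2(\pi/(2(2n-1)))$. The maximality you flag as delicate is actually immediate. The values $\theta_m=(2m+1)\pi/(2n-1)$, $m=0,\dots,n-2$, lie in $(0,\pi)$ and give $n-1$ distinct eigenvalues. Their eigenvectors are nonzero, since the first component is $\sin\theta_m\neq0$. So you have the whole spectrum of the trailing $(n-1)\times(n-1)$ block, and the largest eigenvalue is the one at $m=0$.

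Second, the paper compares $v_n$ with the truncated Maclaurin sum $u_n$ and dominates $|1-g_n(l)|$ through bounds on elementary symmetric polynomials. You instead dominate the terms directly using $\binom{N}{k}\le N^k/k!$ and $(n+l)\Delta z\le 4L$, which gives the summable majorant $(4sL)^{2l+1}/(2l+1)!$ at once. Both are the same Tannery-type interchange of limit and sum; yours simply reaches the majorant more quickly.
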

\begin{pf}
        Consider the sequences $(u_n)$ and $(v_n)$ which are defined, for all $n\in\mathbb{N}\setminus\{0,1\}$, by
        \begin{equation*}
            \left\{
            \begin{aligned}
                u_n &:= \sum_{l=0}^{n-1} \frac{1}{(2l+1)!}\frac{(-\alpha)^{l+1}L^{2l+1}}{D_a^{l+\frac{1}{2}}},\\
                v_n &:= \sum_{l=0}^{n-1} \binom{n+l}{2l+1}\frac{(-\alpha)^{l+1}}{p_2^{l+\frac{1}{2}}},
            \end{aligned}
            \right.
        \end{equation*}
        where $v_n$ appears in the numerator of $\kappa_*^{(n)}$, defined by \eqref{eq:solution_discr}. Appealing to the Maclaurin series of $\sin(\cdot)$ leads to $u_n\rightarrow -\sqrt{\alpha}\sin(L\sqrt{\alpha D_a^{-1}})$ as $n\rightarrow\infty$. In Appendix C, it is shown that $|u_n-v_n|\rightarrow 0$ as $n\rightarrow \infty$. Thus,
        \begin{equation}\label{eq:conv_vn}
            v_n\rightarrow -\sqrt{\alpha}\sin(L\sqrt{\alpha D_a^{-1}}) \quad \text{as}\quad n\rightarrow\infty.
        \end{equation}
        By arguments similar to those used to prove \eqref{eq:conv_vn}, it can be shown that 
        \begin{equation}\label{eq:conv_deno}
            {\displaystyle\sum_{l=0}^{n-1} \binom{n+l-1}{2l}\frac{(-\alpha)^{l}} {p_2^{l}}}\rightarrow \cos(L\sqrt{\alpha D_a^{-1}})\quad\text{as}\quad n\rightarrow\infty.
        \end{equation}
        Observe now that, since $0<\alpha<\pi^2 D_a/(4L^2)$ and $-\alpha_f^{(n)}\rightarrow \pi^2 D_a/(4L^2)$ as $n\rightarrow\infty$, there exists $N_{\alpha}\in\mathbb{N}$ such that 
        \begin{equation}\label{eq:cdt_alpha}
            -\alpha_f^{(n)}>\alpha \quad \forall\:n\geq N_{\alpha},
        \end{equation}
        and so, by Theorem \ref{theorem:design_k1_opti}, for all $n\geq N_{\alpha}$, the optimization problem \eqref{eq:pbm_opti_discr} is feasible and the denominator of \eqref{eq:solution_discr} is nonzero. Combining the latter property with \eqref{eq:conv_vn}, \eqref{eq:conv_deno}, \eqref{eq:cdt_alpha}, and the fact that $\cos(L\sqrt{\alpha D_a^{-1}})\neq 0$, we conclude that \eqref{eq:convergence} holds.
\qed 
\end{pf}
As was hopefully expected, it turns out that the asymptotic value obtained in Lemma \ref{lemma:conv_sol} is actually the solution of the $\mathbf{S_{\alpha}}$ problem.
\begin{thm}\label{theorem:design_kappa_opti}
    Fix~$\alpha>0$. The $\mathbf{S_{\alpha}}$ problem~\eqref{eq:pbm_opti} for the diffusion system \eqref{eq:pure_diffusion}-\eqref{eq:cdb_control} with a state feedback of the form~\eqref{eq:state_feedback_pure_diff} is feasible if, and only if, 
    $\alpha <\pi^2 D_a/(4L^2)$. In this case, $\mathbf{S_{\alpha}}$ has a unique solution $\kappa_*$ given by \eqref{eq:solution}.
\end{thm}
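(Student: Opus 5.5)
I will work directly with the spectral problem for $A_\kappa$, rather than passing to the limit in Lemma~\ref{lemma:conv_sol}. Lemma~\ref{lemma:conv_sol} only identifies the value $\kappa_*$ and serves as a consistency check. Positivity \eqref{eq:pbm_opti_const_positivity} holds for every $\kappa$ by \cite[Theorem 4]{Dehaye2016}, so only the stability constraint \eqref{eq:pbm_opti_const_stability} matters. By the remark following $\mathbf{S_{\alpha}}$, this constraint is equivalent to $\omega_0^{\kappa}\le -\alpha$. Since $A_\kappa$ is self-adjoint (Robin and Neumann boundary conditions) with compact resolvent, $\omega_0^\kappa$ equals its largest eigenvalue $\lambda_1(\kappa)$. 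The problem therefore reduces to describing the set $\{\kappa:\lambda_1(\kappa)\le-\alpha\}$ and minimizing $\kappa^2$ over it.

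Next I will compute the spectrum explicitly. Set $\mu=\sqrt{-\lambda/D_a}$ for $\lambda<0$. Solutions of $D_a x''=\lambda x$ with $x'(L)=0$ are multiples of $\cos(\mu(L-z))$. The Robin condition $x'(0)+\kappa x(0)=0$ then becomes
\begin{equation*}
\mu\sin(\mu L)+\kappa\cos(\mu L)=0, \quad\text{i.e.}\quad \kappa=-\mu\tan(\mu L)=:h(\mu).
\end{equation*}
For $\lambda\ge0$, analogous computations with $\cosh$ show that a nonnegative eigenvalue exists iff $\kappa\ge0$. On $(0,\pi/(2L))$, $h$ is strictly decreasing from $0$ to $-\infty$. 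Moreover, the first eigenfunction is positive, so the principal eigenvalue corresponds to $\mu L\in(0,\pi/2)$. Equivalently, one can invoke the Rayleigh quotient: $\lambda_1(\kappa)$ is continuous and increasing in $\kappa$, and $\lambda_1(\kappa)\to -D_a\pi^2/(4L^2)$ as $\kappa\to-\infty$, which is the Dirichlet-at-$0$/Neumann-at-$L$ limit. Hence, for $\kappa<0$, $\lambda_1(\kappa)=-D_a\mu(\kappa)^2$, where $\mu(\kappa)\in(0,\pi/(2L))$ is the unique solution of $h(\mu)=\kappa$. Consequently $\lambda_1(\kappa)>-D_a\pi^2/(4L^2)$ for every $\kappa$.

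Feasibility follows immediately. If $\alpha\ge \pi^2D_a/(4L^2)$, no $\kappa$ satisfies $\lambda_1(\kappa)\le-\alpha$. If $\alpha<\pi^2D_a/(4L^2)$, set $\mu_\alpha=\sqrt{\alpha/D_a}\in(0,\pi/(2L))$. By monotonicity of $h$, the condition $\lambda_1(\kappa)\le-\alpha$ is equivalent to $\mu(\kappa)\ge\mu_\alpha$, which is in turn equivalent to $\kappa\le h(\mu_\alpha)=\kappa_*<0$. The feasible set is therefore $(-\infty,\kappa_*]$, and $\kappa^2$ has the unique minimizer $\kappa_*$ given by \eqref{eq:solution}. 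This mirrors \eqref{eq:designk1}, with $g_\kappa$ replaced by $\kappa-h$.

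The main obstacle is rigorously establishing that the largest eigenvalue lies on the first branch $\mu L\in(0,\pi/2)$. This in turn yields the strict lower bound $\lambda_1>-D_a\pi^2/(4L^2)$ uniformly in $\kappa$. To prove it, I will use the fact that the other roots of $\kappa=-\mu\tan(\mu L)$ satisfy $\mu L>\pi/2$, which follows from the sign pattern of $\tan$ on each branch. Combined with the count of nonnegative eigenvalues above, this shows the first-branch root yields the maximal eigenvalue.
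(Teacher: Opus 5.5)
Your proposal is correct and follows essentially the same route as the paper. The paper's function $f_\kappa(\lambda)=\kappa+\sqrt{-\lambda D_a^{-1}}\tan(L\sqrt{-\lambda D_a^{-1}})$ is exactly your $\kappa-h(\mu)$ with $\mu=\sqrt{-\lambda D_a^{-1}}$; its strict monotonicity on $(-\pi^2D_a/(4L^2),0]$ yields the feasibility threshold and the feasible set $(-\infty,\kappa_*]$ just as in your argument, and your explicit checks (that $\omega_0^\kappa$ equals the largest eigenvalue by self-adjointness, and that this eigenvalue lies on the first branch $\mu L\in(0,\pi/2)$) make precise what the paper compresses into ``arguments similar to those in the proof of Lemma~\ref{lemma:feasability_practicality}''.
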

\begin{pf} 
    Recall that the semigroup $(T_{A_{\kappa}}(t))_{t\geq0}$ is positive for every $\kappa\in\mathbb{R}$. The feasibility property of $\mathbf{S_{\alpha}}$ is therefore based on condition~\eqref{eq:pbm_opti_const_stability}. The parameter $\alpha$ being positive, a necessary condition for the latter condition is $\kappa<0$. So, fix $\kappa<0$. A straightforward spectral analysis shows that the eigenvalues of $A_{\kappa}$ are precisely the real zeros of the function $f_{\kappa}$, where, for any $\eta \in {\mathbb R}$, $f_{\eta} : {\mathbb R_-}\setminus\{-\frac{D_a}{4L^2}(\pi+2k\pi)^2:k\in\mathbb{N} \}\rightarrow \mathbb{R}$ is defined by
    \begin{equation*}
        f_{\eta}(\lambda) := \eta + \sqrt{-\lambda D_a^{-1}}\tan(L\sqrt{-\lambda D_a^{-1}}).
    \end{equation*} 
    Moreover, $f_{\kappa}$ is continuous and strictly decreasing on $(-\pi^2D_a/(4L^2),0]$, and satisfies $\lim_{\lambda \searrow \frac{-\pi^2D_a}{4L^2}} f_{\kappa}(\lambda) = +\infty$ and $f_{\kappa}(0) = \kappa.$
    Using arguments similar to those in the proof of Lemma~\ref{lemma:feasability_practicality}, with $f_{\kappa}$ and $-\pi^2D_a/(4L^2)$ replacing $g_{\kappa}$ and $\alpha_f^{(n)}$, respectively, we obtain the following results: $\mathbf{S_{\alpha}}$ is feasible if, and only if, $\alpha <\pi^2 D_a/(4L^2)$, and, in that case, for all $\kappa<0$, \eqref{eq:pbm_opti_const_positivity} and \eqref{eq:pbm_opti_const_stability} hold if, and only if,
    \begin{equation}\label{eq:design_kappa}
        \kappa \leq -\sqrt{\alpha D_a^{-1}}\tan(L\sqrt{\alpha D_a^{-1}}).
    \end{equation}
    If $0<\alpha <\pi^2 D_a/(4L^2)$, then the right-hand side of \eqref{eq:design_kappa} is negative and the latter equivalence is actually valid for all $\kappa\in\mathbb{R}$. Thus, the $\mathbf{S_{\alpha}}$ problem has a unique solution $\kappa_*$ given by \eqref{eq:solution}.\qed
\end{pf}
From Theorem~\ref{theorem:design_kappa_opti}, it follows that it is not possible to design a positively exponentially stabilizing state feedback of the form~\eqref{eq:state_feedback_pure_diff} for the diffusion system \eqref{eq:pure_diffusion}-\eqref{eq:cdb_control} with a stability margin greater or equal to $\pi^2 D_a/(4L^2)$. On the other hand, if~$0<\alpha <\pi^2 D_a/(4L^2)$, then $|\kappa_*|$ is the smallest value of $|\kappa|$ that yields a positively exponentially stabilizing state feedback of the form~\eqref{eq:state_feedback_pure_diff} with a stability margin greater or equal to $\alpha$. 

\section{Positive State Observers}\label{section:estimation}
In Section~\ref{section:exact_feedback}, we designed exact state feedbacks of the form $u(t)=\kappa x(0,t)$ to achieve positive exponential stabilization of the diffusion system \eqref{eq:pure_diffusion}-\eqref{eq:cdb_control}. However, implementing such feedback laws requires the state $x(\cdot,t)$ to be sufficiently well known at each time $t\geq0$. Here, only the output $y(t)=x(L,t)$ is available (via sensors, in practice). Consequently, it becomes necessary to be able to estimate the full state by means of the available output. This motivates the design of a state observer for the uncontrolled (that is, $u=0$) diffusion system \eqref{eq:pure_diffusion}-\eqref{eq:cdb_control}. In this section, in addition to guaranteeing the asymptotic convergence of the estimation error, the goal is to design state observers that preserve the nonnegativity of the estimated state, in order to comply with the positivity of the nominal system.

To this end, let us consider the following state observer for the uncontrolled system \eqref{eq:pure_diffusion}-\eqref{eq:output}:
\begin{equation} \label{eq:pure_diffusion_observer}
    \frac{\partial \tilde{x}}{\partial t}(z,t) = D_a\frac{\partial^2 \tilde{x}}{\partial z^2}(z,t), \quad z \in [0,L], \: t \geq 0,
\end{equation}
with the boundary conditions
\begin{equation} \label{eq:observer_BC}
        \frac{\partial \tilde{x}}{\partial z}(0,t) = 0, \quad
        \frac{\partial \tilde{x}}{\partial z}(L,t) - \gamma(\tilde{y}(t) - y(t)) = 0,
\end{equation}
where the measured output $y$ is given by \eqref{eq:output}, and where
\begin{equation}\label{eq:estimated_output}
    \tilde{y}(t):=\tilde{x}(L,t),\quad t\geq 0,
\end{equation}
denotes the estimated output, and $\gamma \in \mathbb{R}$ is the observer gain, to be designed. The dynamics of the state estimation error $e:=\tilde{x}-x$ is described by the following PDE:
\begin{equation} \label{eq:error_dynamics}
    \frac{\partial e}{\partial t}(z,t) = D_a\frac{\partial^2 e}{\partial z^2}(z,t), \quad z \in [0,L], \: t \geq 0,
\end{equation}
with the boundary conditions
\begin{equation} \label{eq:error_BC}
        \frac{\partial e}{\partial z}(0,t) = 0, \quad
        \frac{\partial e}{\partial z}(L,t) - \gamma e(L,t) = 0.
\end{equation}
By considering the invertible map $S : \mathrm{H}^2(0,L)\to \mathrm{H}^2(0,L)$ given by $(Sx)(z) = x(L-z)$, we notice that the estimation error dynamics \eqref{eq:error_dynamics}-\eqref{eq:error_BC} is equivalent to the closed-loop system analyzed in Section~\ref{section:exact_feedback}, where $\gamma$ replaces the control gain $\kappa$. Consequently, the estimation error dynamics~\eqref{eq:error_dynamics}-\eqref{eq:error_BC} is exponentially stable, or equivalently the operator 
$SA_{\gamma}S^{-1}$, with domain $D(SA_{\gamma}S^{-1})$, is the infinitesimal generator of an exponentially stable $C_0$-semigroup on $\mathrm{L}^2(0,L)$ if, and only if, $\gamma < 0$; see \cite[Theorem 4]{Dehaye2016}. 
Moreover the estimation error dynamics~\eqref{eq:error_dynamics}-\eqref{eq:error_BC} is positive for all $\gamma\in\mathbb{R}$.

Beyond stability of the estimation error dynamics, we aim at ensuring the nonnegativity of any estimated state trajectory. For notational simplicity, let $\mathcal{A}$ denote the following operator on $\mathrm{L}^2(0,L) \times \mathrm{L}^2(0,L)$
    \begin{equation*}
        \mathcal{A} = \begin{pmatrix}
            D_a({\rm  d}^2/{\rm  d}  z^2) & 0\\
            0 & D_a({\rm  d}^2/{\rm  d}  z^2)
        \end{pmatrix}.
    \end{equation*}
\begin{prop}\label{prop:positivity_A0gamma}
    For all $\gamma< 0$, the operator $A_{0\gamma}$ defined by $A_{0\gamma}X=\mathcal{A}X$ for $X\in D(A_{0\gamma})$, where  \begin{equation*}\label{eq:operator_A0gamma}
    \begin{aligned}
            &D(A_{0\gamma}) = \left\{X=(x,\tilde{x})\in \mathrm{H}^2(0,L) \times \mathrm{H}^2(0,L):\right.\\ &\dfrac{{\rm  d}}{{\rm  d}  z}\begin{pmatrix}x\\\tilde{x}\end{pmatrix}(0) = 0,\left.\dfrac{{\rm  d}}{{\rm  d}  z}\begin{pmatrix}x\\\tilde{x}\end{pmatrix}(L) = \begin{pmatrix}
            0&0\\-\gamma&\gamma\end{pmatrix}\begin{pmatrix}x\\\tilde{x}\end{pmatrix}(L) \right\},  
    \end{aligned}
    \end{equation*}
    generates a positive $C_0$-semigroup $(T_{A_{0\gamma}}(t))_{t\geq 0}$ on $\mathrm{L}^2(0,L) \times \mathrm{L}^2(0,L)$. 
\end{prop}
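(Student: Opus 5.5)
The plan is to realize $A_{0\gamma}$ as the operator associated with a sesquilinear (bilinear, real) form on $V:=\mathrm{H}^1(0,L)\times\mathrm{H}^1(0,L)$. Generation then follows from standard form methods, and positivity follows from the first Beurling--Deny criterion in Ouhabaz's non-symmetric version. The guiding computation is integration by parts: for $X=(x,\tilde x)\in D(A_{0\gamma})$ and $Y=(y,\tilde y)\in V$,
\begin{equation*}
-\langle A_{0\gamma}X,Y\rangle = D_a\int_0^L (x'y'+\tilde x'\tilde y')\,{\rm d}z - D_a\gamma\,\tilde x(L)\tilde y(L) + D_a\gamma\, x(L)\tilde y(L),
\end{equation*}
because $x'(0)=\tilde x'(0)=0$, $x'(L)=0$ and $\tilde x'(L)=\gamma(\tilde x(L)-x(L))$. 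We therefore define $a(X,Y)$ on $V$ by the right-hand side.

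\textbf{Generation.} First, $a$ is densely defined and continuous on $V$, since point evaluation at $L$ is continuous on $\mathrm{H}^1(0,L)$. Second, $a$ is $\mathrm{L}^2$-elliptic. The boundary terms are form-small: the trace inequality $|w(L)|^2\le \varepsilon\|w'\|^2+C_\varepsilon\|w\|^2$ yields $a(X,X)+\omega\|X\|^2\ge \tfrac{D_a}{2}\|X\|_V^2$ for some $\omega$ large. Hence $a$ is a closed, sectorial (non-symmetric) form, and its associated operator $-B$ generates an analytic $C_0$-semigroup on $\mathrm{L}^2\times\mathrm{L}^2$. Third, we identify $B=A_{0\gamma}$. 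The inclusion $A_{0\gamma}\subset B$ is the computation above. For the converse, take $X\in D(B)$ and first test with $Y\in C_c^\infty\times C_c^\infty$; this gives $D_aX''=BX\in\mathrm{L}^2$, so $X\in \mathrm{H}^2\times\mathrm{H}^2$. Then test with general $Y\in V$ and integrate by parts; the boundary terms at $0$ and $L$ force exactly the boundary conditions defining $D(A_{0\gamma})$. I expect this identification, in particular the regularity and boundary-condition recovery, to be the main technical step, although it is routine.

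\textbf{Positivity.} By Ouhabaz's criterion, the semigroup is positive if and only if two conditions hold: $X\in V$ implies $X^+\in V$, and $a(X^+,X^-)\le 0$ for all $X\in V$. The lattice property of $\mathrm{H}^1(0,L)$ gives the first condition, with $(w^+)'=w'\mathbb{1}_{\{w>0\}}$. For the second, the integral terms vanish because $(w^+)'(w^-)'=0$ a.e. The term $-D_a\gamma\,\tilde x^+(L)\tilde x^-(L)$ vanishes as well. The only surviving term is
\begin{equation*}
a(X^+,X^-)=D_a\gamma\, x^+(L)\,\tilde x^-(L)\le 0,
\end{equation*}
since $\gamma<0$ and $x^+(L),\tilde x^-(L)\ge0$. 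This is exactly where the hypothesis $\gamma<0$ enters. Physically, the observer receives the nonnegative boundary flux $-\gamma x(L,t)$ from the true state. Hence $(T_{A_{0\gamma}}(t))_{t\ge0}$ is positive.

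A fallback, should the form route be problematic, is to write $\tilde x$ as the solution of the Robin problem $\tilde x_z(L)-\gamma\tilde x(L)=-\gamma x(L,t)\ge0$, driven by the positive semigroup of $x$. Positivity would then follow from the positivity of $SA_\gamma S^{-1}$ combined with a maximum-principle or variation-of-constants argument for nonnegative boundary inputs.
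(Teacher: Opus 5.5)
Your proof is correct, but it follows a genuinely different route from the paper's.

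\textbf{The paper's route.} The paper works directly with the resolvent. For large real $\lambda$ it writes $R(\lambda,A_{0\gamma})$ as an integral operator with explicit Green kernels $G_1,G_2,G_3$, built from $\cosh$, $\sinh$ and the factor $(\mu\sinh(\mu L)-\gamma\cosh(\mu L))^{-1}$. It checks that each kernel is nonnegative when $\gamma<0$. It then concludes with the resolvent-positivity criterion of \cite[Corollary 11.4]{Batkai2017}. That criterion presupposes that $A_{0\gamma}$ is already a generator. The paper leaves this implicit; it follows, for instance, from similarity with the decoupled $(x,e)$-system $A_0\oplus SA_\gamma S^{-1}$.

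\textbf{What your route buys.} Your form approach gives generation, and even analyticity, directly from continuity and $\mathrm{L}^2$-ellipticity of $a$. Both hold for every real $\gamma$. Positivity then reduces to the sign of the single cross term $D_a\gamma\,x^+(L)\tilde x^-(L)$, so no kernel has to be inspected. Because the Beurling--Deny--Ouhabaz criterion is an equivalence, you also get a converse for free. Take $x\ge 0$ with $x(L)>0$ and $\tilde x\le 0$ with $\tilde x(L)<0$. This shows the coupled semigroup is not positive for $\gamma>0$, so positivity holds exactly for $\gamma\le 0$. The paper does not prove this converse. In exchange, the paper's computation produces explicit kernels, in the same spirit as the explicit $R(0,A_{\kappa\gamma})$ used in Proposition~\ref{prop:non_positivity_A_kappagamma}.

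\textbf{Two cosmetic fixes.} First, since $a(X,Y)=-\langle A_{0\gamma}X,Y\rangle$, the operator associated with $a$ is $-A_{0\gamma}$, so it is $A_{0\gamma}$ itself that generates. Your phrases ``$-B$ generates'' and ``$D_aX''=BX$'' mix the two sign conventions. Second, Ouhabaz's theorem is stated for accretive forms on an $\mathrm{L}^2(\Omega)$ with a real semigroup. Apply it to $a+\omega\langle\cdot,\cdot\rangle$, which leaves $a(X^+,X^-)$ unchanged because $\langle X^+,X^-\rangle=0$. Work on $\mathrm{L}^2((0,L)\times\{1,2\})\cong\mathrm{L}^2(0,L)\times\mathrm{L}^2(0,L)$, and note that the semigroup is real because $a$ has real coefficients.
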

\begin{pf}
    Let $\gamma< 0$ and consider the resolvent operator $R(\lambda,A_{0\gamma})$.  
    By \cite[Corollary 11.4]{Batkai2017}, we have to show that $R(\lambda,A_{0\gamma})\geq0$ for large real $\lambda$. So, fix $\lambda$ sufficiently large. 
    By using standard arguments, one gets that, for all $X:=(x \hspace{0.2cm}\tilde{x})^{\top}\in \mathrm{L}^2(0,L)\times \mathrm{L}^2(0,L)$ and $z\in[0,L]$,
    \begin{equation*}
            (R(\lambda,A_{0\gamma})X)(z) = \begin{bmatrix}
           \int_0^L \frac{G_1(z,\zeta)}{D_a}x(\zeta)\,\rm{d}\zeta \\
           \int_0^L \frac{G_2(z,\zeta)}{D_a}x(\zeta)+\frac{G_3(z,\zeta)}{D_a}\tilde{x}(\zeta)\,\rm{d}\zeta 
        \end{bmatrix},
    \end{equation*}
    where, with $\displaystyle \mu := \sqrt{\lambda D_a^{-1}}$, for all $\zeta\in[0,L]$,
    \begin{equation*}
        \left\{
        \begin{aligned}
            G_1(z,\zeta) &:= \frac{f(\zeta)\cosh(\mu z)}{\mu\sinh(\mu L)} - \frac{g(z,\zeta)}{\mu} \mathbb{1}_{[0,z]}(\zeta), \\
            G_2(z,\zeta) &:= \frac{-\gamma\cosh(\mu \zeta)}{\mu\sinh(\mu L)}h(z),\\
            G_3(z,\zeta) &:= (f(\zeta)-\frac{\gamma}{\mu} g(L,\zeta))h(z)- \frac{g(z,\zeta)}{\mu} \mathbb{1}_{[0,z]}(\zeta),
        \end{aligned}
        \right.
    \end{equation*}
    and $f(\zeta) := \cosh(\mu(L-\zeta))$, $g(z,\zeta) := \sinh(\mu(z-\zeta))$, $h(z) := \cosh(\mu z)(\mu\sinh(\mu L)-\gamma\cosh(\mu L))^{-1}$.
    Since $\gamma<0$, by using standard trigonometric identities, we find that $G_i(z,\zeta)\geq0$ for all $i\in\{1,2,3\}$ and $z,\zeta\in[0,L]$. Consequently, if $X=(x \hspace{0.2cm}\tilde{x})^{\top}\in (\mathrm{L}^2(0,L)\times \mathrm{L}^2(0,L))_+$, then $R(\lambda,A_{0\gamma})X$ is nonnegative.\qed
\end{pf}
A consequence of Proposition \ref{prop:positivity_A0gamma} is that, by combining the state dynamics \eqref{eq:pure_diffusion}-\eqref{eq:cdb_control} (with $u=0$) and the state observer dynamics \eqref{eq:pure_diffusion_observer}-\eqref{eq:observer_BC} with $\gamma<0$, the resulting system is positive. 
Thus, for every nonnegative initial state $x_0$ and every nonnegative estimated initial state $\tilde{x}_0$ such that $(x_0 \hspace{0.2cm}\tilde{x}_0)^{\top}\in D(\mathcal{A}_{\gamma})$, the associated estimated state trajectory $\tilde{x}$ is nonnegative when $\gamma<0$. Therefore, the proposed state observer \eqref{eq:pure_diffusion_observer}-\eqref{eq:observer_BC} is well-suited to positive state estimation if, and only if, $\gamma < 0$.

To tune the observer gain $\gamma$ in \eqref{eq:pure_diffusion_observer}-\eqref{eq:observer_BC}, we now introduce a dual problem to $\mathbf{S_{\alpha}}$. More precisely, let us consider, for any $\alpha>0$, the optimal state estimation problem for the diffusion system, denoted $\mathbf{E_{\alpha}}$, given by
\begin{subequations}\label{eq:pbm_opti_est}
    \begin{align}
        \displaystyle \min_{\gamma \,\in \,\mathbb{R}}\quad &\gamma^2, \label{eq:pbm_opti_est_obj}\\
        \text{s.t.}\quad & (T_{A_{0\gamma}}(t))_{t\geq0} \text{ is positive,} \label{eq:pbm_opti_est_const_positivity}\\
        \quad & (T_{SA_{\gamma}S^{-1}}(t))_{t\geq0} \text{ is } \beta\text{-exponentially stable}, \notag\\
        \quad &\,\,\forall \beta\in(-\alpha,0]\,.\label{eq:pbm_opti_est_const_stability}
    \end{align}
\end{subequations}
\begin{rem}\label{rem:motivation_pbm_est_opti}
A state observer of the form~\eqref{eq:pure_diffusion_observer}-\eqref{eq:observer_BC} where $\gamma$ satisfies~\eqref{eq:pbm_opti_est_const_positivity} and~\eqref{eq:pbm_opti_est_const_stability} may be viewed as a constrained high gain (static) state observer. The high gain interpretation comes from the fact that the output error injection term $\gamma(\tilde{y}(t) - y(t))$ is parametrized by a single scalar $\gamma$ and can become arbitrarily large as $|\gamma|\rightarrow \infty$. This kind of observer is widely used in the literature: see \cite{Khalil2008} and references therein or \cite{Kitsos2021}. In these references, it is mentioned that a key feature of high gain type observers is a fast convergence of the estimation error. Here, this effect is sought with condition \eqref{eq:pbm_opti_est_const_stability} and the choice of a sufficiently large $\alpha$ (in the following, we will see that $\alpha$ cannot be driven too high). However, high gain type observers may be subject to the peaking phenomenon; see \cite{Khalil2008}. Numerical simulations that we performed  for the state observer \eqref{eq:pure_diffusion_observer}-\eqref{eq:observer_BC} reveal also this phenomenon. In this context, the optimality criterion~\eqref{eq:pbm_opti_est_obj} aims to prevent this effect.
\end{rem}

Remark \ref{rem:motivation_pbm_est_opti} highlights the fact that $\mathbf{E_{\alpha}}$ represents a trade-off between the advantage offered by a sufficiently high gain state observer (in terms of decay rate of the estimation error dynamics) and the attenuation of the peaking phenomenon, while ensuring nonnegativity of the estimated state trajectories. The solution of $\mathbf{E_{\alpha}}$ is given in the next theorem and follows directly from Theorem~\ref{theorem:design_kappa_opti}.
\begin{thm}\label{theorem:design_gamma_opti}
    Fix~$\alpha>0$. The $\mathbf{E_{\alpha}}$ problem~\eqref{eq:pbm_opti} for the diffusion system \eqref{eq:pure_diffusion}-\eqref{eq:cdb_control} (with $u=0$) is feasible if, and only if, $\alpha <\pi^2 D_a/(4L^2)$. In this case, $\mathbf{E_{\alpha}}$ has a unique solution $\gamma_*$, which satisfies $\gamma_*=\kappa_*$ with $\kappa_*$ given by \eqref{eq:solution}.
\end{thm}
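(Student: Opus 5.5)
The plan is to reduce $\mathbf{E_{\alpha}}$ to $\mathbf{S_{\alpha}}$ by describing the feasible set of $\mathbf{E_{\alpha}}$ in terms of $\gamma$ and showing it coincides with the feasible set of $\mathbf{S_{\alpha}}$ in terms of $\kappa$. The objectives $\gamma^2$ and $\kappa^2$ are the same function of the parameter, so Theorem~\ref{theorem:design_kappa_opti} then gives both the feasibility condition and the unique minimizer.

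\textbf{Stability constraint.} The reflection $S$, $(Sx)(z)=x(L-z)$, is an isometric lattice isomorphism of $\mathrm{L}^2(0,L)$. The error dynamics \eqref{eq:error_dynamics}--\eqref{eq:error_BC} is therefore similar, via $S$, to the closed-loop system with generator $A_{\gamma}$. Indeed, if $e=Sx$, then $\frac{{\rm d}e}{{\rm d}z}(L)=-\frac{{\rm d}x}{{\rm d}z}(0)$ and $e(L)=x(0)$. Hence the condition $\frac{{\rm d}e}{{\rm d}z}(L)-\gamma e(L)=0$ becomes $\frac{{\rm d}x}{{\rm d}z}(0)+\gamma x(0)=0$. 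It follows that $T_{SA_\gamma S^{-1}}(t)=S\,T_{A_\gamma}(t)\,S^{-1}$, and both semigroups have the same growth bound $\omega_0^{\gamma}$. Constraint \eqref{eq:pbm_opti_est_const_stability} is thus equivalent to $\omega_0^\gamma\le-\alpha$, which is exactly constraint \eqref{eq:pbm_opti_const_stability} with $\kappa=\gamma$.

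\textbf{Positivity constraint and reduction.} Proposition~\ref{prop:positivity_A0gamma} shows that \eqref{eq:pbm_opti_est_const_positivity} holds for every $\gamma<0$. Since $\alpha>0$, the stability constraint already forces $\gamma<0$, because the proof of Theorem~\ref{theorem:design_kappa_opti} shows that $\kappa\ge 0$ gives $\omega_0^\kappa\ge 0$. So on the set where the stability constraint holds, the positivity constraint is automatic, and the feasible set of $\mathbf{E_\alpha}$ is $\{\gamma<0:\omega_0^\gamma\le-\alpha\}$. The feasible set of $\mathbf{S_\alpha}$ is the same set, since $(T_{A_\kappa}(t))_{t\ge0}$ is positive for all $\kappa$. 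Consequently $\mathbf{E_\alpha}$ is feasible if and only if $\mathbf{S_\alpha}$ is, that is, if and only if $\alpha<\pi^2D_a/(4L^2)$. In that case the feasible set is $\{\gamma\le\kappa_*\}$ with $\kappa_*<0$, and $\gamma^2$ has the unique minimizer $\gamma_*=\kappa_*$.

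\textbf{Main obstacle.} The one delicate point is that the positivity constraint \eqref{eq:pbm_opti_est_const_positivity} concerns the coupled operator $A_{0\gamma}$, not the error operator, so the two problems are not literally identical. The argument needs that constraint not to cut the feasible set further. Proposition~\ref{prop:positivity_A0gamma} covers every $\gamma<0$, and the positive-$\gamma$ region is already excluded by stability, so no converse positivity analysis for $\gamma\ge0$ is required.
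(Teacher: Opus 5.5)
Your proposal is correct and follows essentially the same route as the paper. You observe that stability forces $\gamma<0$, that Proposition~\ref{prop:positivity_A0gamma} then makes the positivity constraint automatic, and that the feasible sets of $\mathbf{E_{\alpha}}$ and $\mathbf{S_{\alpha}}$ therefore coincide, so Theorem~\ref{theorem:design_kappa_opti} gives the conclusion; your boundary-condition check for the similarity via $S$ simply makes explicit what the paper asserts just before the theorem.
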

\begin{pf}
    Since the $C_0$-semigroup $(T_{A_{0\gamma}}(t))_{t\geq0}$ is positive for all $\gamma<0$ (see Proposition \ref{prop:positivity_A0gamma}), and condition \eqref{eq:pbm_opti_est_const_stability} implies that $\gamma<0$, the feasible set of $\mathbf{E_{\alpha}}$ is equal to the set of the values of $\gamma$ such that \eqref{eq:pbm_opti_est_const_stability} is satisfied. Thus the feasible set of $\mathbf{E_{\alpha}}$ coincides with the one of $\mathbf{S_{\alpha}}$ in Section~\ref{section:exact_feedback}. Consequently, $\mathbf{E_{\alpha}}$ and $\mathbf{S_{\alpha}}$ are equivalent. The statement then follows from~Theorem \ref{theorem:design_kappa_opti}.\qed
\end{pf}
The same type of interpretations in terms of dynamics as those related to Theorem~\ref{theorem:design_kappa_opti} can be adapted to Theorem~\ref{theorem:design_gamma_opti} and to the state estimation context. Moreover, observe that the feasibility conditions of $\mathbf{E_{\alpha}}$, i.e. \eqref{eq:pbm_opti_est_const_positivity}-\eqref{eq:pbm_opti_est_const_stability}, can be interpreted as sufficient conditions for the $\beta$-exponential detectability for all $\beta\in(-\alpha,0]$ of the considered uncontrolled diffusion system with the boundary observation \eqref{eq:output}; see \cite[Section 8.1]{Curtain_Zwart2020}.\\
By using equivalence arguments in the same vein as in Theorem~\ref{theorem:design_gamma_opti}, we obtain  corresponding results for the discretized state dynamics~\eqref{eq:pure_diffusion_discr_control} (with $u=0$) combined with a spatially discretized (using an analogous finite difference scheme) version of the state observer~\eqref{eq:pure_diffusion_observer}-\eqref{eq:observer_BC}. The solution of the $\mathbf{E_{\alpha}}$ problem can also be recovered via a discretization approach very similar to the one used in Section \ref{section:exact_feedback} but directly adapted to the optimal state estimation problem. See~\cite{PiengeonWinkin2026} for more detail.

\section{Observer-based positive stabilization}\label{section:observer_based_positive_stabilization}
In Section~\ref{section:exact_feedback}, we have shown how exact state feedbacks can be designed to ensure exponential stabilization of the diffusion system~\eqref{eq:pure_diffusion}-\eqref{eq:cdb_control} while preserving the nonnegativity of any state trajectory. However, in practice, the full state $x(\cdot,t)$ is not available and typically the state value is known only at a finite number of points or only the mean value of the state on a small subinterval is known. Here, we assume that the measurement is made at the boundary~\eqref{eq:output}. Therefore, in place of the exact state feedback \eqref{eq:state_feedback_pure_diff}, consider the observer-based feedback law for the diffusion system~\eqref{eq:pure_diffusion}-\eqref{eq:cdb_control} given by
\begin{equation}\label{eq:observer_feedback_pure_diff}
    u(t) = \kappa \tilde{x}(0,t)\,, \quad t\geq 0\,,
\end{equation}
where $\kappa \in \mathbb{R}$ denotes the feedback gain and $\tilde{x}(\cdot,t)$ is the estimated state generated by the observer dynamics~\eqref{eq:pure_diffusion_observer} with the boundary conditions
\begin{equation} \label{eq:observer_BC_control}
        \frac{\partial \tilde{x}}{\partial z}(0,t) + u(t)= 0, \quad
        \frac{\partial \tilde{x}}{\partial z}(L,t) - \gamma(\tilde{y}(t) - y(t)) = 0,
\end{equation}
where $y$ (resp. $\tilde{y}$) is the measured (resp. estimated) output~\eqref{eq:output} (resp.~\eqref{eq:estimated_output}) and $\gamma \in \mathbb{R}$ is the observer gain. The dynamics of the state estimation error $e = \tilde{x}-x$ is still governed by \eqref{eq:error_dynamics}-\eqref{eq:error_BC}. Through this whole section, we will make the following \textbf{Standing Assumption:} 
$\kappa \; \neq \; \gamma$.

\subsection{Nonpositivity of the $(x,\tilde{x})$-dynamics}\label{subsection:Nonpositivity}
The objective is to determine whether exponential stabilization of the diffusion system can be achieved via the observer-based state feedback \eqref{eq:observer_feedback_pure_diff} while ensuring the nonnegativity of the state and the estimated state. First, we focus on exponential stability. The combination of the state dynamics and the estimation error dynamics in terms of $x$ and $e$ can be given an infinite-dimensional state-space description of the form
\begin{equation}\label{eq:system_x_e}
    \begin{pmatrix}
        \dot{x}(t)\\\dot{e}(t)
    \end{pmatrix}
    = 
    \tilde{A}_{\kappa\gamma}\begin{pmatrix}
        x(t)\\e(t)
    \end{pmatrix}, 
\end{equation}
where $\tilde{A}_{\kappa\gamma}X=\mathcal{A}X$ for $X\in D(\tilde{A}_{\kappa\gamma})$ and
\begin{equation*}
    \begin{array}{rcl}
        D(\tilde{A}_{\kappa\gamma}) &=& \left\{X:=(x\hspace{0.2cm}e)^{\top}\in \mathrm{H}^2(0,L) \times \mathrm{H}^2(0,L):\right.\\ &&\dfrac{{\rm  d}}{{\rm  d}  z}\begin{pmatrix}x\\e\end{pmatrix}(0) = \begin{pmatrix}
        -\kappa&-\kappa\\0&0\end{pmatrix}\begin{pmatrix}x\\e\end{pmatrix}(0),\\ &&\left.\dfrac{{\rm  d}}{{\rm  d}  z}\begin{pmatrix}x\\e\end{pmatrix}(L) = \begin{pmatrix}
        0&0\\0&\gamma\end{pmatrix}\begin{pmatrix}x\\e\end{pmatrix}(L) \right\}.
    \end{array}
\end{equation*}
\begin{prop}\label{prop:stability_Atilde_kappagamma}
    The operator $\tilde{A}_{\kappa\gamma}$ generates an exponentially stable $C_0$-semigroup $(T_{\tilde{A}_{\kappa\gamma}}(t))_{t\geq 0}$ on $\mathrm{L}^2(0,L) \times \mathrm{L}^2(0,L)$ if, and only if, $\kappa<0$ and $\gamma<0$.
\end{prop}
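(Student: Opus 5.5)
The plan is to exploit the triangular (cascade) structure of $\tilde{A}_{\kappa\gamma}$. The $e$-component is decoupled: its boundary conditions involve only $e$, namely $e'(0)=0$ and $e'(L)=\gamma e(L)$. The $x$-component satisfies $x'(0)+\kappa x(0)=-\kappa e(0)$ and $x'(L)=0$, so it is the closed-loop system of Section~\ref{section:exact_feedback} driven through the boundary at $z=0$ by $e$.

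\emph{Generation.} First I will show that $\tilde{A}_{\kappa\gamma}$ generates a $C_0$-semigroup. The natural route is to write $\tilde{A}_{\kappa\gamma}$ as a boundary perturbation of the diagonal operator $\mathrm{diag}(A_\kappa, SA_\gamma S^{-1})$. Both diagonal blocks generate analytic semigroups (they are self-adjoint Robin Laplacians).

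To handle the coupling, I lift the boundary data. Let $\phi\in C^\infty[0,L]$ satisfy $\phi'(0)+\kappa\phi(0)=-\kappa$ and $\phi'(L)=0$. The map $(x,e)\mapsto(x-e(0)\phi,\,e)$ is then a similarity transform on the domain level. However, it is unbounded on $\mathrm{L}^2$, since point evaluation $e(0)$ is not $\mathrm{L}^2$-bounded.

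Instead, I will argue through the resolvent. Because of the triangular structure, $R(\lambda,\tilde{A}_{\kappa\gamma})$ can be computed explicitly, as in Proposition~\ref{prop:positivity_A0gamma}, via Green's functions:
\[
R(\lambda,\tilde{A}_{\kappa\gamma})=\begin{pmatrix}R(\lambda,A_\kappa) & K(\lambda)\\ 0 & R(\lambda,SA_\gamma S^{-1})\end{pmatrix}.
\]
Here $K(\lambda)$ maps $v$ to $\big(R(\lambda,SA_\gamma S^{-1})v\big)(0)$ times the $\lambda$-harmonic function $w_\lambda$ with $w_\lambda'(0)+\kappa w_\lambda(0)=-\kappa$ and $w_\lambda'(L)=0$.

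Sectorial estimates $\|K(\lambda)\|\lesssim |\lambda|^{-1}$ on a sector will follow from explicit $\cosh/\sinh$ bounds, which gives an analytic semigroup. Alternatively, $\tilde{A}_{\kappa\gamma}$ is associated with a closed sectorial sesquilinear form $\int x'\bar{\xi}'+\int e'\bar{\eta}' +\kappa(x(0)+e(0))\bar{\xi}(0)-\gamma e(L)\bar{\eta}(L)$ on $\mathrm{H}^1\times\mathrm{H}^1$. The boundary terms are form-bounded with relative bound $0$, since trace evaluation satisfies $|u(0)|^2\le\varepsilon\|u'\|^2+C_\varepsilon\|u\|^2$. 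The form route gives analyticity directly, so I will use it. The compact resolvent then follows from $\mathrm{H}^1$ compactly embedding in $\mathrm{L}^2$.

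\emph{Stability.} For an analytic semigroup with compact resolvent, the spectrum consists of eigenvalues and the spectrum determined growth assumption holds. Hence exponential stability is equivalent to $\sup\mathrm{Re}\,\sigma(\tilde{A}_{\kappa\gamma})<0$. By the triangular structure, $\sigma(\tilde{A}_{\kappa\gamma})\subseteq\sigma(A_\kappa)\cup\sigma(SA_\gamma S^{-1})$, as the block resolvent formula above shows.

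For the converse inclusion, let $e$ be an eigenvector of $SA_\gamma S^{-1}$ at $\lambda$. If $\lambda\notin\sigma(A_\kappa)$, one solves for $x$ and obtains an eigenvector of $\tilde{A}_{\kappa\gamma}$. If $\lambda\in\sigma(A_\kappa)$, then $\lambda$ is in the spectrum anyway. Eigenvectors of $A_\kappa$, paired with $e=0$, are eigenvectors of $\tilde{A}_{\kappa\gamma}$. Thus $\sigma(\tilde{A}_{\kappa\gamma})=\sigma(A_\kappa)\cup\sigma(SA_\gamma S^{-1})$.

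Now \cite[Theorem 4]{Dehaye2016} together with the reflection by $S$ discussed in Section~\ref{section:estimation} gives the following: $A_\kappa$ is exponentially stable if and only if $\kappa<0$, and $SA_\gamma S^{-1}$ is exponentially stable if and only if $\gamma<0$. Both operators are self-adjoint with compact resolvent, so this is equivalent to their spectra lying in $(-\infty,-\epsilon]$. The equivalence follows.

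\emph{Main obstacle.} The main obstacle is the generation and spectrum-determined growth step, because the coupling sits in the boundary conditions and is not a bounded perturbation. I expect the sesquilinear-form argument, with the trace inequality used to absorb the non-symmetric term $\kappa e(0)\bar{\xi}(0)$, to settle it cleanly. The Standing Assumption $\kappa\neq\gamma$ should play no role here beyond possibly avoiding Jordan-block subtleties, which do not affect the growth bound.
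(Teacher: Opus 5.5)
Your proof is correct, but it takes a different route from the paper's.

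\textbf{The paper's route.} For $\kappa,\gamma<0$, the paper uses \cite[Lemma 3.2.9]{Curtain_Zwart2020} to show that $\tilde{A}_{\kappa\gamma}$ is Riesz-spectral. It then computes $\sigma(\tilde{A}_{\kappa\gamma})=\sigma(A_\kappa)\cup\sigma(A_\gamma)$, a real set bounded above. Generation and the growth bound both follow from \cite[Theorem 3.2.8]{Curtain_Zwart2020}, so exponential stability reduces to negativity of all eigenvalues.

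\textbf{Your route.} You get an analytic semigroup with compact resolvent from a sectorial form, using the trace inequality to absorb the non-symmetric coupling. You then invoke spectrum-determined growth for analytic semigroups, and identify the spectrum through the block-triangular resolvent.

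\textbf{What your route buys.}
\begin{itemize}
\item It needs neither simple eigenvalues nor a Riesz basis, so it is genuinely independent of the Standing Assumption $\kappa\neq\gamma$. The Riesz-spectral argument does rely on that assumption: for $\kappa=\gamma\neq0$, each common eigenvalue carries a Jordan chain of length two.
\item It gives generation for every real $\kappa,\gamma$. The paper establishes the Riesz-spectral property only for $\kappa,\gamma<0$; its converse direction rests on the eigenvalue computation alone.
\end{itemize}

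\textbf{What the paper's route buys.} It yields a modal, Riesz-basis expansion of the semigroup, and the paper exploits exactly this later. For example, the initial condition \eqref{eq:CI} in Subsection~\ref{subsection:numerical_simulations} is built from eigenfunctions of $A_{\kappa\gamma}$ and their biorthogonal sequence.

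\textbf{A sign slip in your form.} With $a(X,\Xi)=-D_a^{-1}\langle \tilde{A}_{\kappa\gamma}X,\Xi\rangle$ and $x'(0)=-\kappa\,(x(0)+e(0))$, the boundary term at $z=0$ should be $-\kappa\,(x(0)+e(0))\,\bar{\xi}(0)$, not $+\kappa(\ldots)$. Your $\gamma$-term has the correct sign. As written, your form is associated with the operator that has $-\kappa$ in place of $\kappa$. This is harmless: your generation and compact-resolvent argument works for every real $\kappa$, and the stability part relies on \cite[Theorem 4]{Dehaye2016}, not on the form.
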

\begin{pf}
    By using 
    \cite[Lemma 3.2.9]{Curtain_Zwart2020}, it can be shown that, for all $\kappa,\gamma<0$, $\tilde{A}_{\kappa\gamma}$ is a Riesz spectral operator. Moreover, by a straightforward computation of the eigenvalues of $\tilde{A}_{\kappa\gamma}$, one gets that $\sigma(\tilde{A}_{\kappa\gamma}) = \sigma(A_{\kappa}) \cup \sigma(A_{\gamma})$. As mentioned in \cite[Section 5.1]{Hastir2023}, the eigenvalues of $A_{\kappa}$ and $A_{\gamma}$ are real, hence $\sigma(\tilde{A}_{\kappa\gamma})\subseteq\mathbb{R}$. Moreover it is upper bounded. It follows, by \cite[Theorem 3.2.8]{Curtain_Zwart2020}, that $\tilde{A}_{\kappa\gamma}$ generates a $C_0$-semigroup,  
    which is exponentially stable if, and only if,
    $\sigma_p(A_{\kappa})$ and $\sigma_p(A_{\gamma})$ are included in $\mathbb{R}_-\setminus\{0\}$, or equivalently $\kappa<0$ and $\gamma<0$.\qed
\end{pf}
By Proposition \ref{prop:stability_Atilde_kappagamma}, the $(x,\tilde{x})$-dynamics is also exponentially stable if, and only if, $\kappa,\gamma<0$, since the latter may be obtained by a change of variables with respect to the $(x,e)$-dynamics. However, if $\kappa,\gamma<0$, then the $(x,\tilde{x})$-dynamics is not positive, as shown below.
\begin{prop}\label{prop:non_positivity_A_kappagamma}
    For all $\kappa,\gamma<0$, the $C_0$-semigroup $(T_{A_{\kappa\gamma}}(t))_{t\geq 0}$ generated by $A_{\kappa\gamma}$ defined as $A_{\kappa\gamma}X=\mathcal{A}X$ for $X\in D(A_{\kappa\gamma})$, where 
    \begin{equation*}
    \begin{array}{rcl}
            D(A_{\kappa\gamma}) &=& \left\{X:=(x\hspace{0.2cm}\tilde{x})^{\top}\in \mathrm{H}^2(0,L) \times \mathrm{H}^2(0,L):\right.\\ &&\dfrac{{\rm  d}}{{\rm  d}  z}\begin{pmatrix}x\\\tilde{x}\end{pmatrix}(0) = \begin{pmatrix}
            0&-\kappa\\0&-\kappa\end{pmatrix}\begin{pmatrix}x\\\tilde{x}\end{pmatrix}(0),\\ &&\left.\dfrac{{\rm  d}}{{\rm  d}  z}\begin{pmatrix}x\\\tilde{x}\end{pmatrix}(L) = \begin{pmatrix}
            0&0\\-\gamma&\gamma\end{pmatrix}\begin{pmatrix}x\\\tilde{x}\end{pmatrix}(L) \right\}, 
    \end{array}
    \end{equation*}
    is not positive.
\end{prop}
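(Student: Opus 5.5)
To prove that $(T_{A_{\kappa\gamma}}(t))_{t\geq0}$ is not positive for $\kappa,\gamma<0$, I will use the resolvent characterization already invoked in the proof of Proposition \ref{prop:positivity_A0gamma}. By \cite[Corollary 11.4]{Batkai2017}, a $C_0$-semigroup is positive if, and only if, $R(\lambda,A_{\kappa\gamma})\geq0$ for all sufficiently large real $\lambda$. It therefore suffices to exhibit a nonnegative $X=(x\;\tilde{x})^{\top}$ and, for every large $\lambda$, a point $z$ where one component of $R(\lambda,A_{\kappa\gamma})X$ is negative.

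To compute the resolvent, fix $\lambda>0$ large and set $\mu=\sqrt{\lambda D_a^{-1}}$. Solving $(\lambda-A_{\kappa\gamma})(w,\tilde{w})=(x,\tilde{x})$ gives two ODEs of the form $\lambda w-D_a w''=x$ and $\lambda\tilde w-D_a\tilde w''=\tilde x$. They are coupled only through the boundary conditions
\[
w'(0)=-\kappa\tilde w(0),\qquad w'(L)=0,\qquad \tilde w'(0)=-\kappa\tilde w(0),\qquad \tilde w'(L)=\gamma(\tilde w(L)-w(L)).
\]
The simplest test input is $X=(0\;\tilde{x})^{\top}$ with $\tilde{x}\geq0$, for instance $\tilde x=\mathbb{1}_{[0,L]}$ or a function concentrated near $z=0$. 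The sign question then reduces to the first component $w$. This component solves the homogeneous equation $\lambda w-D_aw''=0$ with $w'(L)=0$, so $w(z)=C\cosh(\mu(L-z))$. The remaining condition $w'(0)=-\kappa\tilde w(0)$ yields
\[
-C\mu\sinh(\mu L)=-\kappa\tilde w(0),\qquad\text{that is,}\qquad C=\frac{\kappa\,\tilde w(0)}{\mu\sinh(\mu L)}.
\]
Since $\kappa<0$, $w$ is strictly negative on $[0,L]$ as soon as $\tilde w(0)>0$. This matches the physical intuition: a positive estimate at $z=0$ produces a negative control $u=\kappa\tilde x(0)$, which drives the true state below zero when the true state is initially zero.

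The main obstacle is proving $\tilde w(0)>0$ for large $\lambda$. Here $\tilde w$ still depends on $w(L)$ through the boundary term at $L$, so the pair must be solved jointly. I will write $\tilde w=\tilde w_p+a\cosh(\mu z)+b\cosh(\mu(L-z))$, where $\tilde w_p$ is a particular solution. With the choice $\tilde x\equiv1$, one can take $\tilde w_p=1/\lambda$. Substituting $w(L)=C=\kappa\tilde w(0)/(\mu\sinh(\mu L))$ turns the boundary conditions into a $2\times2$ linear system for $(a,b)$, with $\tilde w(0)$ entering linearly. I expect that solving it and letting $\mu\to\infty$ will show $\tilde w(0)\sim 1/\lambda>0$. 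The boundary feedback corrections should be only of order $O(|\kappa|/\mu)$ relative to the leading term, since the Robin coefficients $\kappa$ and $\gamma$ are fixed while $\mu$ grows.

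If this asymptotic bookkeeping becomes messy, I will use a cleaner alternative: by \cite[Corollary 11.4]{Batkai2017}, positivity requires $\lambda R(\lambda,A_{\kappa\gamma})X\geq0$ for large $\lambda$, and $\lambda R(\lambda,A_{\kappa\gamma})X\to X$ in $\mathrm{L}^2$. Hence $\tilde w(0)>0$ can be obtained from convergence of $\lambda\tilde w$ to $\tilde x$, upgraded to pointwise convergence at $z=0$ by choosing a smooth $\tilde x\in D(A_{\kappa\gamma})$-compatible input. Either route gives $w<0$, so $R(\lambda,A_{\kappa\gamma})X\not\geq0$ for every large $\lambda$, and the semigroup is not positive.
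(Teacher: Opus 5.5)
Your route differs from the paper's and can be made to work, but as written its decisive step is only conjectured, and the fallback you offer for it fails. Everything hinges on $\tilde w(0)>0$, which you leave at ``I expect''.

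The alternative via $\lambda R(\lambda,A_{\kappa\gamma})X\to X$ cannot supply this sign. An input $(0\;\;\tilde x)^{\top}\in D(A_{\kappa\gamma})$ must satisfy $0=x'(0)=-\kappa\tilde x(0)$, so $\tilde x(0)=0$. Pointwise convergence at $z=0$ then only gives $\lambda\tilde w(0)\to0$, which carries no sign information.

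The clean fix is to pass to the error variable $e:=\tilde w-w$. Both components obey the same condition at $z=0$, so $\lambda e-D_ae''=\tilde x$, $e'(0)=0$, $e'(L)=\gamma e(L)$. Thus $e=R(\lambda,SA_{\gamma}S^{-1})\tilde x$ is decoupled from $w$. For $\tilde x\equiv1$ one finds $e(z)=\frac{1}{\lambda}\bigl(1+\frac{\gamma\cosh(\mu z)}{\mu\sinh(\mu L)-\gamma\cosh(\mu L)}\bigr)$, hence
\[
e(0)=\frac{1}{\lambda}\,\frac{\mu\sinh(\mu L)+|\gamma|(\cosh(\mu L)-1)}{\mu\sinh(\mu L)+|\gamma|\cosh(\mu L)}>0 .
\]
Substituting $w(0)=\kappa\tilde w(0)\cosh(\mu L)/(\mu\sinh(\mu L))$ into $\tilde w(0)=w(0)+e(0)$ gives
\[
\tilde w(0)\Bigl(1+\frac{|\kappa|\cosh(\mu L)}{\mu\sinh(\mu L)}\Bigr)=e(0)>0 .
\]
So $\tilde w(0)>0$, and therefore $w<0$ on $[0,L]$, for every $\lambda>0$. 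No asymptotics are needed, and $\tilde w(0)\sim1/\lambda$ indeed holds, as you guessed.

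With this fix in place, compare with the paper. The paper evaluates the resolvent at $\lambda=0$. It may do so only because $A_{\kappa\gamma}$ is exponentially stable, so a positive semigroup would give $R(0,A_{\kappa\gamma})=\int_0^\infty T_{A_{\kappa\gamma}}(t)\,{\rm d}t\geq0$. At $\lambda=0$ the Green kernel is piecewise polynomial, and the tailored constant input $\bar X=-(\kappa\;\;\kappa+\gamma)^{\top}$ yields first component $\gamma L^2/2<0$ at $z=0$.

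Your version needs no stability information and shows $R(\lambda,A_{\kappa\gamma})\not\geq0$ for every $\lambda>0$. It also isolates the actual mechanism: zero true state and nonnegative estimate give a negative control $\kappa\tilde x(0,t)$. The paper invokes this only informally later, when it notes that $x_0=0$, $e_0\geq0$ drives the state negative. The paper's computation is shorter but less explanatory.
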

\begin{pf}
    Let $\kappa,\gamma< 0$. First, note that $A_{\kappa\gamma}$ generates an (exponentially stable) $C_0$-semigroup $(T_{A_{\kappa\gamma}}(t))_{t\geq 0}$ on $\mathrm{L}^2(0,L) \times \mathrm{L}^2(0,L)$ because so does $\tilde{A}_{\kappa\gamma}$.
    Using a routine computation, one gets that  
    the resolvent operator of $A_{\kappa\gamma}$ at zero 
    is given, for all $X:=(x\hspace{0.2cm}\tilde{x})^{\top}\in \mathrm{L}^2(0,L)\times \mathrm{L}^2(0,L)$ and $z\in[0,L]$, by
    \begin{align*}
            &(R(0,A_{\kappa\gamma})X)(z)\\
            &= \begin{pmatrix}
           \int_0^L  f(z,\zeta)x(\zeta)+ (\frac{1}{\gamma}-L+\zeta)\tilde{x}(\zeta) \,\rm{d}\zeta \\
           \int_0^L (z-\frac{1}{\kappa})x(\zeta) -(z-\zeta)\mathbb{1}_{[0,z]}(\zeta)\tilde{x}(\zeta)  \,\rm{d}\zeta 
        \end{pmatrix},
    \end{align*}
    where $\displaystyle  f(z,\zeta) := z-\frac{\kappa+\gamma}{\kappa\gamma}+L-\zeta-(z-\zeta)\mathbb{1}_{[0,z]}(\zeta),$
    for all $\zeta\in[0,L]$. Now  consider the constant function $ \bar{X}:[0,L]\rightarrow\mathbb{R}^2$ given by 
    \begin{equation}\label{eq:counter_example}
            \bar{X}(z) :=-
    \begin{pmatrix}
        \kappa &\:\: \kappa+\gamma
    \end{pmatrix}^{\top} 
    \end{equation}
    and observe that $\bar{X}\in (\mathrm{L}^2(0,L) \times \mathrm{L}^2(0,L))_+$. Evaluating $R(0,A_{\kappa\gamma})\bar{X}$ at zero
    yields $(R(0,A_{\kappa\gamma})\bar{X})(0) = (\frac{\gamma L^2}{2}\hspace{0.2cm}L)^{\top}$
    and so, by a continuity argument,  the first component of $R(0,A_{\kappa\gamma})\bar{X}$ is negative on a small interval $[0, \eta ] \subset [0,L].$ Hence, $R(0,A_{\kappa\gamma})\bar{X}
    \notin (\mathrm{L}^2(0,L) \times \mathrm{L}^2(0,L))_+$.
    The conclusion follows by \cite[Corollary 11.4]{Batkai2017}.\qed
\end{pf}
A consequence of Propositions \ref{prop:stability_Atilde_kappagamma} and \ref{prop:non_positivity_A_kappagamma} is that, while exponential stability of the $(x,e)$-dynamics \eqref{eq:system_x_e} is ensured with $\kappa,\gamma<0$, positivity of both $x$ and $\tilde{x}$ cannot be guaranteed simultaneously for all nonnegative initial conditions $(x_0\hspace{0.2cm}\tilde{x}_0)^{\top}\in D(A_{\kappa\gamma})$. This kind of property is known for infinite-dimensional linear systems with bounded control and observation operators,  
see \cite[Theorem 3.1]{Binid2021}. Here such result is extended to a diffusion model with boundary control and observation.

\subsection{Positively stabilizing observer-based state feedbacks}\label{subsection:Cones_design}
As a trade-off, the idea developed in what follows consists in removing the nonnegativity constraint of the estimated state trajectory $\tilde{x}$ and looking for conditions under which only the state trajectory $x(\cdot,t)$ remains nonnegative. In the end, this is the main design objective in a control context, together with the exponential stability of the $(x,e)$-dynamics.

To find the desired conditions, we focus on the $(x,e)$-dynamics because one of the two parts, i.e., the dynamics of the error, is independent of the other part, i.e, the dynamics of the state. This property will be helpful in the following. Observe now that the state trajectory $x$, issued from the $(x,e)$-dynamics, is entirely determined by $\kappa$, $\gamma$, the initial state $x_0$, and the initial state estimation error $e_0$. In a state estimation context, $x_0$ is supposed to be unknown, and so no condition (apart from nonnegativity) should be imposed on $x_0$.
Moreover, for all $\kappa,\gamma<0$, we can always find an initial estimation error $e_0$ and a nonnegative initial state $x_0$ from which the state trajectory becomes negative 
at some time. Indeed, using some arguments of the proof of Lemma \ref{lemma:conditions_positive_state_PDE} below, we can show that it suffices to take $x_0=0$ and $e_0\in L^2(0,L)_+\setminus\{0\}$.
Therefore, in the following, the goal is to find, for all $\kappa,\gamma<0$, which initial estimation error $e_0$ (and therefore which $\tilde{x}_0$) can be committed such that, whatever the nonnegative initial state $x_0$ that is being observed, the resulting state trajectory $x$ is nonnegative. Such $e_0$ will be called "suitable" in the following.  

It is straightforward to show that any nonpositive $e_0$ is suitable for all $\kappa,\gamma<0$; see Lemma \ref{lemma:conditions_positive_state_PDE}. However, this nonpositivity condition is very restrictive. Indeed, recall that the estimation error dynamics is positive for all $\gamma\in\mathbb{R}$ (see Section \ref{section:estimation}), and so taking $e_0\leq0$ amounts to underestimating the full state, that is $\tilde{x}(t)\leq x(t)$ for all $t\geq0$. Starting from this observation, in the following, we will show how to construct nontrivial initial estimation errors for which the nonnegativity property of the state trajectory $x$, issued from the $(x,e)$-dynamics, is satisfied. 

Once again, we will first consider this problem for a discretized version of the $(x,e)$-dynamics. Applying the same finite difference schemes than in Sections \ref{section:model} and \ref{section:exact_feedback} to \eqref{eq:pure_diffusion}-\eqref{eq:cdb_control} combined with \eqref{eq:error_dynamics}-\eqref{eq:error_BC} and $u$ given by \eqref{eq:observer_feedback_pure_diff}, leads to 
\begin{equation}\label{eq:x_e_dyn_discr}
    \left\{
    \begin{aligned}
        \dot{x}^{(n)}(t) &= A^{(n)}_{\kappa}x^{(n)}(t)+b^{(n)}\kappa {\mathbb{e}_1^{(n)}}^{\top} e^{(n)}(t), \\
        \dot{e}^{(n)}(t) &= S^{(n)}A^{(n)}_{\gamma}S^{(n)}e^{(n)}(t),
    \end{aligned}
    \right.
\end{equation}
where $S^{(n)}={S^{(n)}}^{-1}$ is the permutation matrix with $1$’s on the anti-diagonal and $0$’s elsewhere, and the state trajectories of~\eqref{eq:x_e_dyn_discr}
are defined by $x^{(n)}(\cdot) := (x(z_1,\cdot) \, \cdots \, x(z_n,\cdot))^{\top} \in {\mathbb R}^n,$ and $e^{(n)}(\cdot) := (e(z_1,\cdot) \, \cdots \, e(z_n,\cdot))^{\top} \in {\mathbb R}^n$.
The associated estimated state trajectory $\tilde{x}^{(n)}$ is defined by $\tilde{x}^{(n)}:= x^{(n)}+e^{(n)}$, and so \eqref{eq:x_e_dyn_discr} is equivalent to 
\begin{equation}\label{eq:x_xtilde_dyn_discr}
    \left\{
    \begin{aligned}
        &\dot{x}^{(n)}(t) = A^{(n)}x^{(n)}(t)+b^{(n)}\kappa {\mathbb{e}_1^{(n)}}^{\top} \tilde{x}^{(n)}(t), \\
         &\begin{aligned}\dot{\tilde{x}}^{(n)}(t)=(A^{(n)}_{\kappa}+&\gamma p_1\mathbb{e}_n^{(n)}{c^{(n)}}^{\top})\tilde{x}^{(n)}(t) \\ &-\gamma p_1{\mathbb{e}_n^{(n)}}{c^{(n)}}^{\top}x^{(n)}(t).
        \end{aligned}
    \end{aligned}
    \right.
\end{equation}

Properties that are similar to those established for the nominal $(x,e)$ and $(x,\tilde{x})$-dynamics hold. Indeed, in view of  Remark \ref{rem:A_kappa_property}, the $(x^{(n)},e^{(n)})$-dynamics \eqref{eq:x_e_dyn_discr} is exponentially stable if, and only if, $\kappa,\gamma<0$, and the $(x^{(n)},\tilde{x}^{(n)})$-dynamics \eqref{eq:x_xtilde_dyn_discr} is positive if, and only if, $\kappa \geq 0$ and $\gamma \leq 0$. Hence, as for the nominal PDE model, it is impossible to positively exponentially stabilize the discretized diffusion system thanks to an observer-based state feedback of the form $u(t) = \kappa {\mathbb{e}_1^{(n)}}^{\top} \tilde{x}^{(n)}(t)$ related to a positive state observer. Note that this result is actually valid for any finite-dimensional LTI positive system with an observer-based feedback; see~\cite[Theorem 5.1]{Rami2011}. 

As a compromise, we now remove the nonnegativity constraint of the estimated state $\tilde{x}^{(n)}$. We can easily show that, for all $\kappa, \gamma<0$, the set of initial state estimation errors $e_0^{(n)}$ leading to a nonnegative state trajectory $x^{(n)}$ of \eqref{eq:x_e_dyn_discr}, includes $\mathbb{R}_-^n$. However, there are many other suitable $e_0^{(n)}$, as shown in the next result. Note that Lemma \ref{lemma:conditions_positive_state} is instrumental for Theorem \ref{theorem:error_inP_for_positivity}$\beta$.
\begin{lem}\label{lemma:conditions_positive_state}
Let $\gamma\in\mathbb{R}$ and consider the cone 
\begin{align}
    \mathcal{E}^{(n)}:=\bigg\{&e^{(n)}\in\mathbb{R}^n: \forall t\geq0\,,\notag\\
    &\left(\sum_{i=1}^n e^{\lambda_i^{(n)} t}(v_i^{(n)})_1 v_i^{(n)}\right)^{\top}e^{(n)} \leq 0\bigg\},\label{eq:cone_Cn}
\end{align}
where $\lambda_i^{(n)}$ (resp. $v_i^{(n)}$), $i=1,\dots,n$, are the eigenvalues (resp. normalized eigenvectors) of $S^{(n)}A^{(n)}_{\gamma}S^{(n)}$. For all $\kappa < 0$, and every initial estimation error $e_0^{(n)} \in \mathcal{E}^{(n)}$, the state trajectory $x^{(n)}$ generated by the $(x^{(n)},e^{(n)})$-dynamics \eqref{eq:x_e_dyn_discr} with initial condition $(x_0^{(n)}\hspace{0.2cm}e_0^{(n)})^{\top}$ is nonnegative for all nonnegative initial states $x_0^{(n)}$, that is, for all $x_0^{(n)}\geq0$ and for all $t \geq 0$, $x^{(n)}(t)\geq 0$ .  \\
Moreover, if $\gamma<0$, then $\mathbb{R}_-^n\subsetneq \mathcal{E}^{(n)}$.
\end{lem}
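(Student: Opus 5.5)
The plan is to write the state trajectory by the variation of constants formula and reduce nonnegativity of $x^{(n)}$ to a sign condition on the scalar $ {\mathbb{e}_1^{(n)}}^{\top}e^{(n)}(s)$. The second part then amounts to building an explicit element of $\mathcal{E}^{(n)}$ with a positive entry.

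\emph{First part.} Set $M:=S^{(n)}A^{(n)}_{\gamma}S^{(n)}$. Since $A^{(n)}_\gamma$ is symmetric and $S^{(n)}$ is a symmetric permutation matrix, $M$ is symmetric. Hence $e^{Ms}=\sum_{i=1}^n e^{\lambda_i^{(n)}s}v_i^{(n)}{v_i^{(n)}}^{\top}$ with orthonormal eigenvectors. The error equation in \eqref{eq:x_e_dyn_discr} is autonomous, so $e^{(n)}(s)=e^{Ms}e_0^{(n)}$ and
\begin{equation*}
{\mathbb{e}_1^{(n)}}^{\top}e^{(n)}(s)=\Big(\sum_{i=1}^n e^{\lambda_i^{(n)}s}(v_i^{(n)})_1v_i^{(n)}\Big)^{\top}e_0^{(n)} .
\end{equation*}
Thus $e_0^{(n)}\in\mathcal{E}^{(n)}$ is exactly the condition ${\mathbb{e}_1^{(n)}}^{\top}e^{(n)}(s)\le 0$ for all $s\ge0$. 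Next, the variation of constants formula gives
\begin{equation*}
x^{(n)}(t)=e^{A^{(n)}_\kappa t}x_0^{(n)}+\int_0^t e^{A^{(n)}_\kappa(t-s)}b^{(n)}\,\kappa\,{\mathbb{e}_1^{(n)}}^{\top}e^{(n)}(s)\,{\rm d}s .
\end{equation*}
By Remark \ref{rem:A_kappa_property}, $A^{(n)}_\kappa$ is Metzler, so $e^{A^{(n)}_\kappa \tau}\ge0$ entrywise for $\tau\ge0$. Moreover $b^{(n)}\ge0$, and $\kappa<0$ together with ${\mathbb{e}_1^{(n)}}^{\top}e^{(n)}(s)\le0$ makes the scalar factor nonnegative. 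Both terms are therefore nonnegative for every $x_0^{(n)}\ge0$.

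\emph{Second part, inclusion.} The matrix $M$ is Metzler, being a permutation similarity of the Metzler matrix $A^{(n)}_\gamma$. Hence $e^{Ms}\ge0$, and for $e_0^{(n)}\le0$ we get ${\mathbb{e}_1^{(n)}}^{\top}e^{Ms}e_0^{(n)}\le0$. This gives $\mathbb{R}^n_-\subseteq\mathcal{E}^{(n)}$.

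\emph{Second part, strictness.} I will try $e_0^{(n)}=-\mathbb{e}_1^{(n)}+\varepsilon\,\mathbb{e}_n^{(n)}$ with $\varepsilon>0$, which is not in $\mathbb{R}^n_-$. Membership in $\mathcal{E}^{(n)}$ means $\varepsilon\,(e^{Ms})_{1n}\le (e^{Ms})_{11}$ for all $s\ge0$. Two facts are needed.
\begin{itemize}
\item[(i)] $(e^{Ms})_{11}>0$ for all $s$. This follows by writing $e^{Ms}=e^{-cs}e^{(M+cI)s}$ with $M+cI\ge0$, which gives $e^{Ms}\ge e^{-cs}I$.
\item[(ii)] The ratio $r(s):=(e^{Ms})_{1n}/(e^{Ms})_{11}$ is bounded on $[0,\infty)$.
\end{itemize}
For (ii), $r$ is continuous with $r(0)=0$. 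The matrix $M$ is tridiagonal with positive off-diagonal entries $p_2$, hence irreducible. By Perron--Frobenius its largest eigenvalue $\lambda_1^{(n)}$ is simple, with an eigenvector $v_1^{(n)}$ having strictly positive entries. Consequently $e^{-\lambda_1^{(n)}s}e^{Ms}\to v_1^{(n)}{v_1^{(n)}}^{\top}$ and $r(s)\to (v_1^{(n)})_n/(v_1^{(n)})_1<\infty$. So $r$ is bounded by some $C>0$, and any $\varepsilon\le 1/C$ works.

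The main obstacle is establishing this bound on the ratio, specifically controlling the large-time behaviour through the simplicity and positivity of the Perron eigenvector. The hypothesis $\gamma<0$ is not essential to this argument, but it is harmless.
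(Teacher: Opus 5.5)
Your proof is correct. The first part and the inclusion $\mathbb{R}^n_-\subseteq\mathcal{E}^{(n)}$ follow the paper. The paper cites the Farina--Rinaldi positivity theorem with input $\kappa\,{\mathbb{e}_1^{(n)}}^\top e^{(n)}(t)\ge0$ instead of writing out the variation-of-constants formula, but the argument is the same.

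The strictness part is where you take a genuinely different route. The paper builds no witness inside the lemma. It defers to Theorem~\ref{theorem:error_inP_for_positivity}$\alpha$, which combines two facts:
\begin{itemize}
\item the inclusion $\mathcal{F}^{(n)}\subseteq\mathcal{E}^{(n)}$, proved via dual cones and augmented Vandermonde determinants;
\item the fact that a signed eigenvector $c\,v_j^{(n)}$ with $c\,(v_j^{(n)})_1\le0$ lies in $\mathcal{F}^{(n)}\setminus\mathbb{R}^n_-$.
\end{itemize}
That route uses that the $\lambda_i^{(n)}$ are negative and increasingly ordered, and this is where $\gamma<0$ enters.

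Your witness is $-\mathbb{e}_1^{(n)}+\varepsilon\,\mathbb{e}_n^{(n)}$. You control it through irreducibility of $M:=S^{(n)}A^{(n)}_\gamma S^{(n)}$, the strictly positive Perron eigenvector, and the finite large-time limit of $(e^{Ms})_{1n}/(e^{Ms})_{11}$. This makes the lemma self-contained, with no forward reference. It also shows that the strict inclusion holds for every $\gamma\in\mathbb{R}$, so your remark that $\gamma<0$ is inessential is right. The cost is a Perron--Frobenius and asymptotic argument. The paper's route, in exchange, obtains strictness as a by-product of the implementable polyhedral inner approximation $\mathcal{F}^{(n)}$.

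An even shorter $\gamma$-independent witness is implicit in the paper's choice. Take any non-Perron eigenvector $v_j^{(n)}$; being orthogonal to the positive Perron vector, it has entries of both signs. For $c\neq0$ with $c\,(v_j^{(n)})_1\le0$, one gets directly ${\mathbb{e}_1^{(n)}}^\top e^{Ms}\,c\,v_j^{(n)}=c\,e^{\lambda_j^{(n)}s}(v_j^{(n)})_1\le0$ for all $s\ge0$.

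A minor notational point: you call the largest eigenvalue $\lambda_1^{(n)}$, while the paper later orders eigenvalues increasingly, so there it is $\lambda_n^{(n)}$.
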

\begin{pf}
Let $\kappa<0$ and $\gamma\in\mathbb{R}$. Thanks to the symmetry of the matrix $S^{(n)}A^{(n)}_{\gamma}S^{(n)}$, the vectors $v_i^{(n)}$, $i=1,...,n$, form an orthonormal basis of $\mathbb{R}^n$. So, the estimation error $e^{(n)}(t)$ of \eqref{eq:x_e_dyn_discr} is given for any $e_0^{(n)}\in\mathbb{R}^n$ by 
    \begin{equation}\label{eq:dec_mod_e^n}
        e^{(n)}(t) = \sum_{i=1}^n {v_i^{(n)}}^{\top}e_0^{(n)}e^{\lambda_i^{(n)} t} v_i^{(n)}.
    \end{equation}
    Now assume that $e_0^{(n)}\in \mathcal{E}^{(n)}$. Then, 
    \begin{equation*}
        \kappa {\mathbb{e}_1^{(n)}}^{\top} e^{(n)}(t) = \kappa(e^{(n)}(t))_1\geq0
    \end{equation*}
    holds for all $t\geq0$, since $\kappa<0$. Therefore, by applying \cite[Part I, Theorem 2]{Farina2000} to the closed-loop LTI system $(A^{(n)}_{\kappa},b^{(n)},0^{\top})$, where $A^{(n)}_{\kappa}$ is Metzler and $b^{(n)}\geq0$, and by fixing $u(t)=\kappa {\mathbb{e}_1^{(n)}}^{\top} e^{(n)}(t)$ for all~$t\geq0$, it follows that $x^{(n)}(t)\geq 0$ for all $x_0^{(n)}\geq0$ and $t \geq 0$. \\
    Now assume that $e_0^{(n)}\in\mathbb{R}_-^n$. Then, \begin{equation}\label{eq:error_traj_nonpositive}
        e^{(n)}(t) = e^{S^{(n)}A^{(n)}_{\gamma}S^{(n)}t}e_0^{(n)}\leq0
    \end{equation}
    for all $t\geq0$, since $S^{(n)}A^{(n)}_{\gamma}S^{(n)}$ is Metzler. Inequality \eqref{eq:error_traj_nonpositive} combined with \eqref{eq:dec_mod_e^n} ensures that $e_0^{(n)}\in \mathcal{E}^{(n)}$, and so $\mathbb{R}_-^n\subseteq \mathcal{E}^{(n)}$. 
    If $\gamma<0$, the fact that $\mathbb{R}_-^n\neq \mathcal{E}^{(n)}$ is an immediate consequence of \eqref{eq:-P subset -C} in Theorem \ref{theorem:error_inP_for_positivity}$\alpha$ below. 
    \qed
\end{pf}
For all $\kappa<0$ and $\gamma\in\mathbb{R}$, the cone $\mathcal{E}^{(n)}$ characterizes admissible initial estimation errors $e_0^{(n)}$ that preserve nonnegativity of the state trajectories $x^{(n)}$ of \eqref{eq:x_e_dyn_discr}. It has the advantage of not being restricted to $\mathbb{R}_-^n$ when $\gamma<0$. However, practically, it is neither easy nor meaningful to build the elements of $\mathcal{E}^{(n)}$ because it requires checking vector inequalities for all $t\geq0$. This motivates the following result.
\begin{thm}\label{theorem:error_inP_for_positivity}
$\alpha$) For all $\gamma < 0$, 
\begin{equation}\label{eq:-P subset -C}
    \mathcal{F}^{(n)} \subseteq \mathcal{E}^{(n)} \quad \text{and}\quad \mathcal{F}^{(n)}\setminus\mathbb{R}_-^n \neq\emptyset,
\end{equation}
where $\mathcal{E}^{(n)}$ is the cone given in \eqref{eq:cone_Cn} and $\mathcal{F}^{(n)}$ denotes the polyhedral cone 
\begin{subequations}\label{eq:cone_Pn}
    \begin{align}
        \mathcal{F}^{(n)} :=& \left\{e^{(n)}\in\mathbb{R}^n :-{P^{(n)}}^{\top}e^{(n)}\in\mathbb{R}^n_{+}\right\}\label{eq:cone_Pn_checking}\\
        =&\left\{-{\left({P^{(n)}}^{\top}\right)}^{-1}a\,:a\in\mathbb{R}^n_{+}\right\}, \label{eq:cone_Pn_implementation}
    \end{align}
\end{subequations}
where
\begin{equation*}
    P^{(n)}:= \begin{pmatrix}
        p_1^{(n)} & \cdots &  p_n^{(n)}
    \end{pmatrix}\in\mathbb{R}^{n\times n},
\end{equation*}
and, for all $i=1,\dots,n$,
\begin{equation*}\label{eq:def_pi}
    p_i^{(n)} :=\sum_{k=i}^n\left[\prod_{j=1}^{i-1}\left(1-\frac{\lambda_k^{(n)}}{\lambda_j^{(n)}}\right)\right](v_k^{(n)})_1v_k^{(n)},
\end{equation*}
and $\lambda_i^{(n)}$ (resp. $v_i^{(n)}$) are the eigenvalues (resp. normalized eigenvectors) of $S^{(n)}A^{(n)}_{\gamma}S^{(n)}$. Assume that the eigenvalues $\lambda_i^{(n)}$, $i=1,\dots,n$, are ordered increasingly, i.e. $\lambda_{i}<\lambda_{i+1}$, for all $i=1,\dots,n$.\\
$\beta$) For all $\kappa<0$ and $\gamma < 0$, and for every initial estimation error $e_0^{(n)} \in \mathcal{F}^{(n)}\cup\mathbb{R}_-^n$, the state trajectory $x^{(n)}$ generated by the $(x^{(n)},e^{(n)})$-dynamics \eqref{eq:x_e_dyn_discr} with initial condition $(x_0^{(n)}\hspace{0.2cm}e_0^{(n)})^{\top}$ is nonnegative for all nonnegative initial states $x_0^{(n)}$.
\end{thm}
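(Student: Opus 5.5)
The plan is to show directly that every $e^{(n)}\in\mathcal{F}^{(n)}$ makes the scalar function
\[
h(t):=\Big(\sum_{i=1}^n e^{\lambda_i^{(n)}t}(v_i^{(n)})_1v_i^{(n)}\Big)^{\top}e^{(n)}=\sum_{k=1}^n c_k e^{\lambda_k^{(n)}t},\qquad c_k:=(v_k^{(n)})_1\,{v_k^{(n)}}^{\top}e^{(n)},
\]
nonpositive on $[0,\infty)$. Part $\beta$) then follows at once from Lemma~\ref{lemma:conditions_positive_state}, since $\mathcal{E}^{(n)}\supseteq\mathcal{F}^{(n)}\cup\mathbb{R}^n_-$.

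\textbf{Step 1: sign of the eigenvalues.} Since $\gamma<0$, the matrix $S^{(n)}A^{(n)}_{\gamma}S^{(n)}$ is similar to $A^{(n)}_\gamma$. By Remark~\ref{rem:A_kappa_property} it is therefore stable, so $\lambda_1^{(n)}<\dots<\lambda_n^{(n)}<0$ and all the factors $1-\lambda_k^{(n)}/\lambda_j^{(n)}$ are well defined.

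\textbf{Step 2: the auxiliary functions.} For $i=1,\dots,n$ I introduce
\[
g_i(t):=\sum_{k=i}^n c_k\prod_{j=1}^{i-1}\Big(1-\frac{\lambda_k^{(n)}}{\lambda_j^{(n)}}\Big)e^{\lambda_k^{(n)}t}.
\]
These satisfy $g_1=h$ and $g_i(0)={p_i^{(n)}}^{\top}e^{(n)}$. A one-line check gives the recursion
\[
g_{i+1}=g_i-\frac{g_i'}{\lambda_i^{(n)}},\qquad\text{i.e.}\qquad g_i'-\lambda_i^{(n)}g_i=-\lambda_i^{(n)}g_{i+1}.
\]
Here the term $k=i$ drops out because its new factor $1-\lambda_i^{(n)}/\lambda_i^{(n)}$ vanishes.

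\textbf{Step 3: backward induction.} Membership $e^{(n)}\in\mathcal{F}^{(n)}$ means exactly $g_i(0)\le0$ for all $i$. The base case is $g_n(t)=g_n(0)e^{\lambda_n^{(n)}t}\le0$. For the inductive step, assume $g_{i+1}\le0$ on $[0,\infty)$. Since $-\lambda_i^{(n)}>0$, the recursion gives $\big(e^{-\lambda_i^{(n)}t}g_i(t)\big)'=-\lambda_i^{(n)}e^{-\lambda_i^{(n)}t}g_{i+1}(t)\le0$. Hence $g_i(t)\le e^{\lambda_i^{(n)}t}g_i(0)\le0$. Descending to $i=1$ yields $h\le0$, that is, $\mathcal{F}^{(n)}\subseteq\mathcal{E}^{(n)}$.

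\textbf{Step 4: the equality of the two descriptions of $\mathcal{F}^{(n)}$.} This requires $P^{(n)}$ to be invertible. I factor
\[
P^{(n)}=V\,\mathrm{diag}\big((v_k^{(n)})_1\big)\,M,\qquad V:=\big(v_1^{(n)}\ \cdots\ v_n^{(n)}\big),
\]
where $M$ is lower triangular with diagonal entries $\prod_{j<i}(1-\lambda_i^{(n)}/\lambda_j^{(n)})\neq0$; these are nonzero because the eigenvalues are distinct. The matrix $V$ is orthogonal. Finally $(v_k^{(n)})_1\neq0$ by the classical property of eigenvectors of irreducible symmetric tridiagonal (Jacobi) matrices: a vanishing first component would, through the three-term recurrence, force the whole eigenvector to vanish.

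\textbf{Step 5: $\mathcal{F}^{(n)}\setminus\mathbb{R}^n_-\neq\emptyset$.} This is the step I expect to need care. The top eigenvector $v_n^{(n)}$ is the Perron vector of the irreducible Metzler matrix, so it can be taken positive, and $(v_n^{(n)})_1>0$. I consider
\[
e^{(n)}=-v_n^{(n)}+s\,v_{n-1}^{(n)},\qquad\text{with } s\,(v_{n-1}^{(n)})_1<0.
\]
By orthonormality, only $c_n$ and $c_{n-1}$ are nonzero. This gives
\[
-{p_i^{(n)}}^{\top}e^{(n)}=\prod_{j<i}\Big(1-\tfrac{\lambda_n^{(n)}}{\lambda_j^{(n)}}\Big)(v_n^{(n)})_1-s\prod_{j<i}\Big(1-\tfrac{\lambda_{n-1}^{(n)}}{\lambda_j^{(n)}}\Big)(v_{n-1}^{(n)})_1\quad(i\le n-1),
\]
and for $i=n$ only the first term remains. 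Every product is positive, because $0<\lambda_k^{(n)}/\lambda_j^{(n)}<1$ for $k>j$. Hence $e^{(n)}\in\mathcal{F}^{(n)}$ for every such $s$, whatever its magnitude.

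Since $v_{n-1}^{(n)}\perp v_n^{(n)}>0$, the vector $v_{n-1}^{(n)}$ has entries of both signs. Choosing $|s|$ large therefore produces a positive entry in $e^{(n)}$. This proves the second part of \eqref{eq:-P subset -C}, and with it the strict inclusion $\mathbb{R}^n_-\subsetneq\mathcal{E}^{(n)}$ claimed in Lemma~\ref{lemma:conditions_positive_state}.
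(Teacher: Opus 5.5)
Your proof is correct. For the key inclusion $\mathcal{F}^{(n)}\subseteq\mathcal{E}^{(n)}$ it takes a genuinely different route from the paper.

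The paper argues on the dual side. It writes $\mathcal{E}^{(n)}=-{\mathcal{C}^{(n)}}^*$ and $\mathcal{F}^{(n)}=-{\mathcal{P}^{(n)}}^*$, which reduces the claim to $\mathcal{C}^{(n)}\subseteq\mathcal{P}^{(n)}$. For each $t\ge0$ it then exhibits an explicit $a^{(t)}\ge0$ with $P^{(n)}a^{(t)}=\sum_i e^{\lambda_i^{(n)}t}(v_i^{(n)})_1v_i^{(n)}$. The sign of $a^{(t)}$ comes from augmented Vandermonde determinants. In effect, $a^{(t)}_i$ is a positive multiple of the divided difference of $\lambda\mapsto e^{\lambda t}$ at $\lambda_1^{(n)},\dots,\lambda_i^{(n)}$, with the $p_i^{(n)}$ acting as a Newton-type basis.

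You stay on the primal side. Your recursion $g_{i+1}=g_i-g_i'/\lambda_i^{(n)}$ is the differential-operator counterpart of that Newton structure. Your backward comparison argument replaces two things the paper states without detail: the ``straightforward computation'' of $P^{(n)}a^{(t)}$ and the sign claim $\Delta_i^{(t)}\ge0$.

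What each route buys:
\begin{itemize}
\item The paper's route gives an explicit certificate that the whole curve $\mathcal{C}^{(n)}$ lies in the polyhedral cone $\mathcal{P}^{(n)}$. This matches the geometric description of $\mathcal{F}^{(n)}$ via the columns of $-({P^{(n)}}^{\top})^{-1}$.
\item Your route is shorter and self-contained. It also makes visible that the inclusion uses only $\lambda_i^{(n)}<0$; the increasing ordering enters only in the nonemptiness part.
\end{itemize}

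Your Step 4 supplies the invertibility of $P^{(n)}$, which the paper uses tacitly in the second description of $\mathcal{F}^{(n)}$.

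For nonemptiness, the paper simply takes $cv_j^{(n)}$ with $v_j^{(n)}$ of mixed signs and $c(v_j^{(n)})_1\le0$. Your construction also works, but the $-v_n^{(n)}$ term is superfluous: $s\,v_{n-1}^{(n)}$ alone already suffices.

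Finally, Remark~\ref{rem:A_kappa_property} is stated only for $n$ large. It is cleaner to get $\lambda_i^{(n)}<0$ for every $n$ directly. Indeed, $x^{\top}S^{(n)}A^{(n)}_{\gamma}S^{(n)}x=-p_2\sum_{i=1}^{n-1}(x_{i+1}-x_i)^2+\gamma p_1x_n^2$, which is $<0$ for every $x\neq0$ when $\gamma<0$.
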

\begin{pf}
    In view of Lemma \ref{lemma:conditions_positive_state}, it suffices to prove \eqref{eq:-P subset -C}. Let us fix $\gamma < 0$. The vectors $v_i^{(n)}$, $i=1,...,n$, forming an orthonormal basis of $\mathbb{R}^n$, there exists $c\in\mathbb{R}\setminus\{0\}$ and some $j$ such that $cv_j^{(n)}$ does not belong to $\mathbb{R}_-^n$ and $c(v_j^{(n)})_1\leq0$. The latter inequality combined with the fact that $\lambda_i^{(n)}$, $i=1,\dots,n$, are negative and ordered increasingly, implies that $cv_j^{(n)}\in \mathcal{F}^{(n)}$. Therefore, $\mathcal{F}^{(n)}\setminus\mathbb{R}_-^n \neq\emptyset$. 
    Now, let us set $\mathcal{C}^{(n)} :=\left\{\sum_{i=1}^n e^{\lambda_i^{(n)} t}(v_i^{(n)})_1 v_i^{(n)}:t\geq0\right\},$ and $\mathcal{P}^{(n)} := \left\{P^{(n)}a:a\in\mathbb{R}^n_+\right\}.$
    Observe that $\mathcal{E}^{(n)}=-{\mathcal{C}^{(n)}}^*$ and $\mathcal{F}^{(n)}=-{\mathcal{P}^{(n)}}^*$ correspond to the symmetric reflections of the dual cones of $\mathcal{C}^{(n)}$ and $\mathcal{P}^{(n)}$, respectively  (see \cite[Part I, Section 3, p.16]{Rockafellar1970}) and \cite[Chapter 1, Section 2, p.5]{Berman1989}).
    So, by \cite[Chapter 1, Exercise (2.13)]{Berman1989}, it suffices to show that $\mathcal{C}^{(n)}\subseteq\mathcal{P}^{(n)}$. For this purpose, let $t\geq0$ and set $a^{(t)}:=\begin{pmatrix}
        a^{(t)}_1,\dots,a^{(t)}_n
    \end{pmatrix}\in\mathbb{R}^n$, where for all $i=1,\dots,n$ 
    \begin{equation*}
        a^{(t)}_i := \Delta_i^{(t)}\displaystyle\left(\prod_{k=1}^{i-1}|\lambda_k^{(n)}|\right)\left(\displaystyle\prod_{1\leq l <j\leq i}(\lambda_j^{(n)}-\lambda_l^{(n)})\right)^{-1},
    \end{equation*}
    and
    \begin{equation*}
        \Delta_i^{(t)}:=\sum_{k=1}^i(-1)^{i-k}e^{\lambda_kt}\prod_{\substack{1\leq l <j\leq i\\j,l\neq k}}(\lambda_j^{(n)}-\lambda_l^{(n)}).
    \end{equation*}
    A straightforward computation yields:
    \begin{equation}\label{eq:P^n a^t}
        P^{(n)}a^{(t)} = \sum_{i=1}^n e^{\lambda_i^{(n)} t}(v_i^{(n)})_1 v_i^{(n)}.
    \end{equation}
    Moreover, it can be shown that $\Delta_i^{(t)}$ is the determinant of an ''augmented'' Vandermonde matrix: 
    \begin{equation*}
        \Delta_i^{(t)} = \begin{vmatrix}
            1&\lambda_1^{(n)}&\cdots&{\lambda_1^{(n)}}^{i-2}&e^{\lambda_1^{(n)} t}\\
            1&\lambda_2^{(n)}&\cdots&{\lambda_2^{(n)}}^{i-2}&e^{\lambda_2^{(n)} t}\\
            \vdots&\vdots&&\vdots&\vdots\\
            1&\lambda_i^{(n)}&\cdots&{\lambda_i^{(n)}}^{i-2}&e^{\lambda_i^{(n)} t}
        \end{vmatrix},
    \end{equation*}
    and so $\Delta_i^{(t)}\geq0$ because $\lambda_i^{(n)}$, $i=1,\dots,n$, are ordered increasingly.
    Consequently, we find that $a^{(t)}_i\geq0$ for all $i=1,\dots,n$. Finally, invoking \eqref{eq:P^n a^t} with the nonnegativity of $a^{(t)}$ shows that $\sum_{i=1}^n e^{\lambda_i^{(n)} t}(v_i^{(n)})_1 v_i^{(n)}\in\mathcal{P}^{(n)},$
    and since $t\geq0$ is arbitrary, we have that~$\mathcal{C}^{(n)}\subseteq\mathcal{P}^{(n)}$. \qed
\end{pf}
\begin{rem}
$\alpha$) For $n=2$, there holds $\mathcal{F}^{(2)} = \mathcal{E}^{(2)}$. Indeed, this can be shown by taking $t=0$ and $t\rightarrow\infty$, respectively, in the inequality of \eqref{eq:cone_Cn} divided by $e^{\lambda_{F}^{(n)} t}$, where $\lambda_{F}^{(n)}$ is the largest (negative) eigenvalue of $S^{(n)}A^{(n)}_{\gamma}S^{(n)}$. In the general case, by means of \eqref{eq:cone_Pn_checking}, it is easy to check that there are elements of $\mathcal{F}^{(n)}$ that are not in $\mathbb{R}_-^n$.
In addition, \eqref{eq:cone_Pn_implementation} is easy to use for determining suitable initial estimation errors.\\
$\beta$) From a geometrical point of view, $\mathcal{F}^{(n)}$ is the polyhedral cone generated by the $n$ columns of $-{\left({P^{(n)}}^{\top}\right)}^{-1}$. By Theorem \ref{theorem:error_inP_for_positivity}, it contains, in particular, some vectors that are not in the nonpositive cone of $\mathbb{R}^n$.  
More specifically, the vectors $p_i^{(n)}$ are linear combinations of the eigenvectors of $S^{(n)}A^{(n)}_{\gamma}S^{(n)}$, where the coefficients depend on the eigenvalues. In particular, $p_n^{(n)} \in \mathrm{span}\{v_n^{(n)}\}\cap\mathbb{R}^n_+$, where the nonnegativity is deduced from Perron-Frobenius theorem (see e.g. \cite[Theorem 4]{Arrow1989}) and the fact that $\lambda_i^{(n)}$, $i=1,\dots,n$, are negative and ordered increasingly. Moreover, since $\{v_i^{(n)}\}$ is an orthonormal basis, $p_1^{(n)} = \sum_{k=1}^n(v_k^{(n)})_1v_k^{(n)} = \mathbb{e}_1^{(n)}$. This implies that the first line of $-{({P^{(n)}}^{\top})}^{-1}$ is ${-\mathbb{e}_1^{(n)}}^{\top}$. So, all the elements of $\mathcal{F}^{(n)}$ have a nonpositive first component.\\
$\gamma$) As a consequence of Lemma \ref{lemma:conditions_positive_state} and Theorem \ref{theorem:error_inP_for_positivity}, for every initial state $x_0^{(n)}\in\mathbb{R}_+^n$, picking any $\tilde{x}_0^{(n)}\in \mathcal{F}^{(n)}+x_0^{(n)}$ guarantees the nonnegativity of the state trajectory $x^{(n)}(\cdot)$. In the state estimation context, $x_0^{(n)}$ is unknown but this consequence implies that, if a region $R(x_0^{(n)})$ to which $x_0^{(n)}$ belongs is known, then $\mathcal{F}^{(n)}+R(x_0^{(n)})$ provides an appropriate set of suitable initial estimated states $\tilde{x}_0^{(n)}$. Moreover, for any initial estimated state $\tilde{x}_0^{(n)}$ in $\mathcal{F}^{(n)}$, the nonnegativity property of the state trajectory holds.
\end{rem}

Now we get back to the considered problem for the nominal PDE system.  
As for the discretized system, the final goal is to derive sufficient algebraic conditions for an initial estimation error $e_0$ to lead to a nonnegative state trajectory $x$, for any nonnegative initial state $x_0$. We will see later that the derived algebraic conditions depend also on the eigenvalues and the eigenfunctions of the generator of the error dynamics (see Theorem \ref{theorem:suitable_combili_eigenFunctions}). Recall that, ideally, these conditions should not be reduced to the nonpositivity property. While we naturally adopt a time-domain approach for the problem related to the finite-dimensional system, it becomes natural to use a frequency-domain approach (based on the resolvent operator linked to the $(x,e)$-dynamics) to tackle the infinite-dimensional problem; see, for instance, \cite[Corollary 11.4]{Batkai2017} or \cite[Proposition 2.2]{Laabissi2001}. In this context, Lemma~\ref{lemma:conditions_positive_state_PDE} introduces the cone $\mathcal{E}$, which is strictly larger than the nonpositive cone of $\mathrm{L}^2(0,L)$, of suitable initial estimation errors with respect to our problem. Note that Lemma \ref{lemma:conditions_positive_state_PDE} is instrumental for Theorem \ref{theorem:suitable_combili_eigenFunctions}$\beta$.
\begin{lem}\label{lemma:conditions_positive_state_PDE}
Let $\gamma\in\mathbb{R}$ and consider the cone \begin{equation}\label{eq:cone_D}
    \begin{aligned}
        \mathcal{E} := \{&e\in\mathrm{L}^2(0,L) : \exists\omega\geq0,\,
        \forall \lambda>\omega, \\&(R(\lambda,SA_{\gamma}S^{-1})^m e)(0)\leq 0,\, \forall m\in \mathbb{N}\setminus\{0\}\},
    \end{aligned}
\end{equation}
where $R(\lambda,SA_{\gamma}S^{-1})$ denotes the resolvent operator of $SA_{\gamma}S^{-1}$. For all $\kappa < 0$ and every initial estimation error $ e_0$ belonging to $\mathcal{E}$, the mild solution at any time $t\geq0$ of the $(x,e)$-dynamics \eqref{eq:system_x_e} with initial condition $(x_0\hspace{0.2cm}e_0)^{\top}$ belongs to $\mathrm{L}^2(0,L)_+ \times \mathrm{L}^2(0,L)$ for all nonnegative initial states $x_0$, that is 
\begin{equation*}
    (T_{\tilde{A}_{\kappa\gamma}}(t))\begin{pmatrix}
            x_0\\e_0
        \end{pmatrix}
\in \mathrm{L}^2(0,L)_+ \times \mathrm{L}^2(0,L), 
\end{equation*} 
for all $x_0\in\mathrm{L}^2(0,L)_+$ and $t \geq 0$, where $(T_{\tilde{A}_{\kappa\gamma}}(t))_{t\geq 0}$ is the $C_0$-semigroup generated by $\tilde{A}_{\kappa\gamma}$ on $\mathrm{L}^2(0,L) \times \mathrm{L}^2(0,L)$. \\
Moreover, if $\gamma<0$, then $\displaystyle (\mathrm{L}^2(0,L))_{-}\subsetneq\mathcal{E}$.
\end{lem}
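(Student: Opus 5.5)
The plan is to mirror the proof of Lemma \ref{lemma:conditions_positive_state}, replacing the time-domain argument by a resolvent argument in the spirit of \cite[Corollary 11.4]{Batkai2017}.

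\textbf{Step 1: decoupling the $x$-component.} Fix $\kappa<0$ and $e_0\in\mathcal{E}$. The $e$-component of \eqref{eq:system_x_e} evolves autonomously: $e(t)=T_{SA_{\gamma}S^{-1}}(t)e_0$. The $x$-component then solves the boundary control problem \eqref{eq:pure_diffusion} under the feedback $u=\kappa(x(0,t)+e(0,t))$. Equivalently, $x$ is driven by $A_\kappa$ with the boundary input $v(t):=\kappa e(0,t)$. Hence
\[
x(t)=T_{A_\kappa}(t)x_0+\Phi_t v ,
\]
where $\Phi_t$ is the input map of the boundary control system attached to $A_\kappa$. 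The first term is nonnegative because $(T_{A_\kappa}(t))_{t\ge0}$ is positive. It therefore suffices to show that:
\begin{itemize}
\item $\Phi_t$ maps nonnegative inputs to nonnegative states, which follows from Green's function or resolvent positivity of the inhomogeneous Neumann problem, exactly as in Proposition \ref{prop:positivity_A0gamma};
\item $v(t)\ge0$ for all $t\ge0$, that is, $e(0,t)\le0$.
\end{itemize}

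\textbf{Step 2: from the resolvent condition to $e(0,t)\le 0$ (the main obstacle).} Write $A:=SA_{\gamma}S^{-1}$. By the exponential (Post--Widder) formula,
\[
T_{A}(t)e_0=\lim_{m\to\infty}\Bigl(\tfrac{m}{t}\Bigr)^m R\bigl(\tfrac{m}{t},A\bigr)^m e_0 .
\]
For $m$ large, $m/t>\omega$, so the pointwise condition in \eqref{eq:cone_D} gives nonpositivity of the approximants at $z=0$. The difficulty is that the limit holds in $\mathrm{L}^2$, while we need it pointwise at $z=0$.

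\begin{itemize}
\item \emph{Approach via $\mathrm{H}^1$ convergence.} Evaluation at $0$ is continuous on $\mathrm{H}^1(0,L)$. Moreover, $R(\lambda,A)$ maps into $D(A)\subset \mathrm{H}^2$. For $e_0\in D(A)$, the Post--Widder approximants converge in the graph norm, so $e(0,t)\le0$. For general $e_0$ and $t>0$, analyticity of the semigroup (the generator is self-adjoint) gives smoothing: $T_A(t)e_0\in D(A)$. Applying the argument to $T_A(\epsilon)e_0$ requires care, because one must check that $\mathcal{E}$ is invariant under $T_A(\epsilon)$. This should follow from commutation of $R$ with $T_A$ together with positivity.
\item \emph{Alternative.} Work directly in the $(x,e)$ frequency domain. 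Show that
\[
R(\lambda,\tilde A_{\kappa\gamma})^m(x_0,e_0)^{\top}\in \mathrm{L}^2_+\times \mathrm{L}^2
\]
for all $m$ and all large $\lambda$. The $x$-component of $R(\lambda,\tilde A_{\kappa\gamma})(x,e)^{\top}$ equals $R(\lambda,A_\kappa)x$ plus a nonnegative kernel (from the boundary lift) times $-\kappa$ times $-(R(\lambda,A)e)(0)$, which is $\ge0$. One then iterates over $m$, expanding the triangular structure, and this is precisely where the hypothesis for all $m$ enters. Afterwards the Post--Widder limit in $\mathrm{L}^2$ is enough, since the cone $\mathrm{L}^2_+\times\mathrm{L}^2$ is closed.
\end{itemize}

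I would pursue the alternative first, since it avoids pointwise evaluation of the limit.

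\textbf{Step 3: strict inclusion.} For $e_0\le0$, positivity of $R(\lambda,A)$ for large $\lambda$ gives $R(\lambda,A)^m e_0\le0$, hence $(R(\lambda,A)^m e_0)(0)\le0$ by continuity of $\mathrm{H}^2$ functions. Thus $(\mathrm{L}^2)_-\subseteq\mathcal{E}$.

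For strictness when $\gamma<0$, the plan is to use that $A$ is Riesz spectral with real negative eigenvalues $\lambda_k$ and eigenfunctions $\phi_k$. Take $e_0=c\,\phi_j$ for some $j\ge2$, which changes sign, with $c$ chosen so that $c\,\phi_j(0)\le0$. Then
\[
(R(\lambda,A)^m e_0)(0)=\frac{c\,\phi_j(0)}{(\lambda-\lambda_j)^m}\le0
\quad\text{for all }\lambda>0,
\]
so $e_0\in\mathcal{E}\setminus(\mathrm{L}^2)_-$. This is the analogue of the eigenvector argument in Theorem \ref{theorem:error_inP_for_positivity}.
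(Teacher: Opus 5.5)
Your preferred \emph{alternative} in Step 2 is the paper's proof. The resolvent of $\tilde{A}_{\kappa\gamma}$ is block-triangular with first component $R(\lambda,A_{\kappa})g_1+\kappa\,(R(\lambda,SA_{\gamma}S^{-1})g_2)(0)\,f_2$, where $f_2(z)=\cosh(\mu(z-L))/(\mu\sinh(\mu L)-\kappa\cosh(\mu L))>0$; its $m$-th power gives the sum over $i=1,\dots,m$ that uses the hypothesis for every $m$; and the exponential formula with closedness of $\mathrm{L}^2(0,L)_+\times\mathrm{L}^2(0,L)$ concludes. Your inclusion and strictness arguments (resolvent positivity, and $c\phi_j$ with $j\geq 2$ and $c\phi_j(0)\leq 0$) also match the paper's appeal to Theorem~\ref{theorem:suitable_combili_eigenFunctions}$\alpha$, while the input-map decomposition of Step 1 and the $\mathrm{H}^1$ route, whose claimed invariance of $\mathcal{E}$ under $T_{SA_{\gamma}S^{-1}}(\epsilon)$ you leave unjustified, are unnecessary on that route.
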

\begin{pf}
    Let $\kappa<0$ and  $\gamma\in\mathbb{R}$. Note first that evaluating $R(\lambda,SA_{\gamma}S^{-1})^m e$ at $z=0$ in \eqref{eq:cone_D} makes sense because these functions are in $D(SA_{\gamma}S^{-1})\subseteq C(0,L)$.\\
    The inclusion $ (\mathrm{L}^2(0,L))_{-}\subseteq\mathcal{E}$ is satisfied because $SA_{\gamma}S^{-1}$ generates a positive $C_0$-semigroup on $\mathrm{L}^2(0,L)$; see Section \ref{section:estimation}. If $\gamma<0$, $(\mathrm{L}^2(0,L))_{-}\neq\mathcal{E}$ is an immediate consequence of \eqref{eq:-Q subset -D} in Theorem \ref{theorem:suitable_combili_eigenFunctions}$\alpha$ below.\\
    Now let $e_0\in\mathcal{E}$, $x_0\in\mathrm{L}^2(0,L)_+$, and $t\geq0$. Fix also $g := (g_1\hspace{0.2cm}g_2)^{\top}\in \mathrm{L}^2(0,L) \times \mathrm{L}^2(0,L)$ and $\lambda>0$. By using standard arguments, one gets:
    \begin{equation*}
        R(\lambda,\tilde{A}_{\kappa\gamma})g\\
        = \begin{pmatrix}
                R(\lambda,A_{\kappa})g_1+  f_1 \\
                R(\lambda,SA_{\gamma}S^{-1})g_2
            \end{pmatrix},
    \end{equation*}
    where, for all $z\in[0,L]$,
    \begin{equation}\label{eq:f}
        f_1(z) := \cosh(\mu z)\frac{c_1}{c_2}-\kappa\left(\frac{c_1}{c_2}+(R_{\gamma}(\lambda)g_2)(0)\right)\frac{\sinh(\mu z)}{\mu},
    \end{equation}
    with $\mu := \sqrt{\lambda D_a^{-1}}$, $R_{\gamma}(\lambda) := R(\lambda,SA_{\gamma}S^{-1})$, and
    \begin{equation*}
    \left\{
    \begin{aligned}
        c_1 &:= \kappa (R_{\gamma}(\lambda)g_2)(0)\cosh(\mu L),\\
        c_2 &:= \mu\sinh(\mu L)-\kappa\cosh(\mu L).
    \end{aligned}
    \right.
    \end{equation*}
    Definition \eqref{eq:f} leads to the identity $f_1 = \kappa (R_{\gamma}(\lambda)g_2)(0) f_2$, where the function $f_2$ satisfies 
    \begin{equation}\label{eq:cosh/c_2}
        f_2(z) := \frac{\cosh(\mu(z-L))}{c_2}>0,\quad \forall z\in [0,L], 
    \end{equation}
    since $\kappa<0$ and $\mu L>0$. Now, for all $m\in\mathbb{N}\setminus\{0\}$,
    \begin{equation*}\label{eq:resolvent_m}
        \begin{aligned}
        R&(\lambda,\tilde{A}_{\kappa\gamma})^mg\\
        = &\begin{pmatrix}
                R(\lambda,A_{\kappa})^m g_1 +  \displaystyle\sum_{i=1}^m\kappa (R_{\gamma}(\lambda)^ig_2)(0)R(\lambda,A_{\kappa})^{m-i}f_2 \\
                R(\lambda,SA_{\gamma}S^{-1})^mg_2
            \end{pmatrix}.
        \end{aligned}
    \end{equation*}
    Now combining the fact that $x_0\in \mathrm{L}^2(0,L)_+$ with the positivity of the $C_0$-semigroup generated by $A_{\kappa}$ on $\mathrm{L}^2(0,L)$, and using \eqref{eq:cosh/c_2} together with the conditions $e_0\in \mathcal{E}$ and $\kappa<0$, we conclude that there exists $\omega\geq0$ such that, for all $\lambda>\omega$ and $m\in\mathbb{N}\setminus\{0\}$, 
    \begin{equation*}
    R\left(\lambda,\tilde{A}_{\kappa\gamma}\right)^m\begin{pmatrix}
            x_0\\e_0
        \end{pmatrix}\in \mathrm{L}^2(0,L)_+ \times \mathrm{L}^2(0,L).
    \end{equation*}
    Therefore, by the exponential formula for $C_0$-semigroups \cite[p. 33]{Pazy1983},
    \begin{equation}\label{eq:exponential_formula}
        (T_{\tilde{A}_{\kappa\gamma}}(t))
        \begin{pmatrix}
            x_0\\e_0
        \end{pmatrix} = \lim_{m\to +\infty}\left[\frac{m}{t}R\left(\frac{m}{t},\tilde{A}_{\kappa\gamma}\right)\right]^m\begin{pmatrix}
            x_0\\e_0
        \end{pmatrix},
    \end{equation}
    and since $\mathrm{L}^2(0,L)_+ \times \mathrm{L}^2(0,L)$ is closed, we conclude that the state trajectory given by \eqref{eq:exponential_formula} is in  $\mathrm{L}^2(0,L)_+ \times \mathrm{L}^2(0,L)$.
    \qed
\end{pf}
By examining its proof, observe that Lemma \ref{lemma:conditions_positive_state_PDE} is still valid if $\omega$ is fixed to zero in the definition of $\mathcal{E}$. However, the resulting set is included in $\mathcal{E}$, and so it is more restrictive. Furthermore, as for the discretized case, this first natural set $\mathcal{E}$ is hard to determine. This motivates the study of an auxiliary set $\mathcal{F}$, defined in the following theorem : for initial estimation errors that are linear combinations of eigenfunctions of $SA_{\gamma}S^{-1}$, simple algebraic conditions on the coefficients are given for the resulting linear combination to be in $\mathcal{E}$, and so to be suitable. 
\begin{thm}\label{theorem:suitable_combili_eigenFunctions}
$\alpha$) For all $\gamma < 0$, 
\begin{equation}\label{eq:-Q subset -D}
    \mathcal{F} \subseteq \mathcal{E} \quad \text{and}\quad \mathcal{F}\setminus (\mathrm{L}^2(0,L))_{-}\neq\emptyset,
\end{equation}
where the cones $\mathcal{E}$ and $\mathcal{F}$ are defined by \eqref{eq:cone_D} and
\begin{equation}\label{eq:cone_Q}
     \mathcal{F}  :=  \bigcup_{\underset{|I|<\infty}{I\subset\mathbb{N}\setminus\{0\}}}\left\{\sum_{i\in I}a_i\phi_i:(a_i)_{i\in I}\in \Gamma_I^1\cap \Gamma_I^2\right\},
\end{equation}
respectively, with for all $I\subseteq\mathbb{N}\setminus\{0\}, |I|<\infty$,
\begin{equation*}
\left\{
\begin{aligned}
    \Gamma_I^1 &:= \left\{a\in\mathbb{R}^{|I|} : a^{\top}(c_i)_{i\in I}\leq0\right\},\\
    \Gamma_I^2 &:= \left\{(a_i)_{i\in I}\in\mathbb{R}^{|I|} : \min_{i\in I:a_i<0}\lambda_i > \max_{i\in I:a_i>0}\lambda_i\right\},
\end{aligned}
\right.
\end{equation*}
and $\lambda_i$ (resp. $\phi_i$), $i\in\mathbb{N}\setminus\{0\}$, are the eigenvalues (resp. normalized eigenfunctions) of $SA_{\gamma}S^{-1}$, that is $\lambda_i=-D_a\mu_i^2$ and $\phi_i(z) = c_i\cos(\mu_i z)$, $z\in[0,L]$, where $\mu_i$ is the $i$-th solution of $-\frac{\gamma}{\mu_i} = \tan(\mu_iL)$, and where the real parameters $c_i$ are strictly positive.\\
$\beta$) For all $\kappa<0$ and $ \gamma < 0$, and every initial estimation error $e_0$ belonging to $\mathcal{F}\cup(\mathrm{L}^2(0,L))_-$, the mild solution at any time $t\geq0$ of the $(x,e)$-dynamics \eqref{eq:system_x_e} with initial condition $(x_0\hspace{0.2cm}e_0)^{\top}$ belongs to $\mathrm{L}^2(0,L)_+ \times \mathrm{L}^2(0,L)$ for all nonnegative initial states $x_0$. 
\end{thm}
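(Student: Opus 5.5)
As in the discretized case, part $\beta$) follows at once from part $\alpha$) and Lemma~\ref{lemma:conditions_positive_state_PDE}, since $\mathcal{F}\cup(\mathrm{L}^2(0,L))_-\subseteq\mathcal{E}$ once \eqref{eq:-Q subset -D} holds. The work is therefore in part $\alpha$). The plan is to compute the powers of the resolvent explicitly on a finite eigenfunction combination, evaluate at $z=0$, and reduce the sign condition in \eqref{eq:cone_D} to a sign property of a finite sum of rational functions of $\lambda$.

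Fix $\gamma<0$, a finite $I$ and $(a_i)_{i\in I}\in\Gamma_I^1\cap\Gamma_I^2$, and set $e=\sum_{i\in I}a_i\phi_i$. Since $\phi_i$ is an eigenfunction of $SA_{\gamma}S^{-1}$ with eigenvalue $\lambda_i<0$, we have $R(\lambda,SA_{\gamma}S^{-1})^m\phi_i=(\lambda-\lambda_i)^{-m}\phi_i$ for all $\lambda>0$. Because $\phi_i(0)=c_i$, this gives
\begin{equation*}
(R(\lambda,SA_{\gamma}S^{-1})^m e)(0)=\sum_{i\in I}\frac{a_ic_i}{(\lambda-\lambda_i)^m}.
\end{equation*}
The index set splits into $I_+=\{i:a_i>0\}$ and $I_-=\{i:a_i<0\}$. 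By $\Gamma_I^2$ there is a threshold $\bar\lambda$ with $\lambda_i\le\bar\lambda<\lambda_j$ for all $i\in I_+$, $j\in I_-$. Hence for $\lambda>0$ and $i\in I_+$, $j\in I_-$ we get $\lambda-\lambda_i\ge\lambda-\bar\lambda>\lambda-\lambda_j>0$. Writing $d=\lambda-\bar\lambda>0$ and using $c_i>0$, we then bound
\begin{equation*}
\sum_{i\in I}\frac{a_ic_i}{(\lambda-\lambda_i)^m}\le \frac{1}{d^m}\sum_{i\in I_+}a_ic_i+\frac{1}{d^m}\sum_{j\in I_-}a_jc_j=\frac{1}{d^m}\,a^{\top}(c_i)_{i\in I}\le 0.
\end{equation*}
The first step replaces each positive term by a larger one, and each negative term is only made more negative since $(\lambda-\lambda_j)^{-m}>d^{-m}$. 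The final inequality is $\Gamma_I^1$. Since $\lambda_j<0$ for all $j$, one can take $\bar\lambda$ negative (between $\max_{I_+}\lambda_i$ and $\min_{I_-}\lambda_j$), so this holds for every $\lambda>0$, i.e.\ with $\omega=0$. Degenerate cases where $I_+$ or $I_-$ is empty are handled separately and trivially, using $c_i>0$. This proves $\mathcal{F}\subseteq\mathcal{E}$.

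For nontriviality, I would take $I=\{1,2\}$ with the ordering $\lambda_1>\lambda_2$ (i.e.\ $\mu_1<\mu_2$, the first two roots). Then choose $a_1>0$ and $a_2<0$ with $a_1c_1+a_2c_2\le0$; this satisfies $\Gamma^2$ only if the negative coefficient goes with the larger eigenvalue. So I take instead $a_1<0$ on $\phi_1$ (eigenvalue $\lambda_1$, the largest) and $a_2>0$ on $\phi_2$, with $a_2c_2\le -a_1c_1$. It remains to show that $a_1\phi_1+a_2\phi_2$ is not a.e.\ nonpositive. Since $\phi_1=c_1\cos(\mu_1 z)>0$ on $[0,L]$ (because $\mu_1L<\pi/2$), while $\phi_2$ is positive near $0$ and changes sign, a point where $\phi_2$ is large relative to $\phi_1$ should work. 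I expect this step to be the main technical obstacle, requiring care with the exact locations of $\mu_1,\mu_2$ from $\tan(\mu L)=-\gamma/\mu$. A cleaner fallback is to take $a_1=-1$, $a_2=\varepsilon$ small: then $e\approx-\phi_1<0$, which is useless. So instead choose $a_2=c_1/c_2$ and $a_1=-1$, giving $e(z)=c_1(\cos\mu_2z-\cos\mu_1z)$. This is $0$ at $z=0$, and its sign near $0$ is that of $(\mu_1^2-\mu_2^2)z^2/2<0$, so that does not work either. I therefore plan to use three eigenfunctions or to examine $z=L$ with $\cos\mu_2L$ versus $\cos\mu_1L$, exploiting the interlacing of the roots: $\mu_2\in(\pi/L,3\pi/(2L))$ gives $\cos\mu_2L<0$ and $\cos\mu_1 L>0$. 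Hence I will use $a_1<0$, $a_2>0$ combinations on $\phi_1$ and a higher mode whose cosine dominates at some interior point, and verify positivity at that point by a direct computation.
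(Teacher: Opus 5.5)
Your proof of part $\beta$) and of the inclusion $\mathcal{F}\subseteq\mathcal{E}$ is correct and is essentially the paper's argument. Your single threshold $d=\lambda-\bar\lambda$ plays the role of the paper's comparison $M^-_{m,\lambda}>M^+_{m,\lambda}$, and both finish with $\Gamma_I^1$.

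The gap is the other half of \eqref{eq:-Q subset -D}, namely $\mathcal{F}\setminus(\mathrm{L}^2(0,L))_-\neq\emptyset$. You do not prove it: you end with a plan (``a higher mode whose cosine dominates at some interior point'') rather than an explicit element. Moreover, the two-mode route on $\phi_1,\phi_2$ that you spend most effort on fails for every admissible choice, not only the boundary case you tested. With $a_1<0<a_2$ and $a_2c_2\le -a_1c_1$, normalize to $a_1=-1$ and $a_2=t\,c_1/c_2$ with $t\in(0,1]$, so that $e(z)=c_1\bigl(t\cos(\mu_2z)-\cos(\mu_1z)\bigr)$. Recall $\cos(\mu_1z)>0$ on $[0,L]$. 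For $z>0$ with $\mu_2z\le\pi$, you have $\cos(\mu_2z)<\cos(\mu_1z)$, hence $t\cos(\mu_2z)<\cos(\mu_1z)$ since $0<t\le1$. For $\mu_2z\in(\pi,\mu_2L]\subset(\pi,3\pi/2)$, you have $\cos(\mu_2z)<0<\cos(\mu_1z)$. Finally $e(0)=c_1(t-1)\le0$. So every such combination is nonpositive on $[0,L]$.

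The missing observation is that $\Gamma_I^2$ imposes nothing when no coefficient is positive. This uses the same convention $\max\emptyset=-\infty$ that you use in your degenerate cases, so a single mode suffices, which is what the paper does. Take $I=\{j\}$ with $j\ge2$ and $a_j<0$; then $a_jc_j<0$ gives $\Gamma_I^1$. Since $\mu_jL>(j-1)\pi\ge\pi$, the interval $(\pi/(2\mu_j),\pi/\mu_j)$ lies in $(0,L)$, and there $\cos(\mu_jz)<0$. Hence $a_j\phi_j>0$ on a set of positive measure.

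If you want to keep a sign change in the coefficients, your idea works with a mode $k\ge3$ instead of $k=2$. The function $e=-\phi_1+(c_1/c_k)\phi_k$ satisfies $\Gamma_I^1\cap\Gamma_I^2$, and $2\pi/\mu_k<L$ because $\mu_kL>2\pi$. At $z=2\pi/\mu_k$ one gets $e(z)=c_1\bigl(1-\cos(2\pi\mu_1/\mu_k)\bigr)>0$, hence $e>0$ near that point by continuity.
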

\begin{pf}
    In view of Lemma \ref{lemma:conditions_positive_state_PDE}, it suffices to prove \eqref{eq:-Q subset -D}. Let us fix $\gamma<0$. It is clear that $c\phi_j\in\mathcal{F}$ for any real $c<0$ and $j\neq1$. However, such $c\phi_j\notin \mathrm{L}^2(0,L))_-$. This implies that $\mathcal{F}\setminus (\mathrm{L}^2(0,L))_{-}\neq\emptyset$. Now, let $I\subseteq\mathbb{N}\setminus\{0\}, |I|<\infty$, and $(a_i)_{i\in I} \in \Gamma_I^1\cap \Gamma_I^2$. Then, $e = \sum_{i\in I}a_i\phi_i\in \mathcal{F} $. Now, let $\lambda>0$ and $m\in\mathbb{N}\setminus\{0\}$. Applying \cite[Theorem 3.2.8]{Curtain_Zwart2020} to $SA_{\gamma}S^{-1}$ gives
    \begin{equation*}
        \begin{aligned}
            R(\lambda,SA_{\gamma}S^{-1}) e = \sum_{i\in I}a_i\frac{1}{\lambda-\lambda_i}\phi_i,
        \end{aligned}
    \end{equation*}
    and so, by induction, 
    \begin{equation}\label{eq:resolvent_m_combili}
        R(\lambda,SA_{\gamma}S^{-1})^m e = \sum_{i\in I}a_i\frac{1}{(\lambda-\lambda_i)^m}\phi_i.
    \end{equation}
    The negativity of $\lambda_i$, $i\in\mathbb{N}\setminus\{0\}$, combined with the positivity of $c_i$, $i\in\mathbb{N}\setminus\{0\}$, and routine estimates of \eqref{eq:resolvent_m_combili} at zero yield 
    \begin{equation}\label{eq:estimate_R^m(0)}
        \begin{aligned}
            &(R(\lambda,SA_{\gamma}S^{-1})^m e)(0)  \\&= \sum_{i\in I:a_i>0}a_i\frac{1}{(\lambda-\lambda_i)^m}c_i + \sum_{i\in I:a_i<0}a_i\frac{1}{(\lambda-\lambda_i)^m}c_i\\
            &\leq
            M_{m,\lambda}^+\sum_{i\in I:a_i>0}a_i c_i + M_{m,\lambda}^-\sum_{i\in I:a_i<0}a_ic_i,
        \end{aligned}
    \end{equation}
    where
    \begin{equation*}
        M_{m,\lambda}^+ := \max_{i\in I:a_i>0}\frac{1}{(\lambda-\lambda_i)^m} = \frac{1}{(\lambda-\max_{i\in I:a_i>0}\lambda_i)^m},
    \end{equation*}
    and
    \begin{equation*}
        M_{m,\lambda}^- := \min_{i\in I:a_i<0}\frac{1}{(\lambda-\lambda_i)^m} = \frac{1}{(\lambda-\min_{i\in I:a_i<0}\lambda_i)^m}.
    \end{equation*}
    Now observe that $(a_i)_{i\in I} \in \Gamma_I^1$ ($(a_i)_{i\in I} \in \Gamma_I^2$, resp.) ensures that
    \begin{equation}\label{eq:bounds_1}
        -\frac{\sum_{i\in I:a_i>0}a_i c_i}{\sum_{i\in I:a_i<0}a_i c_i}\leq 1 \quad \left(1 < \frac{M_{m,\lambda}^-}{M_{m,\lambda}^+} , \text{resp.} \right) 
    \end{equation}
    Combining inequalities \eqref{eq:estimate_R^m(0)} and \eqref{eq:bounds_1}, it follows that $(R(\lambda,SA_{\gamma}S^{-1})^m e)(0)\leq0$. Therefore, the property $e\in\mathcal{E}$ is satisfied for $\omega=0$.
    \qed
\end{pf}
\begin{rem}
$\alpha$) Analogously to Theorem \ref{theorem:error_inP_for_positivity} for the discretized case, Theorem \ref{theorem:suitable_combili_eigenFunctions} provides sufficient algebraic conditions for an initial estimation error $e_0$ to lead to a nonnegative state trajectory $x$, for any nonnegative initial state $x_0$. Here, the initial estimation errors in $\mathcal{F}$ are linear combinations of eigenfunctions of $SA_{\gamma}S^{-1}$. The derived conditions relate to the coefficients of these linear combinations and depend on the eigenvalues (via  $\Gamma_I^2$) and the eigenfunctions (via $\Gamma_I^1$) that are linked to the error dynamics, that is, to the operator $SA_{\gamma}S^{-1}$. More precisely, the set $\Gamma_I^2$ determines the signs of the coefficients $a_i$. Indeed, $(a_i)_{i\in I}\in \Gamma_I^2$ if, and only if, there exists $i^*\in I$ such that
\begin{equation}\label{eq:def_orthant_O_i*}
    \left\{
        \begin{aligned}
        &a_i\geq0\,, \forall i \in I \text{ s.t. } \lambda_i<\lambda_{i^*}\,,\\
        &a_{i^*}\in\mathbb{R}\,,\\
        &a_i\leq0\,, \forall i \in I \text{ s.t. } \lambda_{i^*}<\lambda_i\,.
        \end{aligned}
    \right.
\end{equation} 
Observe that the vectors $(a_i)_{i\in I}$ satisfying \eqref{eq:def_orthant_O_i*} form the union of two orthants of $\mathbb{R}^{|I|}$, that we denote $O_{i^*}^+$ and $O_{i^*}^-$, respectively. So, $\Gamma_I^2 = \cup_{i^*\in I} (O_{i^*}^+\cup O_{i^*}^-)$. Then, once the signs have been fixed, $\Gamma_I^1$ is more about the magnitudes of the coefficients. It actually defines a closed half-space in $\mathbb{R}^{|I|}$ related to the strictly positive vector $(c_i)_{i\in I}$.\\
$\beta$) In view of Theorem \ref{theorem:suitable_combili_eigenFunctions}, there are many other initial estimation errors than the (trivial) nonpositive functions of $\mathrm{L}^2(0,L)$, that are suitable. Some of these elements belong to $\mathcal{F}\setminus (\mathrm{L}^2(0,L))_{-}$. In contrast, using arguments similar to those in Theorem \ref{theorem:suitable_combili_eigenFunctions}, we can show that $\tilde{\mathcal{F}}\nsubseteq \mathcal{E}$, where 
\begin{equation*}
    \tilde{\mathcal{F}}  :=  \bigcup_{\underset{|I|<\infty}{I\subseteq\mathbb{N}\setminus\{0\}}}\left\{\sum_{i\in I}a_i\phi_i:(a_i)_{i\in I}\in (\Gamma_I^1)^c\cup \tilde{\Gamma}_I^2\right\}, 
\end{equation*}
with, for all $I\subseteq\mathbb{N}\setminus\{0\}, |I|<\infty$,
\begin{equation*}
    \tilde{\Gamma}_I^2 := \left\{(a_i)_{i\in I}\in\mathbb{R}^{|I|} : \min_{i\in I:a_i>0}\lambda_i > \max_{i\in I:a_i<0}\lambda_i\right\}.
\end{equation*}
$\gamma$) Because of the inequality in $\Gamma_I^1$ and/or the strict positivity of $(c_i)_{i\in I}$, two other observations can be made regarding any $e_0\in\mathcal{F}$: at least one coefficient related to $e_0$ is nonpositive, and $e_0(0)\leq 0$.
\end{rem}

Theorem \ref{theorem:suitable_combili_eigenFunctions} establishes a positivity property for mild solutions of the $(x,e)$-dynamics \eqref{eq:system_x_e}. In the following result, we now turn our attention to classical solutions of the same dynamics.
\begin{cor}\label{cor:classical_solutions}
    Let $\kappa\in\mathbb{R}$ and consider the linear functional $F$ 
    defined by $F(x) := \frac{-1}{\kappa} \frac{{\rm  d}  x}{{\rm  d}  z}(0) - x(0),$
    with domain $D(F) := \{x\in \mathrm{H}^2(0,L) :  \frac{{\rm  d}  x}{{\rm  d}  z}(L)=0 \}.$
    For all $\kappa, \gamma < 0$, and every nonnegative initial state $x_0$, there are initial estimation errors $e_0$ belonging to $\mathcal{F}$, defined by \eqref{eq:cone_Q}, such that the classical solution at any time $t\geq0$ of the $(x,e)$-dynamics \eqref{eq:system_x_e} with initial condition $(x_0\hspace{0.2cm}e_0)^{\top}$ belongs to $(\mathrm{L}^2(0,L)_+ \times \mathrm{L}^2(0,L))\cap D(\tilde{A}_{\kappa\gamma})$ if, and only if, $x_0$ belongs to $D(F)$ and $F(x_0)\leq0$.
\end{cor}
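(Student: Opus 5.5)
The argument rests on the characterization of classical solutions: the mild solution $T_{\tilde{A}_{\kappa\gamma}}(t)(x_0\ e_0)^{\top}$ is a classical solution, and so stays in $D(\tilde{A}_{\kappa\gamma})$ for all $t\ge0$, if, and only if, $(x_0\ e_0)^{\top}\in D(\tilde{A}_{\kappa\gamma})$. The plan is to translate the domain conditions into conditions on $(x_0,e_0)$ and show that the $e_0$-part can always be met inside $\mathcal{F}$, while the $x_0$-part is exactly $x_0\in D(F)$ with $F(x_0)\le 0$. Nonnegativity of the first component is then supplied by Theorem \ref{theorem:suitable_combili_eigenFunctions}$\beta$. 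The domain conditions are:
\begin{itemize}
\item $x_0,e_0\in \mathrm{H}^2(0,L)$;
\item $x_0'(L)=0$;
\item $e_0'(0)=0$ and $e_0'(L)=\gamma e_0(L)$;
\item $x_0'(0)=-\kappa(x_0(0)+e_0(0))$.
\end{itemize}
The last condition is equivalent to $e_0(0)=-\frac{1}{\kappa}x_0'(0)-x_0(0)=F(x_0)$.

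\emph{Necessity.} Suppose such an $e_0\in\mathcal{F}$ exists. Then $(x_0\ e_0)^{\top}\in D(\tilde{A}_{\kappa\gamma})$, so $x_0\in \mathrm{H}^2(0,L)$ with $x_0'(L)=0$, that is, $x_0\in D(F)$, and $e_0(0)=F(x_0)$. By part $\gamma$ of the remark following Theorem \ref{theorem:suitable_combili_eigenFunctions}, every element of $\mathcal{F}$ satisfies $e_0(0)\le0$. Hence $F(x_0)\le 0$. This direction is short.

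\emph{Sufficiency.} Let $x_0\ge0$, $x_0\in D(F)$ and $F(x_0)\le0$. We must produce $e_0\in\mathcal{F}$ with $e_0(0)=F(x_0)$. Elements of $\mathcal{F}$ are finite combinations of the eigenfunctions $\phi_i=c_i\cos(\mu_i\,\cdot)$. Each such combination lies in $D(SA_\gamma S^{-1})$, so it automatically satisfies $e_0'(0)=0$ and $e_0'(L)=\gamma e_0(L)$ and belongs to $\mathrm{H}^2$.
\begin{itemize}
\item The simplest choice is $e_0=a\phi_1$ with $a=F(x_0)/c_1\le0$. Then $I=\{1\}$.
\item Condition $\Gamma_I^1$ reads $ac_1\le0$, which holds since $c_1>0$.
\item Condition $\Gamma_I^2$ is vacuous or trivially true for a single index, since the max over an empty set is $-\infty$.
\end{itemize}
Thus $e_0\in\mathcal{F}$, in fact $e_0\in(\mathrm{L}^2(0,L))_-$, and $e_0(0)=ac_1=F(x_0)$. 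So $(x_0\ e_0)^{\top}\in D(\tilde{A}_{\kappa\gamma})$, and the classical solution $t\mapsto T_{\tilde{A}_{\kappa\gamma}}(t)(x_0\ e_0)^{\top}$ remains in $D(\tilde{A}_{\kappa\gamma})$. By Theorem \ref{theorem:suitable_combili_eigenFunctions}$\beta$ it also remains in $\mathrm{L}^2(0,L)_+\times \mathrm{L}^2(0,L)$. Nontrivial elements of $\mathcal{F}$ work too: for instance $a_1\phi_1+a_j\phi_j$ with $a_j<0$, $a_1\ge0$ and $a_1c_1+a_jc_j=F(x_0)$ lies in $\mathcal{F}$, by the ordering $\lambda_1<\lambda_j$ of the eigenvalues.

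\emph{Main obstacle.} The delicate point is the interpretation of ``classical solution'' as equivalent to the initial condition lying in $D(\tilde{A}_{\kappa\gamma})$. Since $\tilde{A}_{\kappa\gamma}$ generates a $C_0$-semigroup (Proposition \ref{prop:stability_Atilde_kappagamma}; for general $\kappa,\gamma$ one argues via the Riesz-spectral structure), this is the standard theory, e.g.\ \cite{Pazy1983}. I will invoke it carefully, together with the fact that $\mathcal{F}$-elements belong to the domain of the error generator. A subtle case is $F(x_0)=0$; there $e_0=0\in\mathcal{F}$ (taking $a=0$) works.
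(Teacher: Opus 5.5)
Your proof is correct and essentially the paper's. Necessity reads $x_0\in D(F)$ and $e_0(0)=F(x_0)$ off $D(\tilde{A}_{\kappa\gamma})$ at $t=0$, and combines this with $e_0(0)=\sum_{i\in I}a_ic_i\le 0$ on $\mathcal{F}$. Sufficiency rescales an element of the cone $\mathcal{F}$ with nonzero value at $z=0$ (you pick $\phi_1$) so that $e_0(0)=F(x_0)$, then invokes Theorem \ref{theorem:suitable_combili_eigenFunctions}$\beta$.

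Your aside on nontrivial elements has the eigenvalue ordering backwards, though the proof does not rely on it. Since $\lambda_i=-D_a\mu_i^2$ with $\mu_i$ increasing, $\lambda_1>\lambda_j$ for $j\neq 1$, so $a_1\phi_1+a_j\phi_j$ with $a_1>0>a_j$ violates $\Gamma_I^2$; a nontrivial element of $\mathcal{F}$ needs $a_1<0<a_j$ instead.
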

\begin{pf}
    Let $\kappa<0$, $\gamma < 0$ and $x_0\in \mathrm{L}^2(0,L)_+$. Since $\mathcal{F}$ is a cone included in $D(SA_{\gamma}S^{-1})$ and by Theorem \ref{theorem:suitable_combili_eigenFunctions}, the sufficiency of the condition follows by taking $e_0  = c\tilde{e}_0$, where $\tilde{e}_0\in\mathcal{F}$, $\tilde{e}_0(0)\neq0$, and $c := F(x_0)/\tilde{e}_0(0)$.
    The necessity is easily proved thanks to the definitions of $\mathcal{F}$ and $D(\tilde{A}_{\kappa\gamma})$. \qed
\end{pf}

\subsection{Numerical simulations}\label{subsection:numerical_simulations}
This subsection is devoted to numerical simulations in order to illustrate some results from Subsections \ref{subsection:Nonpositivity} and \ref{subsection:Cones_design}. Fix $\kappa<0$ and $\gamma<0$ and first consider
\begin{equation}\label{eq:CI}
    \begin{pmatrix}
        x_{0,a}\\\tilde{x}_{0,a}
    \end{pmatrix}
    := \sum_{i=1}^{N}\left\langle \bar{X},\Psi_i\right\rangle \Phi_i,
\end{equation}
where $\langle\cdot,\cdot\rangle$ is the usual inner product on $\mathrm{L}^2(0,L)\times \mathrm{L}^2(0,L)$, $N\geq1$, $\bar{X}$ is given by \eqref{eq:counter_example},~$\{\Phi_i,i\geq1\}$ are the~$\mathrm{L}^2(0,L)\times \mathrm{L}^2(0,L)$-normalized eigenfunctions of the Riesz spectral operator $A_{\kappa\gamma}$, and $\{\Psi_i,i\geq1\}$ is the associated biorthogonal sequence. The corresponding eigenvalues $\lambda_i$ are ordered decreasingly, i.e. $\lambda_{i+1}<\lambda_{i}$, for all $i\geq1$. By construction, $(x_{0,a}\hspace{0.2cm}\tilde{x}_{0,a})^{\top}$ defined in \eqref{eq:CI} belongs to $D(A_{\kappa\gamma})$. Moreover, \eqref{eq:CI} provides a $\mathrm{L}^2(0,L)\times \mathrm{L}^2(0,L)$-approximation of $\bar{X}$. Hence, since $\bar{X}$ is positive, 
the initial condition $(x_{0,a}\hspace{0.2cm}\tilde{x}_{0,a})^{\top}$ is nonnegative for sufficiently large $N$. Figure \ref{fig:counterexample} shows the state (Panel \ref{fig:counterexample_x}) and estimated state (Panel \ref{fig:counterexample_xtilde}) trajectories obtained from the nonnegative initial condition \eqref{eq:CI}, with the parameters $L=1, D_a=1, \kappa=-1, \gamma=-2$, and $N=20$.
\begin{figure}
    \centering
    \begin{subfigure}[]{0.5\textwidth}
        \centering
        \includegraphics[width = 0.85\textwidth]{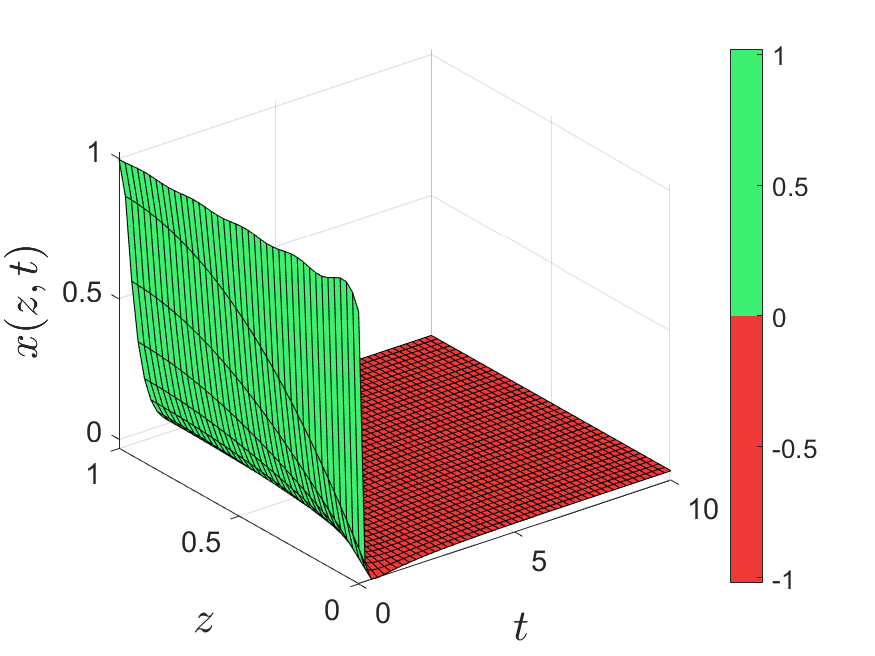}
        \caption{}
        \label{fig:counterexample_x}
    \end{subfigure}\\
    \begin{subfigure}[]{0.5\textwidth}
        \centering
        \includegraphics[width = 0.85\textwidth]{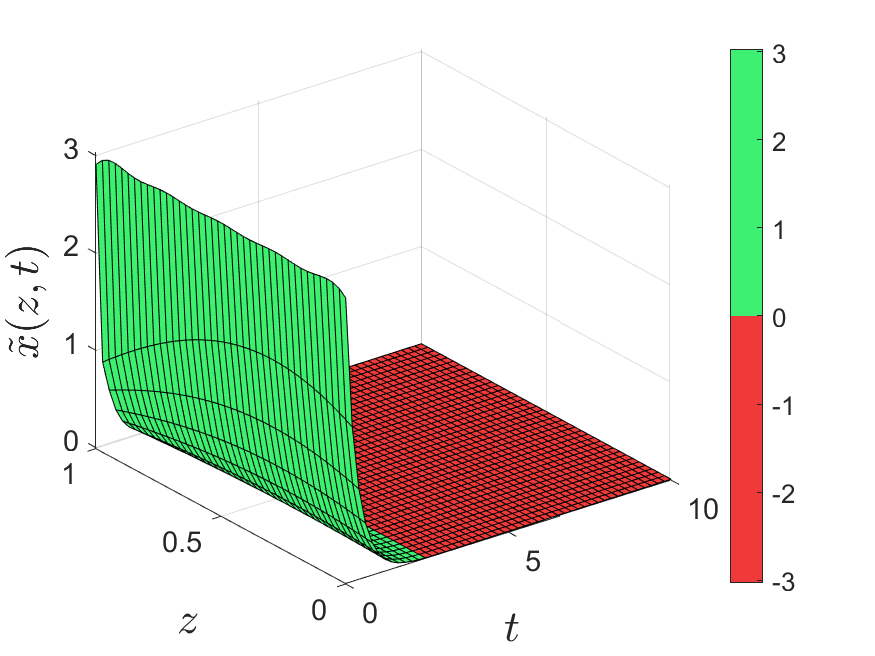}
        \caption{}
        \label{fig:counterexample_xtilde}
    \end{subfigure}%
    \caption{State (panel (a)) and estimated state (panel (b)) trajectories related to \eqref{eq:CI} ($L=1, D_a=1, \kappa=-1, \gamma=-2$, and $N=20$).}
    \label{fig:counterexample}
\end{figure}  
In this subsection, the figures have been obtained by using MATLAB R2024b and by discretizing the spatial domain $[0,L]$ into 40 points and using finite difference schemes. Observe that although both trajectories converge to zero, as expected from Proposition \ref{prop:stability_Atilde_kappagamma}, they become negative after some time. This illustrates Proposition \ref{prop:non_positivity_A_kappagamma}. \\
We now show that the initial estimation error $e_{0,a}:=\tilde{x}_{0,a}- x_{0,a}$
does actually not belong to $\mathcal{F}$ (and even not to $\mathcal{E}$) when $\kappa=-1$ and $\gamma-2$. To this end, let us fix $(\Phi_{i,1}\hspace{0.2cm}\Phi_{i,2})^{\top} := \Phi_i$ where $\Phi_{i,1},\Phi_{i,2}\in \mathrm{L}^2(0,L)$, and observe that 
\begin{equation}\label{eq:def_e0a}
    e_{0,a} = \sum_{i=1}^{N}\left\langle \bar{X},\Psi_i\right\rangle (\Phi_{i,2}-\Phi_{i,1}).
\end{equation}
A direct computation of $\Phi_{i,1}$ and $\Phi_{i,2}$ leads to $\Phi_{i,2}-\Phi_{i,1} = 0$, for all $i\in\mathbb{N}\setminus\{0\}$ such that $\lambda_i\in\sigma_p(A_{\kappa})$, and $\Phi_{i,2}-\Phi_{i,1} = \frac{1}{\|\Phi_i\|}(\frac{\gamma}{\kappa}-1)\cos(\sqrt{-\lambda_i/D_a} z)$, for all $i\in\mathbb{N}\setminus\{0\}$ such that $\lambda_i\in\sigma_p(SA_{\gamma}S^{-1})$. Since $\sigma_p(A_{\kappa\gamma}) = \sigma_p(A_{\kappa})\cup\sigma_p(SA_{\gamma}S^{-1})$, it follows from \eqref{eq:def_e0a} that $e_{0,a}=\sum_{i\in I_a}a_{i,a}\phi_i$, for some $I_a\subseteq\mathbb{N}\setminus\{0\}, |I_a|<\infty$, and $(a_{i,a})_{i\in I_a}\in \mathbb{R}^{|I_a|}$. When $\kappa=-1$ and $\gamma=-2$, as in Figure \ref{fig:counterexample}, a straightforward evaluation yields $e_{0,a}(0)\approx 2>0$. Consequently, $e_{0,a}\notin \mathcal{F}\cup(\mathrm{L}^2(0,L))_-$ but~$e_{0,a}\in \tilde{\mathcal{F}}$ and so $e_{0,a}\notin \mathcal{E}$.

As an alternative, take any $e_{0,b}\in \mathcal{F}$. By Theorem \ref{theorem:suitable_combili_eigenFunctions}, the mild solution of the $(x,e)$-dynamics \eqref{eq:system_x_e} associated to $(x_{0,a}\hspace{0.2cm}e_{0,b})^{\top}$ belongs to $\mathrm{L}^2(0,L)_+ \times \mathrm{L}^2(0,L)$ at any time. For the design, Algorithm \ref{alg:implementation_-Q*} is a simple way to implement any element of $\mathcal{F}$ for all $\gamma<0$. 
\begin{algorithm}
\caption{Implementation of $\mathcal{F}$}
\label{alg:implementation_-Q*}
\begin{algorithmic}[1]
\Require $D_a,L>0$ and $\gamma<0$
\Ensure $e_0\in \mathcal{F}$
\State Initialize $I\subseteq\mathbb{N}\setminus\{0\}, |I|<\infty$
\For{$i\in I$}
    \State Compute the $i$-th solution of $-\frac{\gamma}{\mu_i} = \tan(\mu_iL)$
    \State $\lambda_i \gets -D_a\mu_i^2$
    \State Initialize $c_i>0$ 
    \State $\phi_i \gets c_i\cos(\mu_i\,\cdot)$
\EndFor
\State $j\gets \mathrm{arg\,max}_{i \in I} \lambda_i$ and initialize $i^*\in I\setminus \{j\}$
\State Initialize $(a_i)_{i\in I\setminus\{j\}}\in\mathbb{R}^{|I|-1}$ such that
\begin{equation*}
    \left\{
        \begin{aligned}
        &a_i\geq0\,, \forall i \in I\setminus\{j\} \text{ s.t. } \lambda_i<\lambda_{i^*}\,,\\
        &a_{i^*}\in\mathbb{R}\,,\\
        &a_i\leq0\,, \forall i \in I\setminus\{j\} \text{ s.t. } \lambda_{i^*}<\lambda_i\,.
        \end{aligned}
    \right.
\end{equation*} 
\State Initialize $a_j\in\mathbb{R}$ such that $a_j\leq-\sum_{i\in I\setminus \{j\}}a_i\frac{c_i}{c_j}$ and $a_j\leq0$   \label{line:initialization_aj}
\State \Return $e_0 \gets \sum_{i\in I}a_i\phi_i$
\end{algorithmic}
\end{algorithm}
However, note that, when $\kappa=-1$ and $\gamma=-2$, $F(x_{0,a}) = e_{0,a}(0) \approx 2 > 0$ while $x_{0,a}\in D(F)$, since $(x_{0,a}\hspace{0.2cm}e_{0,a})^{\top}\in D(\tilde{A}_{-1,-2})$. 
Therefore, by Corollary \ref{cor:classical_solutions} combined with Theorem \ref{theorem:suitable_combili_eigenFunctions}, the mild solution of the $(x,e)$-dynamics \eqref{eq:system_x_e} associated to $(x_{0,a}\hspace{0.2cm}e_{0,b})^{\top}$ does not belong to $D(\tilde{A}_{\kappa\gamma})$ at some times.

Consider now the nonnegative initial state $x_{0,b}$ defined by $x_{0,b}(z) := -\kappa$ ($\approx x_{0,a}(z)$), $z\in[0,L]$, with $\kappa<0$. In that case, it is clear that $x_{0,b}\in D(F)$ and $F(x_{0,b}) = \kappa<0$.
By Corollary \ref{cor:classical_solutions}, for all $\gamma<0$, we can therefore design $e_{0}\in\mathcal{F}$ such that the classical solution $x$ of the $(x,e)$-dynamics related to such $(x_{0,b}\hspace{0.2cm}e_{0})^{\top}$ is well-defined and nonnegative, that is $x(\cdot,t)$ is nonnegative for all $t\geq0$. This construction is the subject of Algorithm \ref{alg:implementation_-Q*_domain}.
\begin{algorithm}
    \caption{Implementation of $e_0\in\mathcal{F}$ satisfying the boundary condition}
    \label{alg:implementation_-Q*_domain}
        \begin{algorithmic}[1]
        \Require $D_a,L>0$, $\kappa\in\mathbb{R}$, $\gamma<0$ and $x_0\in D_L$ such that $F(x_{0})\leq0$
        \Ensure $e_0\in \mathcal{F}$ and $(x_0\hspace{0.2cm}e_0)^{\top}\in D(\tilde{A}_{\kappa,\gamma})$
        \State $\tilde{e}_0\gets$Algorithm \ref{alg:implementation_-Q*}bis
        \Statex \Comment{Same as Algorithm \ref{alg:implementation_-Q*}, except that in line~\ref{line:initialization_aj} the inequality is replaced by $a_j<-\sum_{i\in I\setminus\{j\}}a_i\frac{c_i}{c_j}$}
        \State $c\gets\frac{F(x_0)}{\tilde{e}_0(0)}$
        \State\Return $e_0\gets c\tilde{e}_0$ 
        \end{algorithmic}
\end{algorithm}
The nonnegativity of the classical solution $x$ is illustrated in Figure \ref{fig:example_x} with the parameters $L=1, D_a=1, \kappa=-1, \gamma=-2$ and the initial estimation error $e_{0,c}\in \mathcal{F}$ corresponding to $I = \{1,2,3,4,5\}$, $c_i=1$, $i\in I$, and coefficients $(a_i)_{i\in I}$ given by the following table.  
\begin{center}
\centering
\begin{tabular}{c|ccccc}
$i$ & 1 & 2 & 3 & 4 & 5 \\
\hline
$a_i$ & -0.0760 & -0.6221 & -0.3510 & -0.1964 & 0.5132
\end{tabular}
\end{center}
Figure \ref{fig:example_e0} shows this initial estimation error, which has been designed using Algorithm \ref{alg:implementation_-Q*_domain}. Observe, in particular, that $e_{0,c}$ is neither nonpositive nor nonnegative. Finally, Figure \ref{fig:example_xtilde} depicts the estimated state trajectory associated with the $(x,e)$-dynamics and $(x_{0,b}\hspace{0.2cm}e_{0,c})^{\top}$. Here, the estimated state is not nonnegative at the beginning (i.e., at $t=0$), but it becomes nonnegative and remains so after a short transient period.
\begin{figure}[!b]
    \centering
    \begin{subfigure}[]{0.5\textwidth}
        \centering
        \includegraphics[width = 0.85\textwidth]{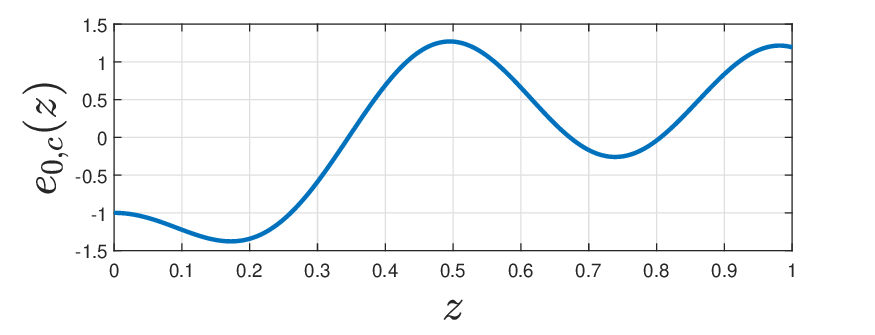}
        \caption{}
        \label{fig:example_e0}
    \end{subfigure}\\
    \begin{subfigure}[]{0.5\textwidth}
        \centering
        \includegraphics[width = 0.85\textwidth]{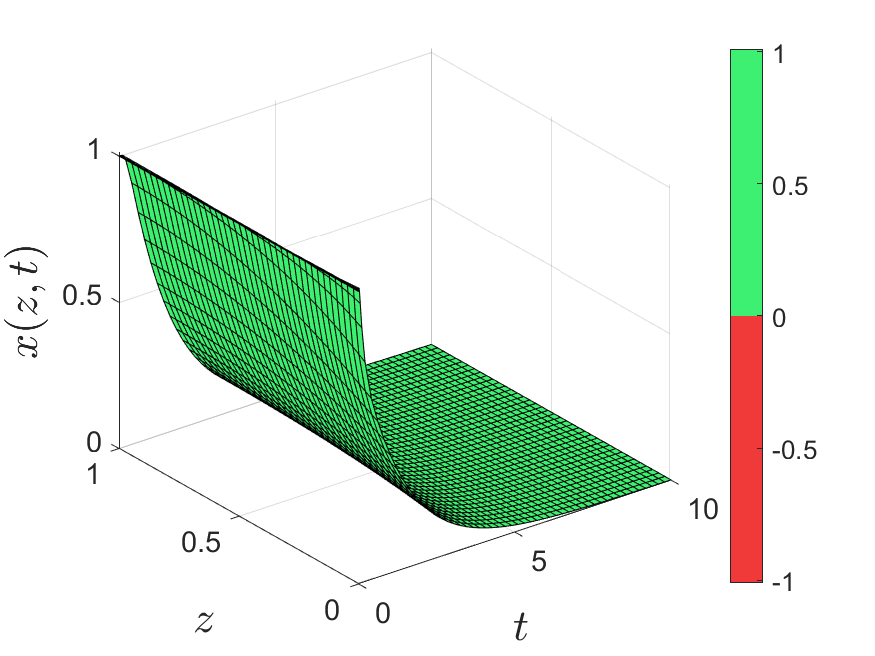}
        \caption{}
        \label{fig:example_x}
    \end{subfigure}\\
    \begin{subfigure}[]{0.5\textwidth}
        \centering
        \includegraphics[width = 0.85\textwidth]{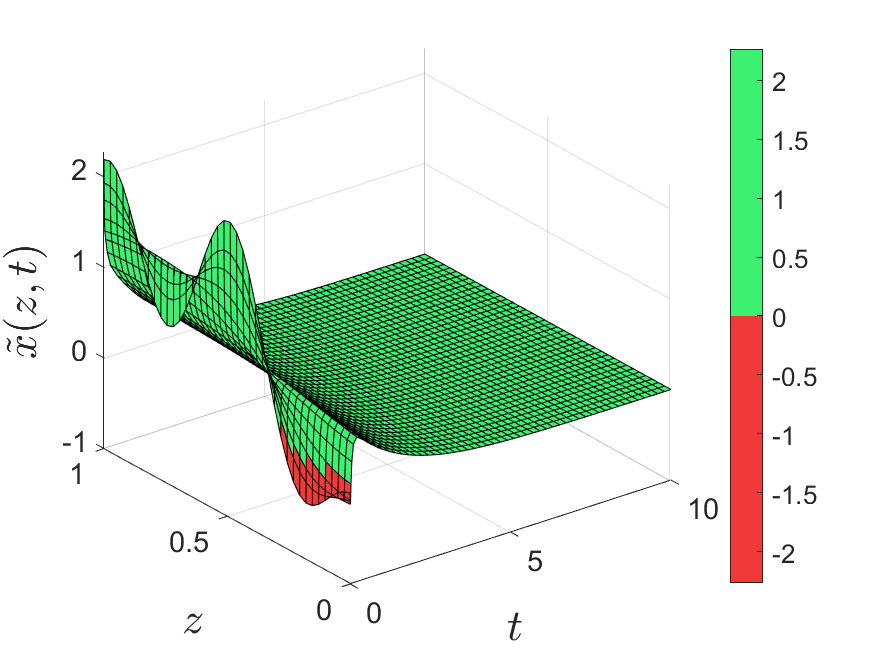}
        \caption{}
        \label{fig:example_xtilde}
    \end{subfigure}%
    \caption{State (panel (b)) and estimated state (panel (c)) trajectories related to the initial state $x_{0,b}$ and the initial estimation error $e_{0,c}$ (panel (a)) ($L=1, D_a=1, \kappa=-1, \gamma=-2$).}
    \label{fig:example}
\end{figure} 
To derive a nonnegative initial estimated state for $x_{0,b}$, it actually suffices to design $e_0$ like in Algorithm \ref{alg:implementation_-Q*_domain} with $e_0(z)\geq\kappa$, for all $z\in[0,L]$.

\section{Conclusion}\label{section:conclusion}
This paper dealt with a comprehensive analytical study of positive stabilization, positive state estimation, and observer-based positive stabilization for a one-dimensional boundary control diffusion system with point observation and control, together with its finite-dimensional spatial discretizations. Owing to the analytical tractability of the considered model, analytical and computable solutions were obtained for each design problem in both the continuous and discretized settings. In particular, optimal positively stabilizing state feedback laws and optimal positive observers were characterized through explicit feasibility conditions and closed-form optimal solutions. The two optimal design problems were interpreted as trade-offs between exploiting the desirable properties of low-gain state feedbacks and high-gain observers, respectively, while mitigating their inherent disadvantages and ensuring nonnegativity of the trajectories of interest. Furthermore, the observer-based positive stabilization problem was thoroughly investigated. It was shown that no considered observer-based state feedback associated with a positive observer can positively stabilize the considered system. To overcome this limitation, an alternative design strategy was proposed by identifying non trivial sets of initial estimation errors that guarantee the positivity of the real state trajectory while preserving implementability. The effectiveness of this proposed design strategy was demonstrated by numerical simulations. Throughout the paper, links between the studied system and its discretized version have been highlighted via convergence analysis and parallel arguments or results.

As an outlook for future research, the links established between the considered diffusion system and its approximation, the completeness of the overall study and the implementability of the solutions motivate an extension of the proposed study and methodology to more general classes of infinite-dimensional positive systems, including convection-diffusion-reaction systems subject to more general boundary conditions.

\begin{ack}                               
This research was conducted with the financial support of F.R.S-FNRS. Violaine Piengeon is a FNRS Research Fellow under the grant FC 58057. The scientific responsibility rests with its authors. The authors thank Anthony Hastir (UNamur, Belgium) for reviewing the manuscript and for insighful and helpul discussions.
\end{ack}

\appendix
\textbf{Appendix}  
\section{ Proof of Lemma \ref{lemma:feasability_practicality}: additional details:}
$\lambda$ is an eigenvalue of the matrix $A_{\kappa}^{(n)}$ if, and only if, $\lambda$ is a real zero of the function $g_{\kappa}$, or equivalently 
\begin{equation*}
        \kappa -\frac{p_2+\lambda}{p_1}-\frac{p_2^2}{p_1}\frac{[A^{(n)}-\lambda I]_{|n-2}}{[A^{(n)}-\lambda I]_{|n-1}} = 0 \; .
    \end{equation*}
Indeed, by applying the cofactor formula 
with respect to the first column of $A^{(n)}_{\kappa}-\lambda I$,  
one gets that $\lambda$ is an eigenvalue of $A^{(n)}_{\kappa}$ if, and only if, 
    \begin{equation}\label{eq:det_equal_0}
        (-p_2+p_1 \kappa -\lambda) [A^{(n)}-\lambda I]_{|n-1} -p_2^2 [A^{(n)}-\lambda I]_{|n-2} = 0 \; . 
   \end{equation}
Moreover, if \eqref{eq:det_equal_0} is satisfied, then $[A^{(n)}-\lambda I]_{|n-1} \neq 0$. Indeed, if this is not the case, then it follows from \eqref{eq:det_equal_0} that $[A^{(n)}-\lambda I]_{|n-2} = 0$. Since 
\begin{equation}\label{eq:recurrence_detF}
    [A^{(n)}-\lambda I]_{|i} = (-2p_2-\lambda) [A^{(n)}-\lambda I]_{|i-1}-p_2^2 [A^{(n)}-\lambda I]_{|i-2},
\end{equation}
for $i= 3, \cdots, n-1$.
it follows that $[A^{(n)}-\lambda I]_{|i}=0$ for $i=1,\hdots,n-1$, the case $i=1$ being clearly a contradiction.
\section{Proof of Theorem \ref{theorem:design_k1_opti}: additional details:}
The identity $[A^{(n)}]_{|n-2}{([A^{(n)}]_{|n-1})}^{-1} = -p_2^{-1}$ holds.
Indeed, by using \eqref{eq:recurrence_detF}, it can be easily shown by induction that, $[A^{(n)}]_{|i-1}{([A^{(n)}]_{|i})}^{-1} = -p_2^{-1}$, for $i=2, \cdots, n-1$.
\section{Proof of Lemma \ref{lemma:conv_sol}: additional details:}
        The sequence $(|u_n-v_n|)$ converges towards zero. Indeed, using the triangle inequality, \eqref{eq:p1 p2}, and the definition of binomial coefficients and factorials successively, for all $n\in\mathbb{N}\setminus\{0,1\}$,
        \begin{equation}\label{eq:bounds_un_vn}
            0\leq|u_n-v_n|\leq  \sum_{l=0}^{\infty} f_n(l),
        \end{equation}
        where $(f_n)_{n\in\mathbb{N}\setminus\{0,1\}}$ is the sequence of real-valued measurable functions given, for all $l\in\mathbb{N}$, by
        \begin{equation}\label{eq:def_fn(l)}
            f_n(l) := \left|1-g_n(l)\right|\frac{L^{2l+1}}{D_a^{l+\frac{1}{2}}}\frac{\alpha^{l+1}}{(2l+1)!}\mathbb{1}_{\{0,1,...,n-1\}}(l),
        \end{equation}
        with $g_n(l):= \frac{1}{(n-1)^{2l+1}}{\prod_{i=1}^{2l+1}(n-l-1+i)}$. Observe that the fundamental theorem on symmetric polynomials and the fact that $\sigma_0$ is equal to 1 yield that, for all $n\in\mathbb{N}\setminus\{0,1\}$, $l\in\{0,1,...,n-1\}$,
        \begin{align}
            g_n(l) 
            &= \frac{1}{(n-1)^{2l+1}}{\displaystyle\sum_{i=0}^{2l+1}\sigma_i\left(\left[-l+j\right]_{j=1}^{2l+1}\right)(n-1)^{2l+1-i}}\notag\\
            &=1+{\displaystyle\sum_{i=1}^{2l+1}\sigma_i\left(\left[-l+j\right]_{j=1}^{2l+1}\right)\frac{1}{(n-1)^{i}}}\,.\label{eq:def_gn(l)}
        \end{align}
        It is now easy to see that, for all $l\in\mathbb{N}$, 
        \begin{equation}\label{eq:conv_fn}
            f_n(l)\rightarrow 0 \quad \text{as}\quad n\rightarrow\infty.     
        \end{equation}
        Moreover, from the definition of elementary symmetric polynomials, it follows that, for all $l\in\mathbb{N}\setminus\{0\}$, $i\in\{1,2,...,2l+1\}$,
        \begin{align*}
            |\sigma_i\left(\left[-l+j\right]_{j=1}^{2l+1}\right)|&\leq\sum_{1\leq j_1<...<j_i\leq 2l+1}\prod_{k=1}^{i}|-l+j_k|\\
            &\leq \sum_{1\leq j_1<...<j_i\leq 2l+1}2 l^i\\
            &= 2 l^i \binom{2l+1}{i}.
        \end{align*}     
        Since $\frac{l}{n-1}\mathbb{1}_{\{0,1,...,n-1\}}(l)\leq1$ for all $n\in\mathbb{N}\setminus\{0,1\},\,l\in \mathbb{N}$, the following estimate 
        \begin{equation}\label{eq:bound_fn}
            f_n(l) \leq  2(2^{2l+1}-1)\frac{L^{2l+1}}{D_a^{l+\frac{1}{2}}}\frac{\alpha^{l+1}}{(2l+1)!} =: f(l),
        \end{equation}
        can be derived using the triangle inequality in \eqref{eq:def_fn(l)} with \eqref{eq:def_gn(l)}, and the binomial theorem for $2^{2l+1}$. By arguments based on d'Alembert's ratio tests, it is easy to show that
        \begin{equation}\label{eq:series_convergent}
            \sum_{l=0}^{\infty}f(l)<+\infty.
        \end{equation}
        From the Lebesgue's dominated convergence theorem applied to the sequence $(f_n)_{n\in\mathbb{N}\setminus\{0,1\}}$ where \eqref{eq:conv_fn}, \eqref{eq:bound_fn}, \eqref{eq:series_convergent} are satisfied, it follows that~$\lim_{n\rightarrow\infty}\sum_{l=0}^{\infty} f_n(l) = 0$. The conclusion follows by using \eqref{eq:bounds_un_vn}.

\bibliographystyle{plain}        
\bibliography{autosam}           

@book{Curtain_Zwart2020,
  title     = {Introduction to Infinite-Dimensional Systems Theory. A State-Space Approach},
  author    = {R. Curtain and H. Zwart},
  series    = {Texts in Applied Mathematics},
  publisher = {Springer New York, NY},
  year      = {2020},
  url       = {https://doi.org/10.1007/978-1-0716-0590-5},
  Language  = {English}
}

@article{Dehaye2016,
    title = {Parameterization of positively stabilizing feedbacks for single-input positive systems},
    journal = {Systems \& Control Letters},
    volume = {98},
    pages = {57-64},
    year = {2016},
    issn = {0167-6911},
    doi = {https://doi.org/10.1016/j.sysconle.2016.10.010},
    url = {https://www.sciencedirect.com/science/article/pii/S0167691116301657},
    author = {J. Dehaye and J.J. Winkin},
}

@book{VandeWouwer2014,
  author       = {A. Vande Wouwer and P. Saucez and C. Vilas},
  title        = {Simulation of ODE/PDE Models with MATLAB®, OCTAVE and SCILAB. Scientific and Engineering Applications},
  year         = {2014},
  publisher    = {Springer Cham},
  address      = {Cham, Switzerland},
  url          = {https://doi.org/10.1007/978-3-319-06790-2},         
  Language     = {English}
}

@book{Crank1975,
  title        ={The Mathematics of Diffusion},
  author       ={J. Crank},
  isbn         ={9780198534112},
  lccn         ={75318921},
  series       ={Oxford science publications},
  url          ={https://books.google.be/books?id=eHANhZwVouYC},
  year         ={1975},
  publisher    ={Clarendon Press},
  Language     = {English}
}

@article{Laabissi2001,
title = {Trajectory analysis of nonisothermal tubular reactor nonlinear models},
journal = {Systems \& Control Letters},
volume = {42},
number = {3},
pages = {169-184},
year = {2001},
issn = {0167-6911},
doi = {https://doi.org/10.1016/S0167-6911(00)00088-8},
url = {https://www.sciencedirect.com/science/article/pii/S0167691100000888},
author = {M. Laabissi and M.E. Achhab and J.J. Winkin and D. Dochain},
}

@article{Noschese2013,
    author = {S. Noschese and L. Pasquini and L. Reichel},
    title = {Tridiagonal Toeplitz matrices: properties and novel applications},
    journal = {Numerical Linear Algebra with Applications},
    volume = {20},
    number = {2},
    pages = {302-326},
    year = {2013}
}

@article{Rami2011,
author = {M.A. Rami and F. Tadeo and U. Helmke},
title = {Positive observers for linear positive systems, and their implications},
journal = {International Journal of Control},
volume = {84},
number = {4},
pages = {716-725},
year = {2011},
publisher = {Taylor \& Francis},
}

@book{Batkai2017,
  author    = {A. B{\'a}tkai and M. Kramar Fijav{\v{z}} and Abdelaziz Rhandi},
  title     = {Positive Operator Semigroups: From Finite to Infinite Dimensions},
  series    = {Operator Theory: Advances and Applications},
  year      = {2017},
  publisher = {Birkh{\"a}user Cham},
  doi       = {10.1007/978-3-319-42813-0},
  edition   = {1},
  pages     = {xviii+364}
}

@article{Hastir2023,
title = {A frequency domain approach to Kalman filtering on Hilbert spaces: Application to Sturm–Liouville systems with pointwise measurement},
journal = {Annual Reviews in Control},
volume = {55},
pages = {379-389},
year = {2023},
issn = {1367-5788},
doi = {https://doi.org/10.1016/j.arcontrol.2023.02.003},
author = {A. Hastir and J. Mohet and J.J. Winkin}
}

@book {Farina2000,
    AUTHOR = {L. Farina and S. Rinaldi},
     TITLE = {Positive linear systems: Theory and applications},
 PUBLISHER = {Wiley-Interscience, New York},
      YEAR = {2000},
     PAGES = {x+305},
      ISBN = {0-471-38456-9},
   MRCLASS = {93-02 (93A30 93B25 93C05)},
  MRNUMBER = {1784150 (2001g:93001)},
MRREVIEWER = {T. Kaczorek},
       DOI = {10.1002/9781118033029},
       URL = {http://dx.doi.org/10.1002/9781118033029},
}

@book{Berman1989,
  title={Nonnegative Matrices in Dynamic Systems},
  author={A. Berman and M. Neumann and R.J. Stern},
  isbn={9780471620747},
  lccn={88339344},
  series={Pure and Applied Mathematics: A Wiley Series of Texts, Monographs and Tracts},
  url={https://books.google.be/books?id=VQrvAAAAMAAJ},
  year={1989},
  publisher={Wiley}
}

@article{Lin2009LowGA,
  title={Low gain and low-and-high gain feedback: A review and some recent results},
  author={Z. Lin},
  journal={2009 Chinese Control and Decision Conference},
  year={2009},
  pages={lii-lxi},
  url={https://api.semanticscholar.org/CorpusID:36717728}
}

@book{Rockafellar1970,
  title={Convex Analysis},
  author={R.T. Rockafellar},
  isbn={9780691080697},
  lccn={68056318},
  series={Princeton landmarks in mathematics and physics},
  url={https://books.google.be/books?id=OI4Ph2dXXhsC},
  year={1970},
  publisher={Princeton University Press}
}

@INPROCEEDINGS{Khalil2008,
  author={H.K. Khalil},
  booktitle={2008 International Conference on Control, Automation and Systems}, 
  title={High-gain observers in nonlinear feedback control}, 
  year={2008},
  volume={},
  number={},
  pages={xlvii-lvii},
}

@article{Kitsos2021,
title = {High-gain observer for 3$\times$3 linear heterodirectional hyperbolic systems},
journal = {Automatica},
volume = {129},
pages = {109607},
year = {2021},
issn = {0005-1098},
author = {C. Kitsos and G. Besançon and C. Prieur},
}

@inproceedings{PiengeonWinkin2026,
    author = {V. Piengeon and J.J. Winkin},
    title = {Design of an optimal positive state observer
for a diffusion system with point observation},
    booktitle = {Proceedings of the 27th MTNS symposium},
    month = {17-21 August},
    year = {2026},
    adress = {Cambridge, UK}
}

@book{Pazy1983,
  author    = {A. Pazy},
  title     = {Semigroups of Linear Operators and Applications to Partial Differential Equations},
  publisher = {Springer},
  address   = {Berlin},
  year      = {1983}
}

@article{Binid2021,
    author = {A. Binid and M.E. Achhab and M. Laabissi},
    title = {Positive observers for linear positive systems in a Hilbert lattice space},
    journal = {IMA Journal of Mathematical Control and Information},
    volume = {38},
    number = {1},
    pages = {143-158},
    year = {2021},
    month = {03},
}

@techreport{Arrow1989,
  author      = {K.J. Arrow},
  title       = {A ``Dynamic'' Proof of the Frobenius-Perron Theorem for Metzler Matrices},
  institution = {Institute for Mathematical Studies in the Social Sciences, Stanford University},
  series      = {Economics Series},
  number      = {542},
  address     = {Stanford, California},
  year        = {1989}
}

@book{Haddad2010,
  title={Nonnegative and compartmental dynamical systems},
  author={W.M. Haddad and V. Chellaboina and Q. Hui},
  year={2010},
  publisher={Princeton University Press}
}

@book{Berman1994,
  title={Nonnegative matrices in the mathematical sciences},
  author={A. Berman and R.J. Plemmons},
  year={1994},
  publisher={SIAM}
}

@book{Arendt1986,
  title={One-parameter semigroups of positive operators},
  author={W. Arendt and A. Grabosch and G. Greiner and U. Moustakas and R. Nagel and U. Schlotterbeck and U. Groh and H.P. Lotz and F. Neubrander},
  volume={1184},
  year={1986},
  publisher={Springer}
}

@book{Smith1995,
  title={Monotone dynamical systems: an introduction to the theory of competitive and cooperative systems: an introduction to the theory of competitive and cooperative systems},
  author={H.L. Smith},
  number={41},
  year={1995},
  publisher={American Mathematical Soc.}
}

@article{Rantzer2021,
  title={Scalable control of positive systems},
  author={A. Rantzer and M.E. Valcher},
  journal={Annual Review of Control, Robotics, and Autonomous Systems},
  volume={4},
  number={1},
  pages={319--341},
  year={2021},
  publisher={Annual Reviews}
}

@INPROCEEDINGS{Abouzaid2010,
  author={B. Abouzaid and J.J. Winkin and V. Wertz},
  booktitle={49th IEEE Conference on Decision and Control (CDC)}, 
  title={Positive stabilization of infinite-dimensional linear systems}, 
  year={2010},
  volume={},
  number={},
  pages={845-850},
  doi={10.1109/CDC.2010.5717896}
  }

@article{Yupanqui2021,
  title={A concise review of state estimation techniques for partial differential equation systems},
  author={I.F. Yupanqui Tello and A. Vande Wouwer and D. Coutinho},
  journal={Mathematics},
  volume={9},
  number={24},
  pages={3180},
  year={2021},
  publisher={MDPI}
}

@book{Sontag2013,
  title={Mathematical control theory: deterministic finite dimensional systems},
  author={E.D. Sontag},
  volume={6},
  year={2013},
  publisher={Springer Science \& Business Media}
}

@InProceedings{Laabissi2003,
    author="M. Laabissi and M.E. Achhab and J.J. Winkin and D. Dochain",
    editor="L. Benvenuti and A. De Santis and L. Farina",
    title="Positivity and Invariance Properties of Nonisothermal Tubular Reactor Nonlinear Models",
    booktitle="Positive Systems",
    year="2003",
    publisher="Springer Berlin Heidelberg",
    address="Berlin, Heidelberg",
    pages="159--166"
}

@inproceedings{Achhab2017,
    author="M.E. Achhab and J.J. Winkin",
    editor="F. Cacace and L. Farina and R. Setola and A. Germani",
    title="Positive Stabilization of a Class of Infinite-Dimensional Positive Systems",
    bookTitle="Positive Systems : Theory and Applications (POSTA 2016) Rome, Italy, September 14-16, 2016",
    year="2017",
    publisher="Springer International Publishing",
    address="Cham",
    pages="191--200",
    doi="10.1007/978-3-319-54211-9_15",
}

@article{Abouzaid2022,
title = {Locally positive stabilization of infinite-dimensional linear systems by state feedback},
journal = {European Journal of Control},
volume = {63},
pages = {1-13},
year = {2022},
issn = {0947-3580},
doi = {https://doi.org/10.1016/j.ejcon.2021.07.006},
url = {https://www.sciencedirect.com/science/article/pii/S0947358021000972},
author = {B. Abouzaïd and M.E. Achhab and J.N. Dehaye and A. Hastir and J.J. Winkin}
}

@phdthesis{Wintermayr2019,
  title={Positivity in perturbation theory and infinite-dimensional systems},
  author={J. Wintermayr},
  year={2019},
  school={Universit{\"a}tsbibliothek Wuppertal}
}

@phdthesis{Beauthier2011,
  title={The LQ-Optimal Control Problem for Invariant Linear Systems},
  author={C. Beauthier},
  year={2011},
  school={University of Namur, Belgium}
}

@article{Anita2003,
    title = {Positive stabilization of a parabolic equation by controls localized on a curve},
    journal = {Journal of Mathematical Analysis and Applications},
    volume = {286},
    number = {1},
    pages = {107-115},
    year = {2003},
    issn = {0022-247X},
    doi = {https://doi.org/10.1016/S0022-247X(03)00454-2},
    url = {https://www.sciencedirect.com/science/article/pii/S0022247X03004542},
    author = {S. Aniţa and J.P. Raymond}
}

@book{Vinberg2003,
  title={A course in algebra},
  author={E.B. Vinberg},
  volume={56},
  year={2003},
  publisher={American Mathematical Soc.}
}

\end{document}